\newtheorem{theorem}{Theorem}[section]
\newtheorem{proposition}[theorem]{Proposition}
\newtheorem{lemma}[theorem]{Lemma}
\newtheorem{corollary}[theorem]{Corollary}
\theoremstyle{definition}
\newtheorem{remark}[theorem]{Remark}
\newtheorem{example}[theorem]{Example}
\newtheorem{definition}[theorem]{Definition}
\newtheorem{maintheorem}{Theorem}
\DeclareMathOperator{\conv}{conv}
\newcommand{\boxs}{0.8cm}
\newcommand{\labA}{\operatorname{lab}^A}
\newcommand{\labL}{\operatorname{lab}^{L}}
\newcommand{\RR}{\mathbb{R}}
\newcommand{\FF}{\mathbb{F}}
\title{Lattice Path Delta Matroids}
\author{Douglas M. Chen \and Mario Sanchez \and John Veliz \and Zhiyan Ying}
\address{Johns Hopkins University}
\email{dchen101@jhu.edu}
\address{Cornell University}
\email{ms2962@cornell.edu}
\address{Cornell University}
\email{jv424@cornell.edu}
\address{University of California, Los Angeles}
\email{kying0607@ucla.edu}
\date{\today}
\begin{document}

\begin{abstract}
    We initiate the study of a type $C_n$ generalization of the lattice path matroids defined by Bonin, de Mier, and Noy. These are delta matroids whose feasible sets are in bijection with lattice paths which are symmetric along the main diagonal. We describe deletion, contraction, enveloping matroids, and the homogeneous components of these delta matroids in terms of the combinatorics of symmetric lattice paths.
    
    In the second half, we study the convex geometry of the feasible polytopes of lattice path delta matroids. We show that these polytopes decompose into the feasible polytope of special lattice paths corresponding to intervals in the type $C_n$ Gale order whose related Richardson variety is a toric variety. Further, these special polytopes inherit a unimodular triangulation from Stanley's Eulerian triangulation of the hypercube. As a consequence, the volume of these polytopes is the proportion of permutations with specific ascent sets.
\end{abstract}
\maketitle

\tableofcontents
\section{Introduction}

Lattice path matroids are a special class of matroids first introduced by Bonin, de Mier, and Noy \cite{Bonin2003}. These matroids are parameterized by a pair of lattice paths $p$ and $q$ consisting of only eastward and northward unit steps from $(0,0)$ to a point $(a,b)$ such that $p$ is weakly below $q$. The bases of a lattice path matroid $M[p,q]$ are in bijection with the lattice paths weakly above $p$ and weakly below $q$. Their main utility is that many of their properties can be described in terms of the combinatorics of lattice paths. This includes deletion, contraction, duality, activity, calculations of Tutte polynomials, volumes of their matroid polytopes, integer lattice point enumerations, facial structures, and many more \cite{Bonin2003,Bonin2006,Bonin2010,Bid12,knauer2018lattice}.

In the general trend of Coxeter/Lie combinatorics, many commonly studied combinatorial objects are related to the type $A_n$ root system, representation theory, or corresponding geometry. Through this connection, one finds natural analogues of the usual combinatorial objects for arbitrary root systems. For matroids, these analogues are known as Coxeter matroids \cite{Borovik2003}. Among this general class of objects, the most studied are the Lagrangian matroids, which are the generalization corresponding to the type $C_n$ cominuscule parabolic quotient. Lagrangian matroids have been previously studied in optimization under the name of delta matroids.

The goal of this paper is to study the delta matroid generalization of lattice path delta matroids. To do this, we define a \textbf{symmetric lattice path} as a lattice path from $(0,0)$ to $(n,n)$ that is symmetric along the line $y = n - x$. We say $p \leq q$ if $p$ is always weakly below $q$. A \textbf{lattice path delta matroid} of a pair of symmetric lattice paths $p \leq q$ is a delta matroid whose feasible sets are in bijection with symmetric lattice paths $r$ with $p \leq r \leq q$. The bijection sends each lattice path $p$ to a certain subset of labels $\labL(p)$ in analogy with the ordinary case. Under this bijection, lattice path delta matroids are indexed by pairs of subsets $S,T \subseteq [n]$ such that $S \leq T$ in the type $C_n$ Gale order. 

We begin by defining lattice path delta matroids in Section \ref{sec: Lattice Path Model} and constructing an example which is a type $C_n$ version of Ardila's Catalan matroid \cite{Ardila2003}. We will also discuss the connection with algebraic geometry through representability and moment map images of Richardson varieties in the Lagrangian Grassmannian.

In Section \ref{Sec: First Properties}, we study first properties and operations of lattice path delta matroids. We give concrete descriptions of the deletion, contraction, and duality of lattice path delta matroids in terms of the subsets which imply the following:

\begin{proposition}
    The set of lattice path delta matroids is closed under delta matroid deletion, contraction, and duality.
\end{proposition}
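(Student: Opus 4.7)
The plan is to establish closure under each of the three operations by tracking their effect on the pair of bounding symmetric lattice paths $(p, q)$, or equivalently on the pair of labels $(S, T) = (\labL(p), \labL(q))$ in the type $C_n$ Gale order. In each case the goal is to exhibit new symmetric paths $p', q'$ with $p' \leq q'$ such that the operation applied to $M[p, q]$ equals $M[p', q']$.

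First I would handle duality. The dual of a delta matroid on $[n]$ is obtained by complementing each feasible set in the ground set. Complementation is an order-reversing involution on $2^{[n]}$ that restricts to an involution on the type $C_n$ Gale order, and under the bijection $\labL$ it corresponds to a natural involution on symmetric lattice paths that preserves symmetry about the line $y = n - x$. Consequently the interval $[p, q]$ is carried to another interval $[q^*, p^*]$ of symmetric paths, giving $M[p, q]^* = M[q^*, p^*]$.

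Next I would handle deletion $M \setminus i$. Its feasible sets are the feasible sets of $M$ avoiding $i$, viewed on the reduced ground set $[n] \setminus \{i\}$, when such sets exist; otherwise one removes $i$ from each feasible set. The main task is to show that the subfamily of $[S, T]$ consisting of subsets avoiding $i$ is itself an interval in the reduced $C_n$ Gale order, and to identify its minimum $S'$ and maximum $T'$ by modifying $S$ and $T$ locally near position $i$. Translating back through $\labL$, this amounts to locally adjusting $p$ and $q$ near their $i$-th step (together with the mirror step forced by the symmetry) to produce new bounding symmetric paths. Contraction then follows immediately from the identity $M / i = (M^* \setminus i)^*$, combining closure under the previous two cases.

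The main obstacle is the deletion step, since compatibility of intervals with element removal in the $C_n$ Gale order is not automatic the way it is in the Boolean lattice. The key technical point is to show that the extremes of $\{F \in [S, T] : i \notin F\}$ are realized by canonical subsets whose labels correspond to genuine symmetric lattice paths. This reduces to a short case analysis based on whether $i \in S$, $i \in T \setminus S$, or $i \notin T$, together with an analysis of how the mirror symmetry of $p$ and $q$ constrains the allowable local modification at position $i$.
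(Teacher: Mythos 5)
Your proposal takes essentially the same route as the paper: duality by complementation (an order-reversing involution on the type $C_n$ Gale order), deletion by locally modifying $S$ and $T$ to new extremes $A$ and $B$, and contraction via the identity $\Delta/\ell = (\Delta^* \setminus \ell)^*$. The part you flag as the main obstacle is indeed where the real work lies: the paper's deletion argument spends most of its effort verifying that $A \leq B$ in the type $C_n$ Gale order, using the hypothesis that $\ell$ is not a coloop to derive the strict inequalities $|S_{\geq \ell}| < |T_{\geq \ell}|$ (when $\ell \in T$) and $|S_{\geq \ell+1}| < |T_{\geq \ell+1}|$ (when $\ell \in S$), which you would need to carry out to complete your case analysis.
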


We also study various ways of obtaining ordinary matroids from delta matroids. In particular, we describe the homogeneous components of lattice path delta matroids in terms of the corresponding symmetric skew-diagram and obtain natural enveloping matroids for lattice path delta matroids $\Delta[p,q]$ in the sense of \cite{EFLS22}.

In the second half of the paper, we focus on the convex geometry of delta matroids. For a delta matroid $\Delta$, the \textbf{feasible polytope} of $\Delta$ is
 \[P(\Delta) = \operatorname{conv}(e_B \; \lvert \; B \in \mathcal{F}(\Delta)) \subseteq \RR^n. \]

In Section \ref{Sec: Polytopes}, we study the dimension, intersections, and faces of these polytopes. The main result of the section is the following hyperplane description of $P(\Delta[S,T])$:
\begin{maintheorem}\label{mainthm: hyperplane description}
    Let $S$ and $T$ be subsets of $[n]$ such that $S \leq T$ in the Gale order of type $C_n$. Then, $x \in P(\Delta[S,T])$ if and only if
     \[|S_{\geq i}| \leq x_i + \cdots + x_n \leq |T_{\geq i}| \]
     and
     \[0 \leq x_i \leq 1\]
    for all $i \in [n]$, where $S_{\geq i }\coloneqq \{x \in S \; \lvert \; x \geq i\}$.
\end{maintheorem}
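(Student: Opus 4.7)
The plan is to establish the equality $P(\Delta[S,T]) = Q$ with the polytope $Q$ defined by the stated inequalities via two opposite containments. The key input from earlier sections is the correspondence between the lattice-path order on symmetric paths and the type $C_n$ Gale order on their labels, together with the tail-count description $A \leq A'$ if and only if $|A_{\geq i}| \leq |A'_{\geq i}|$ for every $i \in [n]$. Under this dictionary, $B \in \mathcal{F}(\Delta[S,T])$ is equivalent to $|S_{\geq i}| \leq |B_{\geq i}| \leq |T_{\geq i}|$ for all $i$. The containment $P(\Delta[S,T]) \subseteq Q$ then drops out on vertices: for $B$ feasible, $e_B$ satisfies $\sum_{j \geq i}(e_B)_j = |B_{\geq i}|$, so the tail-sum inequalities hold, and the box constraints are automatic.

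For the reverse containment $Q \subseteq P(\Delta[S,T])$, I would argue that every vertex of $Q$ is already a vertex of $P(\Delta[S,T])$. Each inequality defining $Q$ has coefficient vector either $\pm e_i$ (box constraint) or a tail indicator of the form $(0, \ldots, 0, 1, \ldots, 1)$ (tail-sum constraint). The matrix of tail indicators is a consecutive-ones matrix, which is classically totally unimodular, and adjoining the rows $\pm e_i$ preserves total unimodularity. Since all right-hand sides are integers, every vertex of $Q$ must be an integer vector, and the box constraints then force each such vertex to be some $e_A$ with $A \subseteq [n]$. Substituting, the tail-sum inequalities on $e_A$ read $|S_{\geq i}| \leq |A_{\geq i}| \leq |T_{\geq i}|$ for all $i$, that is $S \leq A \leq T$ in the $C_n$ Gale order, so $A \in \mathcal{F}(\Delta[S,T])$ and $e_A$ is a vertex of $P(\Delta[S,T])$. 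Taking convex hulls yields $Q \subseteq P(\Delta[S,T])$.

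The main obstacle I anticipate is the clean translation between the lattice-path side and the inequality side: I need to make sure that the Gale-order characterization of feasibility, and particularly its tail-count form, is firmly in hand from the earlier sections, since that is what converts the combinatorial condition $p \leq r \leq q$ into the arithmetic condition $|S_{\geq i}| \leq |B_{\geq i}| \leq |T_{\geq i}|$. Once that dictionary is available, the total unimodularity of interval matrices is a classical fact (Hoffman--Kruskal), and both containments follow without further combinatorial work. An alternative would be a direct induction on $n$ or on $|T| - |S|$ using the deletion/contraction descriptions from Section~\ref{Sec: First Properties}, but the total-unimodularity route is cleaner and avoids case analysis.
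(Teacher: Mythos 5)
Your proof is correct, and it takes a genuinely different route from the paper. The paper proves $C(S,T)\subseteq P(\Delta[S,T])$ by induction on $n$, first establishing Lemma~\ref{lem: faces of C(S,T)} (that the face $\{x_\ell=0\}$ of $C(S,T)$ matches $C(A,B)$ for the deletion $\Delta[S,T]\backslash\ell$), and then pushing an interior point of $C(S,T)$ onto a face via a two-case argument according to whether $m\leq 1-M$ or $m\geq 1-M$, where $m$ and $M$ are the extreme non-integral coordinates. This is somewhat delicate: the Remark following Theorem~\ref{thm: hyperplane description} notes that Bidkhori's analogous type $A$ proof is incomplete precisely because it only treats one of the two cases. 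Your total unimodularity argument sidesteps all of this. The tail-sum coefficient rows $\sum_{j\geq i}x_j$ form an interval (consecutive-ones) matrix, which is TU; and it is a standard fact that if $A$ is TU then the stacked matrix with rows $A$, $-A$, $I$, $-I$ remains TU (any square submatrix containing a row and its negation is singular, and otherwise one reduces to a submatrix of $\begin{pmatrix}A\\I\end{pmatrix}$ after sign changes). With integer right-hand sides, Hoffman--Kruskal gives that every vertex of the polytope $Q$ is integral, the box constraints force it to be a $0$-$1$ vector $e_A$, and Lemma~\ref{lem: Gale order and truncated sets} converts the tail inequalities into $S\leq A\leq T$, so $e_A$ is already a vertex of $P(\Delta[S,T])$. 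The paper's argument has the advantage of being self-contained and of exhibiting the face structure explicitly (material that is used again in Propositions~\ref{prop: intersections of LPDM are LPDM} and \ref{prop: face are again lattice path delta matroids}); your argument is shorter, avoids the case analysis that tripped up the cited predecessor, and generalizes immediately. One small point worth spelling out in a written version: the two-sided tail constraints contribute both the rows $\sum_{j\geq i}x_j$ and their negations to the constraint matrix, not just the $\pm e_i$ box rows, so the TU-preservation step should be stated for the full stacked matrix as above.
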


In Section \ref{Sec: Snake Paths and Triangulations}, we study a special class of lattice path delta matroids $\Delta[p,q]$ where the skew-diagram bounded by $p$ and $q$ does not contain a $2 \times 2$ box. We refer to these intervals as \textbf{symmetric snake paths}. We say that an interval $[p,q]$ is \textbf{linked} if $p$ and $q$ only meet at $(0,0)$ and $(n,n)$.

Algebro-geometrically, we show that these are exactly the lattice path delta matroids whose corresponding Richardson variety in $\operatorname{LGr}_{n,2n}(\FF)$ is a toric variety. Combinatorially, we give a full description in terms of subsets for the linked symmetric snake paths.

\begin{proposition}
    Let $[p,q]$ be a linked symmetric snake path. Then, $\Delta[p,q] = \Delta[S, S \cup \{n\}]$ for some subset $S \subseteq [n-1]$ and every such subset appears in this way.
\end{proposition}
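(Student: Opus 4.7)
The plan is to prove both directions by analyzing the ribbon structure of a linked snake and then translating this structure into the label bijection $\labL$. Given a linked symmetric snake $[p,q]$, the linked condition together with the no-$2\times 2$ condition forces the skew region to be a border strip (ribbon) from the cell $(0,0)$ to the cell $(n-1,n-1)$, consisting of a monotone sequence of $2n-1$ cells $c_1,\ldots,c_{2n-1}$ with each $c_{k+1}$ either east- or north-adjacent to $c_k$. From this ribbon one recovers $p_k = c_k + (1,0)$ and $q_k = c_k + (0,1)$ for $k \in [1, 2n-1]$, which immediately yields $p_k - p_{k-1} = c_k - c_{k-1} = q_k - q_{k-1}$ for $k \in [2, 2n-1]$. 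So $p$ and $q$ take the same step at every interior position, while at the endpoints $p$'s step $1$ is east and step $2n$ is north, and $q$'s step $1$ is north and step $2n$ is east.

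From this rigid step structure the translation to $\labL$ is direct. Because $p$ and $q$ agree on the $2n-2$ interior steps, their images under $\labL$ agree on $[n-1]$. The single mismatch at steps $1$ and $2n$---which are interchanged under the symmetry $y = n - x$---contributes exactly the label $n$: since $q$'s step $2n$ is east (and $p$'s is north), one reads off $n \in \labL(q)$ and $n \notin \labL(p)$. Hence $\labL(p) \subseteq [n-1]$ and $\labL(q) = \labL(p) \cup \{n\}$, which is the first direction of the proposition.

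For the converse I would fix $S \subseteq [n-1]$, take the unique symmetric paths $p$ and $q$ with $\labL(p) = S$ and $\labL(q) = S \cup \{n\}$, and read the step structure back off the labels: $p$ and $q$ agree on steps $2,\ldots,2n-1$, with $p$ beginning east and ending north and $q$ beginning north and ending east. Then $q_k - p_k = (-1,1)$ for every $k \in [1, 2n-1]$, so $p$ and $q$ never meet in the interior, establishing the linked condition. Moreover the cells $c_k \coloneqq p_k - (1,0)$ form a monotone sequence from $(0,0)$ to $(n-1,n-1)$ in which each transition is east or north; this automatically gives a ribbon with no $2\times 2$ block, establishing the snake condition.

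The principal technical delicacy is pinning down that the single mismatch between $\labL(p)$ and $\labL(q)$ corresponds to the specific index $n$ rather than to some other index; this identification uses the labeling conventions of $\labL$ and the type $C_n$ Gale order set up in Section \ref{sec: Lattice Path Model}. Once those conventions are in hand, both halves of the proof reduce to the direct geometric computations sketched above.
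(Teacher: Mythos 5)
Your proof is correct, and it takes a noticeably different route from the paper's. The paper establishes this proposition by combining two results (Propositions \ref{prop: subset description for linked toric intervals} and \ref{prop: toric if and only if snake path}): it first characterizes linked toric intervals in subset language, using the rank function $\ell(S)=\sum_{i\in S} i$, the dimension formula (Corollary \ref{cor: dimension formula subsets}), and Hall's marriage theorem, and then shows that a linked snake path is the same as a linked toric interval by observing that $|T_{\geq i}|-|S_{\geq i}|$ counts the cells of the skew diagram whose interiors meet the anti-diagonal $y=n+i-1-x$, so that linkedness forces at least one per diagonal and the no-$2\times 2$ condition forces at most one. Your argument instead parametrizes the ribbon cells explicitly as $c_1,\ldots,c_{2n-1}$ and matches them to lattice points of the bounding paths via $p_k=c_k+(1,0)$, $q_k=c_k+(0,1)$, deducing directly that $p$ and $q$ take the same step at positions $2,\ldots,2n-1$ and differ only at positions $1$ and $2n$; the converse inverts this by showing $q_k-p_k=(-1,1)$ for all interior $k$. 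This is more self-contained: it avoids the rank function, Hall's theorem, and the toric-interval machinery entirely, at the cost of losing the algebro-geometric interpretation (Richardson variety being toric) that the paper's route also delivers. Both arguments ultimately rest on the same geometric fact — one cell per anti-diagonal — but your packaging is more elementary. One small notational slip: you write $\labL(p)=S$, but $\labL(p)$ is an admissible subset of $[\pm n]$ of size $n$; you mean $\labL(p)\cap[n]=S$, which is what the rest of your argument actually uses.
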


Our main result on symmetric snake paths is that their feasible polytopes admit a natural unimodular triangulation coming from Stanley's Eulerian triangulation of the hypercube \cite{Sta77}. Stanley's triangulation consists of $n!$ simplices naturally indexed by permutations $w \in S_n$ with the special property that the simplices indexed by permutations with $k$ ascents give a triangulation for the hypersimplex $\Delta_{k,n} \subseteq [0,1]^n$. We prove an extension of this result when we fix a particular ascent set. Recall that $i$ is an ascent of $\pi$ if $\pi(i) < \pi(i+1)$.

\begin{maintheorem}\label{mainthm: snake path triangulation}
    The feasible polytope of a lattice path delta matroid $\Delta[p,q] = \Delta[S,S \cup \{n\}]$ of a linked symmetric snake path $[p,q]$ has a unimodular triangulation where the maximal simplices are in bijection with permutations with ascent set $S$.
\end{maintheorem}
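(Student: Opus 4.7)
The plan is to reduce the theorem to Stanley's Eulerian triangulation of $[0,1]^n$ via an explicit unimodular change of coordinates that makes the hyperplane description from Theorem~\ref{mainthm: hyperplane description} take a particularly simple form.

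First I would introduce the new coordinates $z_i := (x_i + x_{i+1} + \cdots + x_n) - |S_{\ge i}|$ for $i = 1, \ldots, n$, with the convention $z_{n+1} := 0$. This is an affine bijection of $\RR^n$ whose Jacobian is upper triangular with $1$'s on the diagonal, hence unimodular. Because $n \notin S$, we have $|(S \cup \{n\})_{\ge i}| = |S_{\ge i}| + 1$ for every $i \leq n$, so the two-sided partial-sum inequalities of Theorem~\ref{mainthm: hyperplane description} become simply $0 \le z_i \le 1$. The box constraints $0 \le x_i \le 1$ then translate, via $x_i = z_i - z_{i+1} + [i \in S]$, into $z_i \le z_{i+1}$ when $i \in S$ and $z_i \ge z_{i+1}$ when $i \in [n-1] \setminus S$ (the constraint at $i = n$ is absorbed into $0 \le z_n \le 1$). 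Thus $P(\Delta[S, S \cup \{n\}])$ is unimodularly equivalent to the zigzag subpolytope
\[
Q_S = \{z \in [0,1]^n : z_i \le z_{i+1} \text{ for } i \in S \text{ and } z_i \ge z_{i+1} \text{ for } i \in [n-1] \setminus S\}.
\]

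Next I would characterize which Stanley simplices $\sigma_w = \{z \in [0,1]^n : z_{w(1)} \le z_{w(2)} \le \cdots \le z_{w(n)}\}$ sit inside $Q_S$. Since the inequality $z_i \le z_{i+1}$ holds throughout $\sigma_w$ if and only if $w^{-1}(i) < w^{-1}(i+1)$, i.e., $i$ precedes $i+1$ in the one-line word of $w$, matching with the definition of $Q_S$ gives $\sigma_w \subseteq Q_S$ exactly when $w^{-1}$ has ascent set $S$. So $\pi \mapsto \sigma_{\pi^{-1}}$ is the desired bijection between permutations with ascent set $S$ and Stanley simplices inside $Q_S$. To see that these simplices cover $Q_S$, I would take any $z \in Q_S$ whose coordinates are distinct: there is a unique $w$ with $z \in \sigma_w$, and the strict inequalities inherited from $Q_S$ translate directly into $w^{-1}$ having ascent set exactly $S$. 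Boundary points of $Q_S$ are then in the union by closure, while interior-disjointness and unimodularity of the maximal pieces are inherited from Stanley's triangulation of $[0,1]^n$. Transporting everything back along the unimodular change of coordinates produces the claimed unimodular triangulation of $P(\Delta[S, S \cup \{n\}])$.

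The main obstacle is identifying the right change of coordinates: in the original $x$-coordinates the Stanley simplices do not fit inside the polytope, because each $\sigma_w$ contains the vertex $(1, \ldots, 1)$, which typically violates the upper bound $x_1 + \cdots + x_n \le |S_{\ge 1}| + 1$. Passing to the shifted right-tail sums --- exactly the quantities appearing in Theorem~\ref{mainthm: hyperplane description} --- converts the polytope into a region cut out purely by consecutive-coordinate inequalities, which is precisely the pattern that Stanley's triangulation is adapted to.
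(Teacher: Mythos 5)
Your proof is correct, and it takes a cleaner route than the paper's while resting on the same underlying computation. The paper invokes Stanley's piecewise-linear map $\psi^{-1}$ (composed with a coordinate reversal $r$) as a global transformation of $[0,1]^n$, then checks on each order simplex $\nabla_w$ that the right-tail partial sums of $\phi(x) = (r\circ\psi^{-1})(x)$ land in the band prescribed by $\operatorname{asc}(w)$; this shows $\phi(\nabla_w) \subseteq P(\Delta[S, S\cup\{n\}])$ exactly when $\operatorname{asc}(w) = S$, and Lemma~\ref{lem: dimension of intersection of snake paths lpdm} handles the covering. You instead apply a single affine unimodular change of coordinates $z_i = x_{\geq i} - |S_{\geq i}|$ (depending on $S$) that rewrites the polytope as the zigzag region $Q_S$ and then simply restricts the standard order-cone triangulation of the cube. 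The bookkeeping with ascents/descents and the covering-by-genericity argument you give are both sound. Two small remarks. First, a terminological nit: the simplices $\sigma_w = \{z : z_{w(1)} \leq \cdots \leq z_{w(n)}\}$ are the standard (Kuhn/Freudenthal) triangulation of the cube; Stanley's contribution in \cite{Sta77} is the $\psi$-twisted version, which is what the paper uses. Second, the paper's phrasing via a \emph{fixed} piecewise-linear map $\phi$ on the entire cube buys something you do not directly get: it exhibits a single triangulation of $[0,1]^n$ whose restriction to \emph{every} snake-path polytope $P(\Delta[S, S\cup\{n\}])$ simultaneously works, which is what makes the global decomposition in Theorem~\ref{mainthm: LPDM subdivision into snake paths} and the comparison with Bidkhori's triangulation in Section~\ref{Sec: Decompositions LPDM} go through smoothly. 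Your per-$S$ affine normalization is perfectly adequate (and more self-contained) for Theorem~\ref{mainthm: snake path triangulation} itself, but if you wanted to recover the paper's later results you would need to note that your affine maps $L_S$ all come from the same piecewise-linear homeomorphism of the cube, which is precisely the observation that Stanley's $\psi^{-1}$ packages.
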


\begin{corollary}
    The volume of $\Delta[p,q] = \Delta[S, S \cup \{n\}]$ for a linked symmetric snake path is $\frac{1}{n!}$ times the number of permutations with ascent set $S$.
\end{corollary}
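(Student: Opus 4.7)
The plan is to deduce this volume formula directly from Theorem \ref{mainthm: snake path triangulation}, together with the standard fact that a simplex in $\RR^n$ which is unimodular with respect to the lattice $\mathbb{Z}^n$ has Euclidean volume $\frac{1}{n!}$.

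First I would check that the feasible polytope $P(\Delta[S, S \cup \{n\}])$ is full-dimensional in $\RR^n$. Using Theorem \ref{mainthm: hyperplane description}, every defining inequality of the form $|S_{\geq i}| \leq x_i + \cdots + x_n \leq |S_{\geq i}| + 1$ has slack exactly one and is compatible with the cube constraints $0 \leq x_i \leq 1$, so one can exhibit an interior point by a small perturbation of any integer feasible point. Hence $\dim P(\Delta[S, S\cup\{n\}]) = n$, and there are no lower-dimensional ambient degeneracies to worry about in the volume computation.

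Next I would invoke Theorem \ref{mainthm: snake path triangulation} to obtain a unimodular triangulation of $P(\Delta[S, S\cup\{n\}])$ whose maximal simplices are in bijection with permutations $w \in S_n$ having ascent set $S$. Since each such simplex has volume $\frac{1}{n!}$, summing over the triangulation yields
\[ \operatorname{Vol}\bigl(P(\Delta[S, S\cup\{n\}])\bigr) = \frac{1}{n!} \cdot \#\{w \in S_n : \operatorname{Asc}(w) = S\}. \]

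Since the essential combinatorial and geometric content is already carried out inside Theorem \ref{mainthm: snake path triangulation}, no serious obstacle remains for the corollary itself; it is a clean consequence of the triangulation count. The only point to be mildly careful about is that the maximal simplices of the unimodular triangulation be genuinely $n$-dimensional so that no lower-dimensional pieces are double-counted, but this is automatic once full-dimensionality of the polytope and unimodularity of the triangulation have both been established.
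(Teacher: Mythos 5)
Your proposal is correct and takes essentially the same approach as the paper, which presents this corollary as an immediate consequence of the triangulation theorem (Theorem \ref{thm: Triangulation of Toric Polytopes } together with Proposition \ref{prop: ascent description of simplicies}) and does not spell out a separate proof. The full-dimensionality of $P(\Delta[S,S\cup\{n\}])$, which you check somewhat informally, is already established in the paper via Corollary \ref{cor: dimension formula subsets} and the definition of a linked interval.
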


In Section \ref{Sec: Decompositions LPDM}, we relate general lattice path delta matroids to the ones coming from symmetric snake paths. A \textbf{polytope subdivision} of a polytope $P$ is a collection of polytopes $P_i$ such that $\cup P_i = P$ and the intersection of any two $P_i$ and $P_j$ is a face of both polytopes. The theory of subdivisions, and the related notion of valuations, of matroid polytopes and Coxeter matroid polytopes is a rich subject with many important results; see \cite{Ardila2023,Eur2021,ferroni2022valuative}. We give a subdivision of lattice path delta matroids that we hope will find use in the theory of valuations for Lagrangian matroids.

\begin{maintheorem}\label{mainthm: LPDM subdivision into snake paths}
    The feasible polytope of a linked lattice path delta matroid $\Delta[p,q]$ has a polytope decomposition into $P_1, \ldots, P_k$, where each polytope $P_i$ is a lattice path delta matroid of a linked symmetric snake path.
\end{maintheorem}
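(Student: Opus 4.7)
The plan is to decompose $P(\Delta[p,q])$ by slicing it with the integer-level hyperplanes $\{x_i + \cdots + x_n = k\}$, using the hyperplane description of Main Theorem~A. Write $\Delta[p,q] = \Delta[S,T]$. The first step is to unravel the geometric linked hypothesis on $(p,q)$ into combinatorial conditions on $(S,T)$; I expect this translation to yield $n \in T \setminus S$ (corresponding to the corner condition at $(n,n)$) together with $|S_{\geq i}| < |T_{\geq i}|$ for every $i \in [n]$, which will guarantee that each of the slabs defined below meets $P(\Delta[S,T])$ in a full-dimensional region.

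For every integer tuple $c = (c_1, \ldots, c_n)$ satisfying $c_n = 0$, the sandwich condition $|S_{\geq i}| \leq c_i \leq |T_{\geq i}| - 1$, and $c_i - c_{i+1} \in \{0,1\}$ (with the convention $c_{n+1} = 0$), define
\[P_c \coloneqq \bigl\{ x \in P(\Delta[S,T]) : c_i \leq x_i + \cdots + x_n \leq c_i + 1 \text{ for all } i \in [n] \bigr\}.\]
Setting $S_c \coloneqq \{i \in [n-1] : c_i > c_{i+1}\}$, a telescoping sum yields $|(S_c)_{\geq i}| = c_i$, so the inequalities cutting out $P_c$ coincide with the hyperplane description provided by Theorem~A for $P(\Delta[S_c, S_c \cup \{n\}])$. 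The condition $c_n = 0$ forces $n \notin S_c$, so $S_c \subseteq [n-1]$; by the proposition classifying linked symmetric snake paths, each such pair $(S_c, S_c \cup \{n\})$ is realized by a linked snake path, and hence $P_c$ is its feasible polytope.

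To verify the decomposition property, given any $x \in P(\Delta[S,T])$ I will take $c_i \coloneqq \lfloor x_i + \cdots + x_n \rfloor$, with a small adjustment on the upper faces $x_i + \cdots + x_n = |T_{\geq i}|$ to force $c_i \leq |T_{\geq i}| - 1$; the admissibility condition $c_i - c_{i+1} \in \{0,1\}$ follows from $0 \leq x_i \leq 1$, which proves $\bigcup_c P_c = P(\Delta[S,T])$. For any two distinct admissible tuples $c \neq c'$, some coordinate differs, so the pieces $P_c$ and $P_{c'}$ lie on opposite sides of a hyperplane $\{x_i + \cdots + x_n = k\}$ and meet only along a common face sitting inside that hyperplane, yielding the polytope subdivision axiom. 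The main obstacle will be pinning down the precise combinatorial form of the linked hypothesis on $(S,T)$ and handling the upper-boundary adjustment cleanly; once these are in place, the rest of the argument is mechanical bookkeeping with Theorem~A and the snake path proposition.
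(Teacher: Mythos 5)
Your proposal takes essentially the same decomposition as the paper: the admissible tuples $c$ are precisely the level sequences $c_i = |R_{\geq i}|$ of subsets $R \subseteq [n-1]$ with $S \leq R \leq T \backslash \{n\}$ in the type $C_n$ Gale order, so the pieces $P_c$ coincide with the $P(\Delta[R,\,R\cup\{n\}])$ in the paper's statement, and the covering argument via $c_i = \lfloor x_i + \cdots + x_n \rfloor$ with a boundary adjustment is the same floor-function idea the paper uses when it sets $a_i = \lfloor x_i + \cdots + x_n\rfloor$, $b_i = \lceil x_i + \cdots + x_n\rceil$ and then enlarges the resulting (possibly unlinked) snake path to a linked one. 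The one genuine point of divergence is the verification of the polytope-decomposition axiom: the paper routes this through Proposition \ref{prop: intersections of LPDM are LPDM} (intersections of lattice path delta matroid polytopes are again such polytopes) and Proposition \ref{prop: faces of snake path lpdm} (subintervals of a toric interval give faces of the toric polytope), whereas you observe directly that distinct admissible tuples $c \neq c'$ are separated by the hyperplanes $\{x_i + \cdots + x_n = k\}$, so $P_c \cap P_{c'}$ is an intersection of such coordinate-level faces of each piece and hence a common face. Your version is more elementary and self-contained and avoids the slight opacity in the paper's enlargement step; the paper's version is more structural and reuses its earlier machinery. Both rest squarely on the hyperplane description of Theorem \ref{mainthm: hyperplane description}, so I would classify this as the same approach with a cleaner finish on the intersection step.
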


As a corollary, we calculate the volume of an arbitrary linked lattice path delta matroid by first subdividing into symmetric snake paths and then triangulating each component.

\begin{corollary}
    Let $\Delta[S,T]$ be a linked lattice path delta matroid. Let $m$ be the number of permutations with ascent set $R$ such that $S \leq R \leq T \backslash \{n\}$ in the type $C_n$ Gale order. Then,
     \[\operatorname{vol}(P(\Delta[S,T])) = \frac{m}{n!}. \]
\end{corollary}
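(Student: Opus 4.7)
The plan is to combine Theorem \ref{mainthm: LPDM subdivision into snake paths} with the volume formula for linked symmetric snake paths. First I would invoke that theorem to write
\[P(\Delta[S,T]) = P_1 \cup \cdots \cup P_k\]
as a polytope subdivision, where each $P_i$ is the feasible polytope of a lattice path delta matroid associated with a linked symmetric snake path $[p_i, q_i] \subseteq [p,q]$. By the proposition classifying linked symmetric snake paths in subset form, each such component is $P(\Delta[R_i, R_i \cup \{n\}])$ for a unique $R_i \subseteq [n-1]$.

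Next I would identify the indexing set of the $R_i$'s explicitly. Because the subdivision lives inside $P(\Delta[S,T])$, each snake path $[p_i,q_i]$ must lie inside $[p,q]$; under the labeling bijection between symmetric lattice paths and subsets of $[n]$, this translates to $S \leq R_i$ and $R_i \cup \{n\} \leq T$ in the type $C_n$ Gale order. The latter condition forces $n \in T$ and $R_i \leq T \setminus \{n\}$. Conversely, every $R \subseteq [n-1]$ satisfying $S \leq R \leq T \setminus \{n\}$ should produce a linked symmetric snake path in $[p,q]$ and hence contribute a piece to the decomposition. This bijective identification of the index set with the Gale interval is the content of (or an immediate consequence of) the explicit combinatorial construction used to establish Theorem \ref{mainthm: LPDM subdivision into snake paths}.

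Because pieces of a polytope subdivision meet only along proper faces of lower dimension, Lebesgue volumes add. Applying the corollary following Theorem \ref{mainthm: snake path triangulation} gives
\[\operatorname{vol}(P_i) = \frac{1}{n!} \cdot \#\{\pi \in S_n : \pi \text{ has ascent set } R_i\},\]
and summing over all pieces yields
\[\operatorname{vol}(P(\Delta[S,T])) = \frac{1}{n!} \sum_{\substack{R \subseteq [n-1] \\ S \leq R \leq T \setminus \{n\}}} \#\{\pi \in S_n : \pi \text{ has ascent set } R\} = \frac{m}{n!},\]
which is the claimed identity.

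The main obstacle is the second step: pinning down exactly which subsets $R$ index the pieces of the subdivision and verifying that each appears exactly once. The bounds $S \leq R \leq T \setminus \{n\}$ are forced by compatibility with the ambient Gale interval, but one still must check surjectivity and distinctness of the indexing. This bijective accounting should fall out of the explicit combinatorial recipe used to build the decomposition in Section \ref{Sec: Decompositions LPDM}, but it requires unpacking that construction rather than treating Theorem \ref{mainthm: LPDM subdivision into snake paths} as a black box.
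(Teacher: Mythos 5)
Your proposal is correct and follows the same route as the paper: subdivide $P(\Delta[S,T])$ via Theorem~\ref{mainthm: LPDM subdivision into snake paths} and sum the volumes of the pieces using Corollary~\ref{cor: volume of toric intervals ascents}. The indexing issue you flag as the main obstacle is already settled by the explicit union over $S \leq R \leq T \setminus \{n\}$ in the statement of that theorem (with containment of each piece checked in its proof) together with Lemma~\ref{lem: dimension of intersection of snake paths lpdm}, which guarantees distinct $R$'s give pieces meeting in lower dimension so that volumes add without double counting.
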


\subsection*{Acknowledgements}

This paper was a result of Cornell's Summer Program for Undergraduate Research (SPUR). We thank the organizers for their hospitality. We are also grateful to Allen Knutson and Alexander Vidinas for many helpful conversations. MS was partially funded by the NSF Postdoctoral Research Fellowship DMS-2103136.

\section{Lattice Path Delta Matroids}\label{sec: Lattice Path Model}

Let $[n]$ denote the set $\{1, \ldots, n\}$. We use $e_1, \ldots, e_n$ to denote the standard basis of $\RR^n$ and $e_S = \sum_{i \in S} e_i$ for any subset $S \subseteq [n]$. Given a finite set $E$, we let $\RR^E$ be the vector space with basis $\{e_i\}_{i \in E}$. Let $[\pm n] = \{-n,-(n-1), \ldots, -1,1,2,\ldots,n\}$. For clarity, we often denote $-i$ by $\overline{i}$ in our examples.

We freely use standard definitions and constructions of matroids. See \cite{Oxley} for a general reference. For a matroid $M$ on ground set $[n]$, let $\mathcal{B}(M)$ denote its bases. The \textbf{base polytope} of a matroid $M$ is the polytope
 \[P(M) = \operatorname{conv}(e_B \in \RR^n\; \lvert \; B \in \mathcal{B}(M)). \]

\subsection{Delta Matroids and Lagrangian Matroids}

\begin{definition}
    A \textbf{delta matroid} $\Delta$ on ground set $E$ is a collection of subsets $\mathcal{F}(\Delta)$ of $E$, called feasible sets, such that for any two $A_1,A_2 \in \mathcal{F}(\Delta)$ and any $e \in (A_1 \backslash A_2) \cup (A_2 \backslash A_1)$ there exists an $f \in (A_1 \backslash A_2) \cup (A_2 \backslash A_1)$ such that $(A_1 \backslash \{e,f\}) \cup (\{e,f\} \backslash A_1)$ is in $\mathcal{F}(\Delta)$. 
\end{definition}

We say that a subset $S$ of $[\pm n]$ is \textbf{admissible} if $|S \cap \{i, -i\}| \leq 1$. There is a bijection between admissible subsets of $[\pm n]$ of cardinality $n$ and all subsets of $[n]$ by the map $A \mapsto A \cap [n]$. A \textbf{Lagrangian matroid}\footnote{This is not the standard definition, but it is equivalent. See \cite{Borovik2003} for more information.} $\mathcal{L}$ is a collection $\mathcal{B}(\mathcal{L})$ of admissible subsets of $[ \pm n ]$ of cardinality $n$ such that the corresponding collection of subsets of $[n]$ under the map $S \mapsto S \cap [n]$ is a delta matroid.

For any poset $\mathbb{P}$ and $x \leq y$, let $[x,y] = \{z \in \mathbb{P}\; \lvert \; x \leq z \leq y\}$ denote the interval from $x$ to $y$. A \textbf{chain} is a collection of elements $x_1 \leq \cdots \leq x_k$. A \textbf{maximal chain} is a chain of maximal length. The poset $\mathbb{P}$ is \textbf{ranked} with rank function $\operatorname{rk}$ if every maximal chain has the same length and $\operatorname{rk}(x)$ is the length of a maximal chain ending at $x$. A \textbf{poset isomorphism} is a bijection $\phi: \mathbb{P} \to \mathbb{P}'$ such that $\phi(x) \leq \phi(y)$ if and only if $x \leq y$.

The \textbf{type $C_n$ Gale order} on $2^{[n]}$ is the partial order given by $A \leq B$ for subsets $A,B \subseteq [n]$, where $A = \{a_1 < \cdots < a_j\}$ and $B = \{b_1 < \cdots < b_k\}$, if and only if $j \leq k$ and $a_{j-i+1} \leq b_{k-i+1}$ for all $i \in [j]$. 

Similarly, the \textbf{type $C_n$ Gale order} on admissible subsets of cardinality $n$ of $[\pm n]$ is the partial order given by $A \leq B$, where $A = \{a_1 < a_2 \cdots < a_n \}$ and $B = \{b_1 < b_2 < \cdots < b_n\}$, if and only if $a_i \leq b_i$ for all $i \in [n]$.

For any subset $S \subseteq [n]$, let $S_{\geq i} = \{j \in S \; \lvert \; i \leq j\}$. We will often use the following observation:
\begin{lemma}\label{lem: Gale order and truncated sets}
    Let $S,T \subseteq [n]$. Then, $S \leq T$ in the type $C_n$ Gale order if and only if
    $|S_{\geq i}|\leq |T_{\geq i}|$ for all $i \in [n]$.
\end{lemma}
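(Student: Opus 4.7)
The plan is to reformulate both the Gale condition and the truncation-size condition as statements about the $m$-th largest element of a subset. Writing $S = \{s_1 < \cdots < s_j\}$ and $T = \{t_1 < \cdots < t_k\}$, the core observation is the equivalence
\[ |S_{\geq i}| \geq m \iff s_{j-m+1} \geq i, \]
valid for $m \in [j]$, together with its analogue for $T$. Each direction of the lemma then becomes a substitution of appropriate values for $m$ and $i$.

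For the forward direction, assuming $S \leq T$ in the Gale order, I would fix $i$ and set $m = |S_{\geq i}|$. The observation above gives $s_{j-m+1} \geq i$; applying the Gale inequality $s_{j-m+1} \leq t_{k-m+1}$ (along with $j \leq k$, which is part of the Gale definition, to ensure the index $k-m+1$ is valid) shows that the $m$-th largest element of $T$ is also $\geq i$. Hence $|T_{\geq i}| \geq m$, as desired.

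For the converse, taking $i = 1$ first yields $j = |S_{\geq 1}| \leq |T_{\geq 1}| = k$. Then for each $r \in [j]$, setting $i = s_{j-r+1}$ gives $|S_{\geq i}| \geq r$, so by hypothesis $|T_{\geq i}| \geq r$; by the same observation this translates to $t_{k-r+1} \geq i = s_{j-r+1}$, recovering exactly the Gale inequality.

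There is no substantive obstacle here; the lemma is essentially a change of perspective between counting elements in an upper tail and thresholding the $m$-th largest element. The only care required is with edge cases (empty subsets, or $i$ exceeding $\max S$) and with keeping the reversed indexing convention straight.
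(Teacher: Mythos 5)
Your proof is correct and follows essentially the same route as the paper's: both directions rest on the duality between the count $|S_{\geq i}|$ and the value of the $m$-th largest element, with the forward direction transporting elements of $S_{\geq i}$ to elements of $T_{\geq i}$ via the Gale inequalities and the converse obtained by specializing $i$ to $s_{j-r+1}$ (the paper phrases this as a contrapositive, you phrase it directly, but the content is identical). Your threshold formulation $|S_{\geq i}| \geq m \iff s_{j-m+1} \geq i$ is a slightly tidier packaging, and you correctly flag the edge cases ($m=0$, validity of the index $k-m+1$) that need checking.
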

\begin{proof}
    Assume first that $S=\{ s_1<\cdots<s_j \} \leq T=\{ t_1<\cdots<t_k \}$ in the type $C_n$ Gale order. 
    Then for any $i \in [n]$, writing $S_{\geq i}=\{ s_{i_1}<\cdots<s_{i_\ell} \}$, we have by definition that $s_{i_m}\leq t_{k-j+i_m}$ for all $m \in [\ell]$, which implies $|S_{\geq i}|\leq |T_{\geq i}|$ (this is immediate if $S_{\geq i}=\emptyset$). 
    Conversely, assume that $|S_{\geq i}|\leq |T_{\geq i}|$ for all $i \in [n]$.
    In particular, $j=|S|=|S_{\geq 1}|\leq |T_{\geq 1}|\leq |T|=k$.
    Suppose that $v=s_{j-i+1}>t_{k-i+1}$ for some $i \in [j]$.
    Then $S_{\geq v}=\{ s_{j-i+1}<\cdots<s_j \}$, implying $|S_{\geq v}|=i$, while $T_{\geq v} \subseteq \{ t_{k-i+2}<\cdots<t_k \}$, so that $|T_{\geq v}|\leq i-1$.
    Hence, $|S_{\geq v}|>|T_{\geq v}|$, a contradiction.
    Thus, we must have $s_{j-i+1}\leq t_{k-i+1}$ for all $i \in [j]$.
\end{proof}

\begin{remark}
     By Exercise 8.5 of \cite{BB05}, this order is isomorphic to the induced Bruhat order on the maximal parabolic quotient of $W$ of type $C_n$ by the cominuscule weight.
\end{remark}

The type $C_n$ Gale order is a ranked poset with rank function $\ell$ given by the following lemma:

\begin{lemma}\label{lem: rank calculation of Gale order on subsets}
    The rank of $S$ in the type $C_n$ Gale order on subsets of $[n]$ is $\ell(S) = \sum_{i \in S} i$.
\end{lemma}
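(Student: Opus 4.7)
The plan is to re-encode subsets by the rank sequence $f_S(i) := |S_{\geq i}|$ and reduce the whole problem to the coordinatewise order on these vectors. By Lemma \ref{lem: Gale order and truncated sets}, $S \leq T$ is equivalent to $f_S \leq f_T$ coordinatewise, and a swap of summations yields
\[
\sum_{i=1}^{n} f_S(i) \;=\; \sum_{s \in S} s \;=\; \ell(S).
\]
So $\ell$ is a strictly monotone $\mathbb{Z}$-valued function on the poset, and to identify it with the rank function it suffices to prove that every cover $S \lessdot T$ satisfies $\ell(T) - \ell(S) = 1$ and that every strict pair $S < T$ extends to a saturated chain from $S$ to $T$.

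Next I would characterize the covers in $f$-language. The assignment $S \mapsto f_S$ is a bijection onto weakly decreasing sequences with consecutive differences in $\{0,1\}$ (take $f_S(n+1) = 0$; then $S = \{i : f_S(i) - f_S(i+1) = 1\}$). A direct check of the two drop-size constraints shows that $f_S + e_j$ remains such a sequence precisely when $j \notin S$ and either $j = 1$ or $j-1 \in S$, in which case the corresponding subset is $S \cup \{1\}$ or $(S \setminus \{j-1\}) \cup \{j\}$, respectively. Each such move changes $\ell$ by exactly $1$, so once paragraph~3 produces such a move below any given $T$, these moves are necessarily all the covers.

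The key step is therefore the following: for every strict pair $S < T$ there exists an index $j$ with $j \notin S$, with $j = 1$ or $j - 1 \in S$, and with $f_S(j) < f_T(j)$; any such $j$ yields a cover $S \lessdot S'$ with $f_{S'} = f_S + e_j \leq f_T$, so $S' \leq T$. I would locate $j$ as $\min(D \setminus S)$ for $D = \{i : f_S(i) < f_T(i)\}$. To see $D \setminus S \neq \emptyset$, start at $k = \min D$; if $k \in S$ then the identities $f_S(i+1) = f_S(i) - 1$ (for $i \in S$) and $f_T(i+1) \geq f_T(i) - 1$ propagate the strict inequality $f_S(i) < f_T(i)$ forward along the maximal run $k, k+1, \ldots$ of consecutive elements of $S$. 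This run must terminate before $n+1$, because $k, \ldots, n \in S$ would give $f_S(k) = n - k + 1$, while the trivial bound $f_T(k) \leq n - k + 1$ then contradicts $f_T(k) > f_S(k)$. The exit point $k'$ lies in $D \setminus S$. Finally, minimality of $j$ forces $j - 1 \in S$ when $j > 1$: otherwise $f_S(j-1) = f_T(j-1) \geq f_T(j) > f_S(j) = f_S(j-1)$, a contradiction.

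Induction on $\ell(T) - \ell(S)$ then produces a saturated chain from $S$ to $T$ of length $\ell(T) - \ell(S)$, and specializing to $S = \emptyset$ identifies $\ell$ as the rank function. I expect the walk through a maximal run of $S$ in paragraph~3 to be the main obstacle; everything else is bookkeeping on monotone integer sequences with unit drops.
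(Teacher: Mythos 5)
Your proof is correct, and it takes a genuinely different and more thorough route than the paper's. The paper argues by induction on $|S|$: it exhibits one explicit maximal chain from $\emptyset$ to $S$ by inserting the smallest element $i$ of $S$ as a $1$ and sliding it up to $i$, giving $\ell(S)=i+\ell(S\setminus i)$; it takes for granted both the description of the covering relations and the fact that the poset is graded (all maximal chains between two comparable elements have the same length). You instead re-encode subsets by the vectors $f_S(i)=|S_{\geq i}|$ so that (via Lemma \ref{lem: Gale order and truncated sets}) the Gale order becomes the componentwise order, classify the covers as exactly the moves $f_S\mapsto f_S+e_j$ with $j\notin S$ and $j=1$ or $j-1\in S$, and then prove the saturation property that any strict pair $S<T$ admits such a move staying below $T$ -- your run-propagation argument for locating $j=\min(D\setminus S)$ is the real content and it checks out, including the termination bound $f_T(k)\leq n-k+1$ and the minimality argument forcing $j-1\in S$. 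What your version buys is an actual proof that the poset is ranked and that every cover increments $\ell$ by exactly $1$, facts the paper uses later (e.g.\ in the coloop characterization and in the chain--permutation bijection of Proposition \ref{cor: volume calculation}) but never establishes; the paper's version buys only brevity. No gaps.
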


\begin{proof}
    We can show this by induction on the cardinality of $S$. When $S = \{i \}$ for some $i \in [n]$, then we have that $\ell(S) = i$ since we have the chain
     \[ \emptyset \leq 1 \leq 2 \leq \cdots \leq i .\]
    In general, if $i$ is the smallest element of $S$, then $\ell(S) = i + \ell(S \backslash i)$ since we have the maximal chain
     \[ S \leq 1 \cup S \leq 2 \cup S \leq \cdots \leq i \cup S. \]
\end{proof}

\begin{proposition}
    Let $S$ and $T$ be subsets of $[n]$, and let $[S,T] = \{A \; \lvert \; S \leq A \leq T\}$ be the interval in the type $C_n$ Gale order. Then $[S,T]$ are the feasible sets of a delta matroid.
\end{proposition}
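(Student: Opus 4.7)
The plan is to verify the symmetric exchange axiom for $\mathcal{F} := [S,T]$ directly, using Lemma~\ref{lem: Gale order and truncated sets} to reformulate the membership condition: $A \in [S,T]$ if and only if $|S_{\geq i}| \leq |A_{\geq i}| \leq |T_{\geq i}|$ for every $i \in [n]$. Given $A_1, A_2 \in [S,T]$ and $e \in A_1 \triangle A_2$, I may assume without loss of generality that $e \in A_1 \setminus A_2$. The task becomes finding $f \in A_1 \triangle A_2$ (possibly $f = e$) such that $B := A_1 \triangle \{e,f\}$ satisfies the same inequalities.

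First I would record the local effect of the swap on the rank statistic $|A_{1,\geq i}|$. If $f \in A_1 \setminus A_2$, then $|B_{\geq i}|$ differs from $|A_{1,\geq i}|$ by $-2$ on $i \leq \min(e,f)$ and by $-1$ on $\min(e,f) < i \leq \max(e,f)$. If $f \in A_2 \setminus A_1$, then $|B_{\geq i}|$ differs from $|A_{1,\geq i}|$ by $+1$ on $(e,f]$ when $f > e$ and by $-1$ on $(f,e]$ when $f < e$. Outside the affected interval the values coincide, so the bounds there are automatically preserved.

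To pick $f$, I would study the signed discrepancy $\psi(i) := |A_{1,\geq i}| - |A_{2,\geq i}|$. This function equals $0$ at $i = n+1$ and, as $i$ decreases, jumps by $+1$ at each element of $A_1 \setminus A_2$ and by $-1$ at each element of $A_2 \setminus A_1$. Crucially, because $A_2 \in [S,T]$ we obtain the slack estimates $|A_{1,\geq i}| - |S_{\geq i}| \geq \psi(i)$ and $|T_{\geq i}| - |A_{1,\geq i}| \geq -\psi(i)$, so the room available in each bound is at least $|\psi(i)|$ on the appropriate side. I would then pair the $\pm 1$ jumps of $\psi$ using the standard non-crossing (balanced-parenthesis) matching: if $e$'s $+1$ jump is paired with a $-1$ jump at some position, take $f$ there (so $f \in A_2 \setminus A_1$); if $e$ is unpaired, take $f$ to be another unpaired $+1$ jump in $A_1 \setminus A_2$, and in the degenerate case $|A_1 \triangle A_2| = 1$ take $f = e$ so that $B = A_2 \in [S,T]$ automatically. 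The non-crossing structure forces $\psi$ to retain a controlled sign on the interval between $e$ and $f$, yielding exactly the slack required to absorb the $\pm 1$ or $\pm 2$ shifts in $|A_{1,\geq i}|$.

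The principal obstacle is the case analysis needed to verify the bounds: one has to handle separately the configurations $f > e$ versus $f < e$ and $f \in A_1 \setminus A_2$ versus $f \in A_2 \setminus A_1$, checking both the upper and lower bound at each index. The non-crossing matching is the device that makes the analysis uniform, reducing every configuration to the single observation that $\psi$ has the appropriate sign on the relevant interval; the slack inequalities coming from $A_2 \in [S,T]$ then close the argument.
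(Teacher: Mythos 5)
Your route is genuinely different from the paper's: the paper reduces the claim to the theorem of Tsukerman and Williams that projections of Bruhat intervals to parabolic quotients are Coxeter matroids, whereas you verify the symmetric exchange axiom directly. Your framework is the right one for such a direct proof: using Lemma~\ref{lem: Gale order and truncated sets} to phrase membership in $[S,T]$ as $|S_{\geq i}| \leq |A_{\geq i}| \leq |T_{\geq i}|$, introducing $\psi(i) = |A_{1,\geq i}| - |A_{2,\geq i}|$, and noting the slack bounds $|A_{1,\geq i}| - |S_{\geq i}| \geq \psi(i)$ and $|T_{\geq i}| - |A_{1,\geq i}| \geq -\psi(i)$ are all correct and are exactly what is needed. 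Your tally of how the swap perturbs $|A_{1,\geq i}|$ in each of the four configurations for $f$ is also correct.

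However, the step where you select $f$ via a non-crossing balanced-parenthesis matching has a genuine gap. The correct direction in which to look for $f$ is governed by the sign of $\psi(e+1)$, and a fixed parenthesis matching does not track this. Concretely, take $n=4$, $A_1 = \{1,4\}$, $A_2 = \{2,3\}$, $e=1$. Then $\psi(4)=1$, $\psi(3)=0$, $\psi(2)=-1$, $\psi(1)=0$; the jump sequence read from $i=4$ down is $+1,-1,-1,+1$. Under the ``$+1$ opens'' convention the $+1$ at $i=1$ is unmatched, but the only other unmatched jump is the $-1$ at $i=2$, so there is no ``other unpaired $+1$'' to pair with, and $|A_1 \triangle A_2| = 4 \neq 1$; your rule produces no candidate. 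Even when $e$ is matched, the matched partner can fail your own sign requirement: if $e$ is matched to some $f<e$, one needs $\psi(i)\geq 1$ on $(f,e]$, which holds only when $\psi(e+1)\geq 0$, and symmetrically $f>e$ requires $\psi(e+1)\leq -1$; neither is guaranteed by the matching. A second example, $A_1=\{1,4\}$, $A_2=\{2\}$, $e=4$, shows that when $e$ is unpaired and you take $f$ to be the other unpaired $+1$ (here $f=1$), the required bound $\psi(i)\geq 2$ for $i\leq 1$ fails since $\psi(1)=1$.

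The fix is to replace the matching heuristic by a first-crossing argument driven by the sign of $\psi(e+1)$. With $e\in A_1\setminus A_2$: if $\psi(e+1)\geq 0$, either $\psi(i)\geq 1$ for all $i\leq e$ (take $f=e$), or there is a largest $f<e$ with $\psi(f)\leq 0$; then $\psi(f+1)\geq 1 > \psi(f)$ forces $f\in A_2\setminus A_1$, and $\psi(i)\geq 1$ on $(f,e]$ by maximality. If instead $\psi(e+1)\leq -1$, then since $\psi(n+1)=0$ there is a smallest $f>e$ with $\psi(f+1)\geq 0$; minimality gives $\psi(i)\leq -1$ on $(e,f]$, and $\psi(f)\leq -1 < \psi(f+1)$ forces $f\in A_2\setminus A_1$. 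In each case your swap-perturbation computation combined with the slack estimates shows $A_1\triangle\{e,f\}\in[S,T]$. The case $e\in A_2\setminus A_1$ is symmetric after replacing $\psi$ by $-\psi$. This closes the argument and keeps it self-contained, an advantage over the paper's appeal to the Coxeter-matroid literature.
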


\begin{proof}
    Tsukerman and Williams showed that the projection of intervals in a Coxeter group $W$ to any parabolic quotient $W/W_I$ are Coxeter matroids. Lagrangian matroids are in bijection with the Coxeter matroids corresponding to the maximal parabolic quotient of the type $C_n$ Coxeter group corresponding to the cominuscule weight. Under this bijection, intervals in the Bruhat order of $W/W_I$ map to intervals in the type $C_n$ Gale order.
\end{proof}

\begin{definition}
    Given a delta matroid $\Delta$ or the corresponding Lagrangian matroid $\mathcal{L}$, the \textbf{feasible polytope} is the polytope
     \[ P(\Delta) = P(\mathcal{L}) = \operatorname{conv}\left (e_S \in \RR^E \; \lvert \; S \in \mathcal{F}(\Delta) \right ).\]
\end{definition}

\begin{theorem}\cite{Borovik2003}\label{thm: type B GGMS}
    A polytope $P$ is the feasible polytope of a delta matroid if and only if $P$ is contained in the cube $[0,1]^n$ and all the edge directions are $e_i - e_j$ or $e_i$.
\end{theorem}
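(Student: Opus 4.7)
The plan is to prove the two implications of the equivalence separately, using the symmetric exchange axiom together with the geometry of $\{0,1\}$-polytopes. For the forward direction, I would first observe that the vertices of $P = P(\Delta)$ are the indicator vectors $e_S$ for $S \in \mathcal{F}(\Delta)$, which all lie in $\{0,1\}^n$, so $P \subseteq [0,1]^n$ is immediate. Now let $e_{A_1}, e_{A_2}$ be the endpoints of an edge of $P$. The direction $e_{A_2} - e_{A_1}$ is a $\{-1,0,1\}$-vector with support exactly $A_1 \triangle A_2$, so it suffices to prove $|A_1 \triangle A_2| \leq 2$ and then extract the sign pattern. I would choose a linear functional $\varphi$ whose maximum on $P$ is attained exactly on this edge, so $\varphi(e_{A_1}) = \varphi(e_{A_2})$ and $\varphi(e_B) < \varphi(e_{A_1})$ for every other feasible $B$. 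Assuming $|A_1 \triangle A_2| \geq 3$ for contradiction, I would apply the symmetric exchange axiom at a carefully chosen $e \in A_1 \triangle A_2$ (selected by sign considerations on the coefficients of $\varphi$ restricted to $A_1 \triangle A_2$) to obtain a feasible $A_3 = A_1 \triangle \{e, f\}$ with $\varphi(e_{A_3}) \geq \varphi(e_{A_1})$. This forces $A_3 \in \{A_1, A_2\}$, but the bound $|A_1 \triangle A_3| \leq 2 < |A_1 \triangle A_2|$ rules out $A_3 = A_2$, while $A_3 = A_1$ contradicts $\{e,f\} \neq \emptyset$. A brief case analysis on whether $e, f$ lie in $A_1 \setminus A_2$ or $A_2 \setminus A_1$ then pins down the edge direction.

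For the backward direction, suppose $P \subseteq [0,1]^n$ has all edge directions of the prescribed form. Since both $e_i - e_j$ and $e_i$ steps preserve the property of being a $\{0,1\}$-vector and the edge graph of $P$ is connected, all vertices of $P$ are $\{0,1\}$-vectors, so I set $\mathcal{F} = \{S \subseteq [n] : e_S \text{ is a vertex of } P\}$. To verify the symmetric exchange axiom for $A_1, A_2 \in \mathcal{F}$ and $e \in A_1 \triangle A_2$, I would select a linear functional $\varphi$ maximized at $e_{A_1}$ on $P$ arranged so that the edge emanating from $e_{A_1}$ selected by $\varphi$ changes the $e$-coordinate. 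The edge-direction hypothesis then identifies this edge as either $\pm e_e$ (in which case $f = e$) or $\pm(e_e - e_f)$ for some $f \in A_1 \triangle A_2$, producing the required exchange partner.

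The main obstacle is the forward direction. The delicate point is the sign analysis on $\varphi$ restricted to $A_1 \triangle A_2$, together with the case split on whether the exchange partner $f$ equals $e$ or lies on the same or opposite side of $A_1 \triangle A_2$ as $e$. An alternative route that avoids such sign work would be to apply the symmetric exchange twice in order to produce feasible sets $A_3, A_4$ with $e_{A_3} + e_{A_4} = e_{A_1} + e_{A_2}$ and $\{A_3, A_4\} \neq \{A_1, A_2\}$, contradicting the uniqueness of the convex decomposition of the midpoint of the edge among the vertices of $P$.
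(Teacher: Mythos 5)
The paper itself does not prove this theorem: it is quoted from \cite{Borovik2003}, so your attempt has to be measured against the standard proof in the literature rather than an argument in the text. Your backward direction is essentially that standard argument and is fine in outline (restrict to the face maximizing $\sum_{i \in A_1 \cap A_2} x_i - \sum_{i \notin A_1 \cup A_2} x_i$, then move off $e_{A_1}$ along an edge that increases the $e$-th coordinate in the direction of $A_2$); note only that the statement implicitly assumes the vertices of $P$ lie in $\{0,1\}^n$, since a segment such as $\operatorname{conv}\left((0,\tfrac12),(1,\tfrac12)\right)$ satisfies the stated conditions verbatim but is not a feasible polytope.

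The forward direction, however, has a genuine gap exactly at the point you flag as delicate. Writing $d_g = c_g$ for $g \in A_2 \setminus A_1$ and $d_g = -c_g$ for $g \in A_1 \setminus A_2$, where $c$ is the coefficient vector of $\varphi$, the equality $\varphi(e_{A_1}) = \varphi(e_{A_2})$ gives $\sum_{g \in A_1 \triangle A_2} d_g = 0$, and $\varphi(e_{A_1 \triangle \{e,f\}}) - \varphi(e_{A_1}) = d_e + d_f$ (or $d_e$ if $f = e$). The exchange axiom lets you choose $e$ but gives you no control over $f$: the best you can guarantee is $d_e \geq 0$ by taking $e$ to maximize $d_g$, and the axiom may then return the $f$ minimizing $d_f$. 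The sign pattern $(1,1,-2)$ on a three-element symmetric difference shows that $d_e + d_f < 0$ can hold for every admissible response $f$ to every choice of $e$, so a single application of the axiom never forces $\varphi(e_{A_3}) \geq \varphi(e_{A_1})$ and the intended contradiction does not materialize. Your alternative route is the correct standard proof, but it needs the simultaneous exchange property: for $e \in A_1 \triangle A_2$ there exists $f \in A_1 \triangle A_2$ with \emph{both} $A_1 \triangle \{e,f\}$ and $A_2 \triangle \{e,f\}$ feasible. This is the delta-matroid analogue of the symmetric basis exchange theorem for matroids; it is a theorem in its own right, not the stated axiom, and it is precisely the missing ingredient. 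To complete the argument you must either prove that strengthening or argue as in \cite{Borovik2003}, where the result is deduced from the equivalence of the exchange axiom with the Gale-order maximality property over all admissible orderings.
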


\subsection{Symmetric Lattice Path Model}

We begin by describing lattice paths and the theory of (ordinary) lattice path matroids.

A $(a,b)$-\textbf{lattice path} is a path from $(0,0)$ to $(a,b)$ using only eastward steps $(0,1)$ and northward steps $(1,0)$. We will denote a lattice path by a word in the letters $E$ and $N$ where the letters represent the eastward and northward steps of the path, respectively. We define a partial order on all $(a,b)$-lattice paths by $p \leq q$ if the path $p$ is always below the path $q$. We will often use the following standard description of this partial order in terms of words:

\begin{lemma}
    Let $p = \alpha_1 \cdots \alpha_{a+b}$ and $q = \beta_1 \cdots \beta_{a+b}$ be $(a,b)$-lattice paths. Then, $p$ is weakly below $q$ if and only if the number of eastward steps in the subword $\alpha_k \cdots \alpha_{a+b}$ is smaller than the number of eastward steps in the subword $\beta_k \cdots \beta_{a+b}$ for all $k \in [a+b]$.
\end{lemma}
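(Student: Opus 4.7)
The plan is to convert the suffix east-step condition into a pointwise condition on the vertices of the paths, and then match that pointwise condition to the geometric ``weakly below'' relation.

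First, I would set up coordinates. For a path $w = w_1 \cdots w_{a+b}$, let $e_k(w)$ denote the number of east steps among $w_1, \ldots, w_k$, so that after $k$ steps the path sits at the vertex $(e_k(w), k - e_k(w))$ on the anti-diagonal $x+y=k$. Since $e_{a+b}(w) = a$ for every $(a,b)$-lattice path, the east count in the suffix $w_k \cdots w_{a+b}$ equals $a - e_{k-1}(w)$. Hence the lemma's suffix inequality, asked to hold for all $k \in [a+b]$, is equivalent to the prefix inequality $e_k(p) \geq e_k(q)$ for all $k \in \{0, 1, \ldots, a+b-1\}$, and the missing case $k = a+b$ is automatic since both sides equal $a$.

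Next I would prove the key equivalence: $p$ is weakly below $q$ if and only if $e_k(p) \geq e_k(q)$ for every $k \in \{0, \ldots, a+b\}$. For the forward direction, each vertex of $p$ lies on the region weakly below $q$, and the vertex at time $k$ lies on $x+y = k$; comparing it against the description of the below-$q$ region (points $(x, y)$ with $y$ at most the maximum height of $q$ over column $x$) translates immediately into $e_k(q) \leq e_k(p)$. For the converse, suppose $e_k(p) \geq e_k(q)$ for all $k$, so that each vertex of $p$ is in the region weakly below $q$. Then the axis-aligned segment joining two consecutive vertices of $p$ stays inside the below-$q$ region as well, because that region is monotone under the partial order ``go east or go down'': an east segment lies in a constant-$y$ slice whose right endpoint is still below $q$, and a north segment lies in a constant-$x$ slice whose upper endpoint is still below $q$. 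Hence $p$ is weakly below $q$ globally.

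The main obstacle is this upgrade from a vertex-by-vertex comparison to a global containment of the two piecewise-linear curves. The cleanest way I see is to identify the below-$q$ region with $\{(x,y) \in [0,a] \times [0,b] : y \leq y_q(x)\}$ for the non-decreasing step function $y_q$ giving the maximum height of $q$ at column $x$; non-decreasingness of $y_q$ is exactly what powers the edge-by-edge argument on $p$. With this in hand, the rest of the proof is a direct complementary count between the prefix east steps (the natural quantity for the pointwise inequality) and the suffix east steps (the quantity appearing in the statement).
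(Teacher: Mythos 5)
The paper states this lemma without proof, labelling it a ``standard description,'' so there is no in-paper argument to compare against. Your proposal supplies a complete and essentially correct proof: the translation from suffix east-counts to the prefix inequality $e_{k}(p)\ge e_{k}(q)$ is right, the identification of $e_{k}(p)\ge e_{k}(q)$ with the vertex of $p$ at time $k$ lying in the region below $q$ (via the unique crossing of the anti-diagonal $x+y=k$) is right, and the edge-by-edge upgrade from vertices to the whole path using the non-decreasing step function $y_{q}$ is the correct mechanism.

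One small wording slip in the converse: for an east segment of $p$ from $(x_1,y)$ to $(x_1+1,y)$, it is the \emph{left} endpoint's membership in $\{(x,y):y\le y_{q}(x)\}$, together with east-closedness of that region ($y_{q}$ non-decreasing), that forces the whole segment into the region. You wrote ``right endpoint,'' which would require west-closedness, a property the region does not have. Since both endpoints are vertices of $p$ and hence both already known to lie in the region, the argument is unaffected, but the endpoint named should be the left one. (Symmetrically, for the north segment you correctly single out the upper endpoint, using downward-closedness.)
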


Given an $(a,b)$-lattice path $p = \alpha_1\ldots\alpha_{a+b}$, the type $A$ \textbf{label} of $p$ is the subset $\labA(p) \subseteq [n]$ given by $i \in \labA$ if and only if $ \alpha_i$ is an eastward step. In fact, the function $\labA$ induces a bijection between $(a,b)$-lattice paths and subsets of $[a+b]$ of cardinality $b$. Combinatorially, this corresponds to labeling every step of the lattice path from $1$ to $a + b$ and then recording the labels of the eastward steps.

The \textbf{type $A$ Gale order} on subsets $\binom{[n]}{k}$ is the partial order given by $A \leq B$ whenever $a_i \leq b_i$ for all $i$, where $A = \{a_1 < a_2 < \cdots a_k\}$ and $B = \{b_1 < b_2 < \cdots < b_k\}$. The bijection $\labA$ gives a poset isomorphism between the set of $(a,b)$ lattice paths with the lattice path partial order and the (type $A$) Gale order on $\binom{[n]}{b}$.

\begin{definition}
    Let $p$ and $q$ be two $(a,b)$-lattice paths corresponding to the subsets $S,T \subseteq [a+b]$ of cardinality $a$ with $p \leq q$ and $S \leq T$. Then, the \textbf{lattice path matroid} $M[p,q]$ or $M[S,T]$ is the rank $a$ matroid on ground set $[a+b]$ whose bases are the elements in the interval $[S,T]$.
\end{definition}

The utility of this theory is that the combinatorics of lattice paths can be used to describe properties of the corresponding matroid. See for instance \cite{Bonin2003, Bonin2006, Bonin2010, BK22}. We now describe a lattice model for the delta matroids corresponding to intervals in the type $C_n$ Gale order.

\begin{definition}
    A \textbf{$n$-symmetric lattice path} $p$ is an $(n,n)$-lattice path that is invariant under the reflection through the line $y = n - x$. In other words, $p = \alpha_1 \ldots \alpha_{2n}$ satisfies $\alpha_i \not = \alpha_{2n - i + 1}$ for all $i \leq n$.
\end{definition}

The \textbf{label} $\labL(p)$ of an $n$-symmetric lattice path $p = \alpha_1 \ldots \alpha_{2n}$ is the admissible subset of $[\pm n]$ given by 
 \[ i \in \labL(p) \iff \begin{cases}
     \alpha_{i + n} = E & \text{for $i \in [n]$} \\
     \alpha_{i + n +1} = E & \text{for $i \in [-n]$}.
 \end{cases}\]
 
 Combinatorially, we obtain $\labL(p)$ by labeling the steps of $p$ in order by the set $[\pm n]$ and then recording the eastward steps.

\begin{proposition}\label{prop: lagrangian paths are admissible subsets}
    The function $\labL$ is a poset isomorphism between $n$-symmetric lattice paths with $p \leq q$ if $p$ is always below $q$ and the type $C_n$ Gale order on admissible subsets of $[\pm n]$.
\end{proposition}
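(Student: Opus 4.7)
The plan is to establish three things: (i) $\labL(p)$ is an admissible subset of $[\pm n]$ of cardinality $n$ for every symmetric $p$, (ii) $\labL$ is a bijection, and (iii) the two orders correspond under $\labL$.

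For (i) and (ii), the crux is that the step-to-label assignment is an order-preserving bijection $[2n] \to [\pm n]$: step $k$ receives label $k-n-1$ when $k \le n$ and label $k-n$ when $k > n$. A direct computation shows that step $i$ and step $2n-i+1$ always receive labels that are negatives of each other (for $i = 1$ the pair of labels is $-n, n$; for $i = n$ it is $-1, 1$; and so on). Thus the symmetry condition $\alpha_i \ne \alpha_{2n-i+1}$ translates exactly into the requirement that $\labL(p)$ contain precisely one element of each pair $\{-k, k\}$, which simultaneously makes it admissible and of cardinality $n$. The inverse map is then immediate: for an admissible $S \subseteq [\pm n]$ of size $n$, declare step $k$ eastward iff its label lies in $S$; admissibility of $S$ forces the resulting path to be symmetric.

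For (iii) I would combine the subword lemma quoted above with a tail-counting reformulation of the type $C_n$ Gale order on admissible subsets. By the subword lemma, $p \le q$ is equivalent to
\[|\{j \ge k : \alpha_j = E\}| \le |\{j \ge k : \beta_j = E\}| \qquad \text{for every } k \in [2n].\]
Transporting these counts along the order-preserving step-to-label bijection rewrites the condition as $|\labL(p)_{\ge \ell}| \le |\labL(q)_{\ge \ell}|$ for every $\ell \in [\pm n]$. A direct adaptation of Lemma \ref{lem: Gale order and truncated sets} to admissible sets (if $A = \{a_1 < \cdots < a_n\}$ and $B = \{b_1 < \cdots < b_n\}$ are admissible of cardinality $n$, then $a_i \le b_i$ for all $i$ iff $|A_{\ge \ell}| \le |B_{\ge \ell}|$ for all $\ell$) then identifies this tail condition with the definition of the type $C_n$ Gale order on admissible subsets, yielding both implications at once.

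The only real obstacle is the bookkeeping between step positions in $[2n]$ and labels in $[\pm n]$, especially verifying that the symmetry pair $(i, 2n-i+1)$ maps to the sign pair $(-k, k)$ and that the labeling is order-preserving across the $n/n{+}1$ boundary. Once that is in place, the bijection, admissibility, and order-equivalence all follow formally from the subword lemma together with the tail-counting characterization of Gale order.
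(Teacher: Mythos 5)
Your proposal is correct and follows essentially the same route as the paper: admissibility and bijectivity of $\labL$ come from the pairing of steps $i$ and $2n-i+1$ under the labeling, and the order-isomorphism comes from the suffix-count description of ``weakly below'' combined with the tail-count characterization of the Gale order as in Lemma \ref{lem: Gale order and truncated sets}. The only cosmetic difference is that you compare tail counts directly over $[\pm n]$ via an adapted version of that lemma, whereas the paper uses the symmetry of the paths to restrict to suffixes starting at $k \geq n+1$ and applies the lemma to the positive parts $S \cap [n]$ and $T \cap [n]$.
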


\begin{proof}
    For any $n$-symmetric lattice path $p=\alpha_1\ldots \alpha_{2n}$, we have that $\labL(p)$ is an $n$-admissible subset of $[\pm n]$ since the symmetry condition implies that $\alpha_{i} \not = \alpha_{n+i}$ and hence exactly one of $i$ and $-i$ is in $\labL(p)$ for all $i \in [n]$. Further, every $n$-admissible subset appears in this way since both sets have size $2^n$. In particular, $p$ is determined uniquely by $\alpha_{1} \ldots \alpha_n$, which can be freely chosen.

    Suppose that $S$ and $T$ are two admissible subsets of $[\pm n]$ of cardinality $n$ such that $S \leq T$ with corresponding symmetric lattice paths $p$ and $q$. To show that this function is a poset isomorphism, we need to show that $p$ is weakly below $q$. 

    Let $p = \alpha_1 \cdots \alpha_{2n}$ and $q = \beta_1 \cdots \beta_{2n}$. Then, we need to show that the number of eastward steps in $\alpha_k \cdots \alpha_{2n}$ is smaller than the number of eastward steps in $\beta_k \cdots \beta_{2n}$. By symmetry, it suffices to check this for $k \geq n+1$. Then, the number of eastward steps in $p$ and $q$ starting at $k$ is the same as the integers $|A \cap \{k, \ldots, n\}|$ and $|B \cap \{k,\ldots, n\}|$, where $A = S \cap [n]$ and $B = T \cap [n]$. By Lemma \ref{lem: Gale order and truncated sets}, this is equivalent to $A \leq B$ in the type $C_n$ Gale order, which is in turn equivalent to $S \leq T$.
\end{proof}

\begin{definition}
    Let $p$ and $q$ be two $n$-symmetric lattice paths. The \textbf{lattice path delta matroid} $\Delta[p,q]$ is the lattice path matroid whose feasible sets are $\labL(r) \cap [n]$ for every symmetric lattice path $r$ in the interval $[p,q]$. Likewise, the corresponding Lagrangian matroid $\mathcal{L}[p,q]$ is the Lagrangian matroid whose bases are $\labL(r)$ for $p \leq r \leq q$.
\end{definition}

\begin{figure}[h!]
\centering
\scalebox{0.9}{
\begin{tikzpicture}[inner sep=0in,outer sep=0in]
\node (n) {\begin{varwidth}{5cm}{
\ydiagram{5,5,5,5,5}}\end{varwidth}};
\draw[very thick,blue] (n.south west)--([xshift=0.8cm]n.south west)--([xshift=\boxs,yshift=0.5*\boxs]n.west)--([xshift=-3*\boxs - 0.5,yshift=0.5*\boxs]n.east)--([xshift=-3*\boxs-0.5,yshift = 1.5*\boxs + 0.5]n.east)--([yshift = -1*\boxs]n.north east) -- (n.north east);
\draw[very thick, black] (n.north west) -- (n.south east);
\end{tikzpicture}
}
\caption{Symmetric lattice path and delta lattice path of the admissible subset $\overline{5}\overline{1} 2 3 4$.}
\end{figure}

Given an $(a,b)$-lattice path $p$, we let $\lambda(p)$ be the Young diagram below the path $p$ contained in the box $[0,a] \times [0,b]$. If $p$ is a symmetric lattice path, then $\lambda(p)$ is a Young diagram which is symmetric along the line $y = n - x$. In other words, it is a self-conjugate partition. We will often draw the lattice path matroid $\Delta[p,q]$ by its symmetric skew-diagram $\lambda(q)/ \lambda(p)$.

It will sometimes be convenient to study lattice path delta matroids on ground sets other than $[n]$. For this, let $E$ be a finite set with a given total order $\leq$. Let $E^{\pm}$ denote the set $E \sqcup \overline{E}$ where $\overline{E} = \{ \overline{e} \; \lvert \; e \in E\}$. We endow $E^{\pm}$ with the total order determined by $\overline{e} \leq e'$ for all $e,e' \in E$ and $\overline{e} \leq \overline{e}'$ if and only if $e' \leq e$ for all $e,e' \in E$.

Suppose $E^{\pm} = \{e_{i_1} < \cdots < e_{i_{2n}}\}$. Given a symmetric lattice path $p = \alpha_{1} \cdots \alpha_{2n}$ in $[0,n]^2$, we can obtain an admissible subset $S$ of $E^{\pm}$ by labeling the steps by $E^{\pm}$ using the induced order and then recording the eastward steps. Then, a pair of symmetric lattice paths gives a Lagrangian matroid on ground set $E^{\pm}$ whose bases are the labels of all symmetric paths $p \leq r \leq q$. The corresponding delta matroid is the set of subsets of $E$ obtained by $B \mapsto B \cap E$ for $B \in \mathcal{B}(\mathcal{L})$.

\subsection{Type C Catalan Matroid}

One of the first studied lattice path matroids was the Catalan matroid as defined by Ardila in \cite{Ardila2003}. Recall that an $n$-\textbf{Dyck path} is a $(n,n)$-lattice path such that the path is weakly below than the line $y = x$. It is well-known that the number of $n$-Dyck paths is the famous Catalan number $C_n = \frac{1}{n+1} \binom{2n}{n}$. The \textbf{Catalan matroid} is the lattice path matroid on ground set $[2n]$ of rank $n$ given by $M[p,q]$, where $p$ is the bottom path $E^nN^n$ and $q$ is the staircase path $NENE \ldots NE$. The bases are then exactly the Dyck paths. Ardila showed that this matroid satisfies various nice properties and has many numerical invariants related to Catalan combinatorics. We now discuss a type $C_n$ version of this statement.

The \textbf{type $C_n$ Catalan numbers} are the central binomial coefficients $\binom{2n}{n}$. These enumerate various different invariants of the Coxeter group $C_n$ including the number of ideals in the root poset of type $C_n$ and non-crossing partitions of type $C_n$, see \cite{Reiner1997}. Stump showed\footnote{His construction only considers the truncated part of the symmetric path contained below the main diagonal but this completely determines the symmetric path.} that this number also counts the number of Dyck paths from $(0,0)$ to $(2n,2n)$ that are symmetric along the line $y = 2n - x$ \cite{StumpDyck}. We refer to such a path as a \textbf{symmetric Dyck path}.

\begin{definition}\label{def: Type B Catalan Matroid}
    Let $p$ be the bottom bounding path of the box $[0,2n]^2$ and $q$ be the staircase path $(NE)^{2n}$. The \textbf{type $C_n$ Catalan matroid} is the lattice path delta matroid $\Delta[p,q]$.
\end{definition}

Since the feasible sets of this matroid are in bijection with symmetric Dyck paths of length $4n$, we have that the number of feasible sets is the $C_n$-Catalan number.

\subsection{Representability and the Lagrangian Grassmannian}

Let $V$ be a vector space over a field $\FF$ with basis $\{e_1,e_2, \ldots, e_n, e_{\overline{1}}, \ldots, e_{\overline{n}}\}$ and with an anti-symmetric bilinear form $\langle -,- \rangle$.
A subspace $U \subseteq V$ is an \textbf{Lagrangian subspace} of $V$ if $\langle u,v \rangle = 0$ for all $u,v \in U$ and $\dim U = n$. The \textbf{Lagrangian Grassmannian} $\operatorname{LGr}_{n,2n}(\FF)$ is the space defined as the set of all Lagrangian subspaces of $V$.

We can represent each $U \in \operatorname{LGr}_{n,2n}(\FF)$ as an $n \times 2n$ matrix $M$ with rowspace equal to $U$. Let $A$ and $B$ be the submatrices of $M$ obtained by restricting $M$ to the columns $1, \ldots, n$ and $n+1, \ldots , 2n$, respectively. Then a matrix $M$ has a Lagrangian subspace as its rowspace if and only if it is full-rank and $AB^T$ is symmetric.

Label the columns of $M$ in order by $1,2,\ldots,n,\overline{n},\ldots,\overline{1}$. 

\begin{definition}
    For any $U \in \operatorname{LGr}_{n,2n}(\FF)$ with a representing matrix $M$, we say that the Lagrangian matroid $\mathcal{L}(U)$ of $U$ is the Lagrangian matroid whose bases are the admissible subsets $S \subseteq [\pm n]$ of size $n$ such that the minor $\det(M_S)$ of $M$ obtained by restricting to the columns labeled by $S$ is non-zero.

    We say that a Lagrangian matroid $\mathcal{L}$ is \textbf{representable} over $\FF$ if there exists an $U \in \operatorname{LGr}_{n,2n}(\FF)$ such that $\mathcal{L} = \mathcal{L}(U)$.
\end{definition}

Given an admissible subset $A$, let \textbf{the Schubert variety} $\Omega_A$ associated to $A$ be the closure of the set of $U \in \operatorname{LGr}_{n,2n}$ such that the smallest non-vanishing minor $M_S$ of an admissible subset $S$ is $A$ (under the type $C_n$ Gale order). The \textbf{opposite Schubert variety} $\Omega^A$ is likewise defined by insisting that $M_A$ is the largest non-vanishing minor. Given $A \leq B$ admissible subsets, the \textbf{Richardson variety} $X_A^B$ is the intersection $\Omega_A \cap \Omega^B$.

It is a well-known fact that a generic point in the Richardson variety $U \in X_A^B$ has $\mathcal{L}(U) = \mathcal{L}[A,B]$. See \cite{Tsukerman2015} for a general description for flag varieties $G/P$. Therefore, all of our lattice path matroids are representable. Under the identification $\operatorname{LGr}_{n,2n}(\RR) \cong U(n)/O(n)$, $P(\mathcal{L}[A,B])$ is the image of the moment map of $X_A^B$ corresponding to the action of the maximal torus of $U(n)$.

\section{First Properties and Operations}\label{Sec: First Properties}

\subsection{Operations on Lattice Path Delta Matroids}

Matroids have many important operations which have been shown to preserve the class of lattice path matroids \cite{Bonin2003}. We now prove similar results for operations on delta matroids. 

\begin{definition}
Let $\Delta$ be a delta matroid on $[n]$.
\begin{itemize}

    \item The \textbf{deletion} of $\Delta$ by the subset $T \subseteq [n]$ is the delta matroid $\Delta \backslash T$ given by 
     \[\mathcal{F}(\Delta\backslash T) = \{B \in \mathcal{F}(\Delta) \; \lvert \; B \cap T = \emptyset\} \]
     
     \item The \textbf{contraction} of $\Delta$ by the subset $T \subseteq [n]$ is the delta matroid $\Delta /T$ on $[n] \backslash T$ with feasible sets
     \[ \mathcal{F}(\Delta/T)=\{ B \backslash T \; \lvert \; B \in \mathcal{F}(\Delta), B \supseteq T \}. \]

     \item The \textbf{direct sum} of two delta matroids $\Delta_1$ and $\Delta_2$ on ground sets $E_1$ and $E_2$, respectively, is the delta matroid with feasible sets
      \[\mathcal{F}(\Delta_1 \oplus \Delta_2) = \{B_1 \cup B_2 \; \lvert \; B_1 \in \mathcal{F}(\Delta_1), B_2 \in \mathcal{F}(\Delta_2)\}. \]

    \item The \textbf{dual} of $\Delta$ is the delta matroid $\Delta^*$ with feasible sets
     \[ \mathcal{F}(\Delta^*) = \{ [n] \backslash B \; \lvert \; B \in \mathcal{F}(\Delta)\}.\]

\end{itemize}
\end{definition}

We have the usual duality relation between deletion and contraction $(\Delta \backslash \ell)^* = \Delta^* /\ell$.

A \textbf{loop} of a delta matroid is an element $i \in [n]$ that is not in any feasible set, and a \textbf{coloop} is an element $i \in [n]$ that is in every feasible set.

\begin{proposition}
 Let $\Delta[S,T]$ be a lattice path delta matroid. Then, the dual $\Delta[S,T]^*$ is the lattice path delta matroid $\Delta[[n] \backslash T, [n] \backslash S]$.    
\end{proposition}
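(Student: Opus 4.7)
The plan is to show directly that the complementation map $A \mapsto [n] \setminus A$ is an order-reversing involution of $2^{[n]}$ under the type $C_n$ Gale order. Once established, this immediately gives a bijection between the interval $[S,T]$ and the interval $[[n]\setminus T, [n] \setminus S]$, matching $\mathcal{F}(\Delta[S,T]^*) = \{[n]\setminus A : S \leq A \leq T\}$ with $\mathcal{F}(\Delta[[n]\setminus T,[n]\setminus S])$.

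The key computation uses Lemma \ref{lem: Gale order and truncated sets}, which characterizes the type $C_n$ Gale order entirely in terms of the sizes $|S_{\geq i}|$. I would observe that for any $S \subseteq [n]$ and any $i \in [n]$, the elements of $[n]$ that are at least $i$ are partitioned between $S$ and its complement, giving
\[ |([n] \setminus S)_{\geq i}| = (n - i + 1) - |S_{\geq i}|. \]
Applied to two subsets $A, B \subseteq [n]$, this yields $|A_{\geq i}| \leq |B_{\geq i}|$ for all $i$ if and only if $|([n]\setminus B)_{\geq i}| \leq |([n] \setminus A)_{\geq i}|$ for all $i$. By Lemma \ref{lem: Gale order and truncated sets}, this translates exactly to $A \leq B \iff [n]\setminus B \leq [n]\setminus A$, so complementation reverses the Gale order.

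With this in hand, the proposition is immediate: $A \in \mathcal{F}(\Delta[S,T])$ means $S \leq A \leq T$, which by order reversal is equivalent to $[n] \setminus T \leq [n] \setminus A \leq [n] \setminus S$, i.e.\ $[n] \setminus A \in \mathcal{F}(\Delta[[n]\setminus T, [n]\setminus S])$. Since $\mathcal{F}(\Delta[S,T]^*)$ is by definition the image of $\mathcal{F}(\Delta[S,T])$ under complementation, the two feasible collections agree.

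I do not expect any serious obstacle here: the whole argument reduces to the truncated-size reformulation already provided, and the complementation identity is a one-line count. The only thing to be careful about is that one really does want to invoke Lemma \ref{lem: Gale order and truncated sets} rather than the original definition with ordered elements, since the direct definition involves subsets of possibly different cardinalities and the indexing becomes awkward under complementation.
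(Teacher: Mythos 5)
Your proposal is correct and follows the same route as the paper: the paper's proof simply asserts that complementation is an order-reversing bijection on subsets of $[n]$ and concludes, while you additionally verify that assertion via the identity $|([n]\setminus S)_{\geq i}| = (n-i+1)-|S_{\geq i}|$ together with the truncated-size characterization of the Gale order. No gaps; your version is just a more explicit form of the paper's argument.
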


\begin{proof}
    It is clear that the map $A \mapsto [n] \backslash A$ is an order-reversing bijection from $[n]$ to $[n]$. Therefore, if $S \leq A \leq T$, then $[n] \backslash T \leq [n] \backslash A \leq [n] \backslash S$.
\end{proof}

In terms of symmetric lattice paths, duality corresponds to reflecting the path along the main anti-diagonal $y = x$. We now turn to deletion and contraction.

\begin{lemma}
\label{lem: alternate coloop def}
    Let $\Delta[S,T]$ be a lattice path delta matroid corresponding to the subsets $S,T \subseteq [n]$ and $i \in [n]$.
    Then $\ell$ is a coloop of $\Delta[S,T]$ if and only if $\ell \in S \cap T$ and $|S_{\geq \ell+1}|=|T_{\geq \ell+1}|$, or equivalently,  $|S_{\geq \ell}| = |T_{\geq \ell}| = |S_{\geq \ell + 1}| + 1 = |S_{\geq \ell + 1}| + 1$.
\end{lemma}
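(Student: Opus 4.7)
The plan is to translate the coloop condition into inequalities on truncated sets via Lemma \ref{lem: Gale order and truncated sets}, which says $A \leq B$ in the Gale order iff $|A_{\geq i}| \leq |B_{\geq i}|$ for every $i$. The feasible sets of $\Delta[S,T]$ are exactly the interval $[S,T]$ in Gale order, so $\ell$ is a coloop iff $\ell \in A$ for every $A$ with $S \leq A \leq T$; membership $\ell \in X$ in turn translates to $|X_{\geq \ell}| = |X_{\geq \ell+1}|+1$. Applied to $X = S$ and $X = T$, this identity makes the equivalence of the two stated forms of the coloop condition immediate, so the substantive work is to prove: $\ell$ is a coloop iff $\ell \in S \cap T$ and $|S_{\geq \ell+1}| = |T_{\geq \ell+1}|$.

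For the reverse direction, assume $\ell \in S \cap T$ and $|S_{\geq \ell+1}| = |T_{\geq \ell+1}|$. For any $A \in [S,T]$, combining $|A_{\geq \ell}| \geq |S_{\geq \ell}| = |S_{\geq \ell+1}|+1$ (using $\ell \in S$) with $|A_{\geq \ell+1}| \leq |T_{\geq \ell+1}| = |S_{\geq \ell+1}|$ gives $|A_{\geq \ell}| - |A_{\geq \ell+1}| \geq 1$, which forces $\ell \in A$.

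For the forward direction, $S$ and $T$ are themselves feasible so $\ell \in S \cap T$, and the only content is ruling out $|S_{\geq \ell+1}| < |T_{\geq \ell+1}|$. Supposing this inequality held, I would construct $A = (S \setminus \{\ell\}) \cup \{m\} \in [S,T]$ with $\ell \notin A$ for an appropriate $m > \ell$, contradicting that $\ell$ is a coloop. To locate $m$, consider the difference $f(i) := |T_{\geq i}| - |S_{\geq i}|$: the increments $f(i+1) - f(i) = [i \in S] - [i \in T]$ show that $f$ can only decrease on $T \setminus S$. Since $f(\ell+1) \geq 1$ while $f(n+1) = 0$, the largest $m$ with $f(i) \geq 1$ throughout $\ell+1 \leq i \leq m$ must satisfy $m \in T \setminus S$. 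Verifying $S \leq A \leq T$ then splits into the three cases $i \leq \ell$, $\ell < i \leq m$, $i > m$, and is immediate from the choice of $m$ using Lemma \ref{lem: Gale order and truncated sets}.

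The main (modest) obstacle is finding this $m$: it must simultaneously lie outside $S$ (so the swap is legal and the new set stays above $S$ in Gale order) and keep $|S_{\geq i}|+1 \leq |T_{\geq i}|$ valid throughout $(\ell, m]$ (so the new set stays below $T$). The difference-function trick accomplishes both at once, after which the case analysis is mechanical.
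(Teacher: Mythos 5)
Your proposal is correct, and while the easy direction coincides with the paper's argument (both squeeze $|A_{\geq \ell}|$ and $|A_{\geq \ell+1}|$ between the matching values for $S$ and $T$), your forward direction is genuinely different. The paper argues by contrapositive through a maximal chain $S = S^0 \lessdot \cdots \lessdot S^m = T$: if $|S_{\geq \ell+1}| < |T_{\geq \ell+1}|$, some cover step must increment the truncated count, and by the cover-relation characterization from Lemma~\ref{lem: rank calculation of Gale order on subsets} that step necessarily replaces $\ell$ by $\ell+1$, producing an $S^{i+1}$ omitting $\ell$. You instead construct an explicit witness $A = (S \setminus \{\ell\}) \cup \{m\}$ in one stroke, locating $m$ via the discrete difference function $f(i) = |T_{\geq i}| - |S_{\geq i}|$; the observation that $f$ can only drop on $T \setminus S$ and must reach $0$ at $n+1$ pins down $m \in T \setminus S$ and simultaneously certifies $|S_{\geq i}| + 1 \leq |T_{\geq i}|$ on $(\ell, m]$, which is exactly what the verification via Lemma~\ref{lem: Gale order and truncated sets} needs. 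Your route buys independence from the rank-function lemma and its cover-relation description (it only needs the truncated-set criterion for the Gale order), and it hands over a concrete non-feasible witness rather than an existential one; the paper's route is shorter to state because it piggybacks on the already-established poset structure. Both are sound; the one small thing worth making explicit in a writeup is that $f(m) \geq 1$ and $f(m+1) = 0$ together with $|f(m) - f(m+1)| \leq 1$ force $f(m) = 1$ and $m \in T \setminus S$, and in particular $m \notin S$, which is needed for the count $|A_{\geq i}| = |S_{\geq i}| + 1$ on $(\ell, m]$ to be correct.
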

\begin{proof}
    Let $\ell\in S \cap T$ and $|S_{\geq {\ell+1}}|=|T_{\geq {\ell+1}}|=a$. Then $|S_{\geq \ell}|=|T_{\geq \ell}|=a+1$.
    By Lemma \ref{lem: Gale order and truncated sets}, for any $R \in [S,T]$, since $S\leq R\leq T$, we must have $|R_{\geq \ell}|=a+1$ and $|R_{\geq \ell+1}|=a$, forcing $\ell \in R$.  Therefore, $\ell$ is a coloop.

    We prove the other direction through its contrapositive. Consider a maximal chain
     \[ S = S^0 \lessdot S^1 \lessdot \cdots \lessdot S^m = T.\]
    By Lemma \ref{lem: rank calculation of Gale order on subsets}, we know that $A \lessdot B$ if and only if there is some $i$ such that $i \in A$, $i+1 \not \in A$, and $B = A \backslash \{i\} \cup \{i+1\}$. Hence at each step, there is an element $a$ such that $S^{i+1} = S^{i} \backslash \{a\} \cup \{a +1\}$.

    Clearly if $\ell \not \in S \cap T$, then $\ell$ is not a coloop. Suppose we have $|S_{\geq \ell +1}| < |T_{\geq \ell}|$. Since $|S^0_{\geq \ell +1}| < |S^m_{\geq \ell +1}|$, we have that there must exist some $i$ such that
     \[|S^{i}_{\geq \ell +1}| < |S^{i+1}_{\geq \ell +1}|. \]
    At this step, $S^{i+1} = S^{i} \cup \{\ell+1\} \backslash \{\ell\}$. This implies that $\ell \not \in S^{i+1}$ and hence $\ell$ is not a coloop.

\end{proof}

\begin{proposition} \label{prop: deletion }
    Let $\Delta[S,T]$ be a lattice path delta matroid corresponding to the subsets $S,T \subseteq [n]$. Let $\ell \in [n]$ be an element that is not a coloop of $\Delta[S,T]$. 

    Let $A = S$ if $\ell \not \in S$ and
     \[A = S \backslash \ell \cup \min(i \not \in S \;\lvert \; \ell < i) \]
    otherwise. Likewise, let $B = T$ if $\ell \not \in T$ and
     \[B= T \backslash \ell \cup \max(i \not \in T \; \lvert \; i < \ell) \]
    otherwise.
    
    Then, $\Delta[S,T] \backslash \ell$ is the lattice path delta matroid $\Delta[A,B]$ on ground set $[n] \backslash \ell$.
\end{proposition}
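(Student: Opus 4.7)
The plan is to translate the claimed equality of delta matroids into a statement about Gale-order inequalities via Lemma \ref{lem: Gale order and truncated sets} and verify it set-theoretically. Observe that
\[\mathcal{F}(\Delta[S,T] \backslash \ell) = \{R \in [S,T] \; \lvert \; \ell \notin R\},\]
and we aim to show this equals $[A,B]$, the Gale-order interval on the totally ordered set $[n] \backslash \{\ell\}$. Since $\ell \notin A$ and $\ell \notin B$, any $R \in [A,B]$ automatically satisfies $\ell \notin R$.

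The first computation is to relate $|A_{\geq i}|$ and $|B_{\geq i}|$ to $|S_{\geq i}|$ and $|T_{\geq i}|$. If $\ell \in S$, let $m = \min\{i \notin S \; \lvert \; i > \ell\}$, which exists since otherwise $\{\ell, \ldots, n\} \subseteq S$ would force $\{\ell, \ldots, n\} \subseteq T$ and hence $\ell$ to be a coloop by Lemma \ref{lem: alternate coloop def}. A direct check shows that $|A_{\geq i}| = |S_{\geq i}|$ except on the range $\ell + 1 \leq i \leq m$, where $|A_{\geq i}| = |S_{\geq i}| + 1$. Dually, if $\ell \in T$ and $M = \max\{i \notin T \; \lvert \; i < \ell\}$ (interpreted as $M = 0$ when the set is empty, in which case we take $B = T \backslash \{\ell\}$), then $|B_{\geq i}| = |T_{\geq i}| - 1$ on $M + 1 \leq i \leq \ell$ and equals $|T_{\geq i}|$ otherwise. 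Because these modifications only sharpen the Gale-order bounds, the inclusion $[A,B] \subseteq \mathcal{F}(\Delta[S,T] \backslash \ell)$ follows at once from Lemma \ref{lem: Gale order and truncated sets}.

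For the reverse inclusion, take any $R \subseteq [n] \backslash \{\ell\}$ with $R \in [S,T]$; it suffices to verify the sharpened bounds on the two critical ranges. Since $\ell \notin R$ while $\ell \in S$, we have $|R_{\geq \ell+1}| = |R_{\geq \ell}| \geq |S_{\geq \ell}| = |S_{\geq \ell+1}| + 1$. Since $\{\ell+1, \ldots, m-1\} \subseteq S$, for any $i$ with $\ell + 1 \leq i \leq m$ one telescopes
\[|R_{\geq i}| \geq |R_{\geq \ell+1}| - (i - 1 - \ell) \geq |S_{\geq i}| + 1 = |A_{\geq i}|.\]
A dual computation using $|R_{\geq i}| \leq |R_{\geq \ell+1}| + (\ell - i)$ (valid because $\ell \notin R$), together with $|R_{\geq \ell+1}| \leq |T_{\geq \ell+1}|$ and $\{M+1, \ldots, \ell\} \subseteq T$, yields $|R_{\geq i}| \leq |T_{\geq i}| - 1 = |B_{\geq i}|$ on $M + 1 \leq i \leq \ell$. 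Note that the two critical ranges are disjoint, so $|A_{\geq i}| \leq |B_{\geq i}|$ for every $i$ and in particular $A \leq B$ in the Gale order on $[n] \backslash \{\ell\}$.

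I expect the main obstacle to be the bookkeeping in these telescoping inequalities: one must carefully distinguish $i = \ell$ from $i = \ell+1$, check that the minimum defining $A$ exists exactly when $\ell$ is not a coloop, and handle the edge case in which $\{1, \ldots, \ell\} \subseteq T$ so the defining maximum for $B$ is over an empty set. Once these ranges are handled, the equality $\mathcal{F}(\Delta[S,T] \backslash \ell) = [A,B]$ follows, completing the proof.
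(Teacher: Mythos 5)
Your proof takes a genuinely different (and in one respect more complete) route than the paper's. The paper's proof establishes three facts via Lemma~\ref{lem: Gale order and truncated sets}: $S\leq A$, $B\leq T$, and $A\leq B$ (the latter requiring a careful argument that the not-a-coloop hypothesis forces the strict inequalities $|S_{\geq \ell}|<|T_{\geq\ell}|$ when $\ell\in T$ and $|S_{\geq\ell+1}|<|T_{\geq\ell+1}|$ when $\ell\in S$, via Lemma~\ref{lem: alternate coloop def}). This shows $[A,B]$ is a well-defined interval sitting inside $[S,T]$, but the paper never explicitly verifies the reverse containment $\{R\in[S,T]:\ell\notin R\}\subseteq[A,B]$. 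You do verify this, by the telescoping inequalities $|R_{\geq i}|\geq |R_{\geq\ell+1}|-(i-1-\ell)\geq |S_{\geq i}|+1=|A_{\geq i}|$ on the range $\ell+1\leq i\leq m$ and its dual, which shows $A$ (resp.\ $B$) is the Gale-minimum (resp.\ maximum) over $R\subseteq[n]\setminus\{\ell\}$ with $S\leq R$ (resp.\ $R\leq T$). You also flag and handle the edge case where the maximum defining $B$ is over an empty set, which the paper's formula as stated does not cover.

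There is one gap in your argument, though it is repairable from the rest of what you wrote. The claim ``the two critical ranges are disjoint, so $|A_{\geq i}|\leq|B_{\geq i}|$ for every $i$'' does not follow from disjointness alone: for $i$ in the first range you have $|A_{\geq i}|=|S_{\geq i}|+1$ and $|B_{\geq i}|=|T_{\geq i}|$, so you need the strict inequality $|S_{\geq i}|<|T_{\geq i}|$, and similarly for the second range. This is precisely what the paper establishes from the not-a-coloop hypothesis, and it is not free. Your proof can be fixed without this computation: since $\ell$ is not a coloop, there exists $R\in[S,T]$ with $\ell\notin R$, and your reverse-inclusion bounds then give $|A_{\geq i}|\leq|R_{\geq i}|\leq|B_{\geq i}|$ for all $i$, hence $A\leq B$. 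Alternatively, you can import the paper's strictness claims. Either way you should state the reasoning explicitly rather than attributing it to disjointness.
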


\begin{proof} 
    Note first the inequality $|S_{\geq i}| \leq |S_{\geq {i+j}}| + j$ holds for all $i \in [n]$ and $j \in [n-i]$. 
    We claim that if $\ell$ is not a coloop, then we have the following strict inequalities:
    \[
    \begin{cases}
        |S_{\geq \ell}| < |T_{\geq \ell}| & \text{if} \; \ell \in T \\
        |S_{\geq {\ell+1}}| < |T_{\geq {\ell + 1}}| & \text{if} \; \ell \in S.
    \end{cases}
    \]

    To show the first inequality, assume that $|S_{\geq \ell}| = |T_{\geq \ell}|$ and $\ell \in T$. 
    Then $|S_{\geq \ell +1}| \geq |S_{\geq \ell}| - 1 = |T_{\geq \ell}| - 1= |T_{\geq \ell + 1}|$. But Lemma \ref{lem: Gale order and truncated sets} ensures $|S_{\geq \ell + 1}| \leq |T_{\geq \ell + 1}|$, so $|S_{\geq \ell + 1}|=|T_{\geq \ell+1}|=|S_{\geq \ell}|-1$ and $\ell \in S \cap T$.
    By Lemma \ref{lem: alternate coloop def}, $\ell$ is a coloop. 
    
    Similarly, for the second inequality, assume that $|S_{\geq \ell + 1}| = |T_{\geq \ell + 1}|$ and $\ell \in S$. We have $|S_{\geq \ell}| = |S_{\geq \ell + 1}| + 1 = |T_{\geq \ell + 1}| + 1 \geq |T_{\geq \ell}|$. But $|S_{\geq \ell}| \leq |T_{\geq \ell}|$, so it again follows from Lemma \ref{lem: alternate coloop def} that $\ell$ is a coloop.
    
    If $\ell \in S$, then let $\alpha = \min \{ a \geq 0 \; \lvert \; \ell + a \in S, \; \ell + a + 1 \not\in S\}$, so that $A = S \backslash \ell \cup (\ell + \alpha + 1)$.
    For any $i \in [n]$, we have 
    \[ |A_{\geq i}| = 
    \begin{cases}
        |S_{\geq i}| + 1 & \text{if } \ell+1\leq i\leq \ell+\alpha+1 \\
        |S_{\geq i}| & \text{otherwise}.
    \end{cases} \] 
    
    Similarly, if $\ell \in T$, then let $\beta = \min \{ b \geq 0 \; \lvert \; \ell - b \in T, \; \ell - b - 1 \not\in T\}$, so that $B = T \backslash \ell \cup (\ell - \beta - 1)$.
    For any $i \in [n]$, we have
    \[ |B_{\geq i}| = 
    \begin{cases}
        |T_{\geq i}| - 1 & \text{if } \ell - \beta \leq i \leq \ell \\
        |T_{\geq i}| & \text{otherwise}.
    \end{cases} \]
    
    Then Lemma \ref{lem: Gale order and truncated sets} implies that $A \geq S$ and $B \leq T$.
    We now proceed to show that $|A_{\geq i}| \leq |B_{\geq i}|$, which yields $A\leq B$ via Lemma \ref{lem: Gale order and truncated sets}.

    It suffices to show $A \leq B$ for $i \in [\ell + 1, \ell + \alpha + 1]$ and for $i \in [\ell - \beta, \ell]$ when $\alpha$ and $\beta$ are defined. When $\ell \in S$ and $i \in [\ell + 1, \ell + \alpha + 1]$, we have $|S_{\geq \ell + 1}| < |T_{\geq \ell + 1}|$, and $\{i, i + 1, \dots, \ell + \alpha\} \subseteq S$ by the definition of $\alpha$. 
    Then $|T_{\geq \ell+1}|\geq |S_{\geq \ell+1}|+1=|S_{\geq i}|+i-\ell=|S_{\geq i-1}|+i-\ell-1$ since $i-1 \in S$.
    Thus, $|S_{\geq {i-1}}| \leq |T_{\geq \ell+1}|-(i-\ell-1) \leq |T_{\geq i}|$.
    We then have $|S_{\geq i-1}|=|S_{\geq i}|+1=|A_{\geq i}| \leq |T_{\geq i}|=|B_{\geq i}|$.
    
    Next, when $\ell \in T$ and $i \in [\ell - \beta, \ell]$, we have $|S_{\geq \ell}| < |T_{\geq \ell}|$, and $\{i,i+1,\dots,\ell-1\} \subseteq B$ by the definition of $\beta$, so $|B_{\geq i}|=|B_{\geq \ell}|+\ell - i$.
    Since $|B_{\geq \ell}| = |T_{\geq \ell}| - 1$, we then have $|B_{\geq \ell}| \geq |S_{\geq \ell}|$. 
    Thus, $|B_{\geq i}| \geq |S_{\geq \ell}|+\ell - i \geq |S_{\geq i}| = |A_{\geq i}|$.
    
    Therefore, we conclude that $A \leq B$.
\end{proof}

We also have the dual statement.

\begin{proposition}
    Let $\Delta[S,T]$ be a lattice path delta matroid. Let $\ell \in [n]$ be an element that is not a loop. 

    Let $A = S\backslash \ell$ if $\ell \in S$ and
     \[A = S \backslash \max(i \in S \;\lvert \; i < \ell) \]
    otherwise. Likewise, let $B = T\backslash \ell$ if $\ell \in T$ and
     \[B= T \backslash \min(i  \in T \; \lvert \; \ell < i) \]
    otherwise.

    Then, $\Delta[S,T]/{\ell}$ is the lattice path delta matroid $\Delta[A,B]$ on ground set $[n]\backslash \ell$.
\end{proposition}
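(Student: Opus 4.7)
The plan is to deduce this proposition from Proposition \ref{prop: deletion } via the stated duality relation $\Delta/\ell = (\Delta^* \setminus \ell)^*$ together with the formula $\Delta[S,T]^* = \Delta[[n]\setminus T,[n]\setminus S]$ established in the preceding duality proposition.

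First, I would check that the hypothesis transfers correctly: $\ell$ is not a loop of $\Delta[S,T]$ if and only if $\ell$ is not a coloop of $\Delta[S,T]^* = \Delta[[n]\setminus T,[n]\setminus S]$. This is immediate from the definitions, since the bijection $B \mapsto [n]\setminus B$ between feasible sets of $\Delta$ and $\Delta^*$ exchanges ``belongs to every feasible set'' and ``belongs to no feasible set.'' Hence Proposition \ref{prop: deletion } applies to $\Delta[[n]\setminus T,[n]\setminus S] \setminus \ell$.

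Writing $S' = [n]\setminus T$ and $T' = [n]\setminus S$, I would note that $\ell \in S'$ iff $\ell \notin T$, and $\ell \in T'$ iff $\ell \notin S$. Proposition \ref{prop: deletion } then produces a pair $(A',B')$ on the reduced ground set $E := [n]\setminus \ell$. Concretely, $A' = S'$ when $\ell \notin S'$, and $A' = (S'\setminus \ell)\cup\{m\}$ with $m = \min\{i \notin S' \mid \ell < i\} = \min\{i \in T \mid \ell < i\}$ when $\ell \in S'$; symmetrically $B' = T'$ or $B' = (T'\setminus \ell)\cup\{m'\}$ with $m' = \max\{i \notin T' \mid i < \ell\} = \max\{i \in S \mid i < \ell\}$. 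Dualizing on $E$ yields
\[ \Delta[S,T]/\ell = (\Delta[S,T]^* \setminus \ell)^* = \Delta[A',B']^* = \Delta[E\setminus B',\, E\setminus A']. \]

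It remains to match $A := E\setminus B'$ and $B := E\setminus A'$ with the formulas in the statement, which is a case-by-case check. For instance, when $\ell \in T$ we have $A' = S'$ and so $B = E\setminus S' = T\setminus \ell$; when $\ell \notin T$ we have $A' = (S'\setminus \ell)\cup \{m\}$ and hence $B = E\setminus A' = T\setminus\{m\} = T \setminus \min\{i \in T \mid \ell < i\}$. The computations for $A = E\setminus B'$ are mirror-symmetric in the roles of $S$ and $T$. The main obstacle is purely bookkeeping: keeping track of how complementation converts ``smallest element missing from $S'$ above $\ell$'' in the deletion formula into ``smallest element of $T$ above $\ell$'' in the contraction formula, and dually on the $B'$ side. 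No combinatorial input beyond what was already used in the proof of Proposition \ref{prop: deletion } is required.
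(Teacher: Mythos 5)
Your proposal is correct and follows exactly the paper's own argument: the paper likewise deduces the contraction formula from Proposition \ref{prop: deletion } via the identity $(\Delta \backslash \ell)^* = \Delta^*/\ell$ and the complementation formula for duals, only stating the bookkeeping less explicitly than you do. The case-by-case matching of $A = E\setminus B'$ and $B = E\setminus A'$ with the stated formulas checks out, so no gap.
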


\begin{proof}
    This follows from the previous proposition since duality exchanges deletion and contraction by $(\Delta \backslash \ell)^* = \Delta^* / \ell$ and from the description of duality in terms of set complements.
\end{proof}

We have to be careful with direct sums since lattice path delta matroids depend heavily on the underlying total order of the ground set.

\begin{proposition}
    Let $\Delta = \Delta[A,B]$ and $\Delta' = \Delta[S,T]$ be lattice path delta matroids on ground sets $(E_1,\leq_1)$ and $(E_2,\leq_2)$. Then, $\Delta \oplus \Delta'$ is the lattice path delta matroid $\Delta[A \sqcup S, B \sqcup T]$ on ground set $E_1 \sqcup E_2$ with order $\leq$ obtained by extending the orders for $E_1$ and $E_2$ by $x \leq y$ for all $x \in E_1$ and $y \in E_2$.
\end{proposition}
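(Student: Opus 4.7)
The plan is to unwind both sides using the truncated cardinality characterization of the type $C_n$ Gale order from Lemma \ref{lem: Gale order and truncated sets}. Since the chosen total order places all elements of $E_1$ below all elements of $E_2$, for any subset $R \subseteq E_1 \sqcup E_2$ we have $|R_{\geq i}| = |(R \cap E_2)_{\geq i}|$ when $i \in E_2$, and $|R_{\geq i}| = |(R \cap E_1)_{\geq i}| + |R \cap E_2|$ when $i \in E_1$, with the analogous identities holding for $A \sqcup S$ and $B \sqcup T$. Consequently the two-sided inequality defining the Gale interval $[A \sqcup S, B \sqcup T]$ on $E_1 \sqcup E_2$ splits into a block involving only the $E_2$-parts and a block that couples the $E_1$-parts with $E_2$-cardinalities.

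For the containment $\mathcal{F}(\Delta \oplus \Delta') \subseteq \mathcal{F}(\Delta[A \sqcup S, B \sqcup T])$, I would take $B_1 \cup B_2$ with $B_1 \in [A, B]$ and $B_2 \in [S, T]$ in their respective Gale orders, and verify the combined inequality block-by-block. The $i \in E_2$ case is exactly the Gale condition $B_2 \in [S, T]$, while the $i \in E_1$ case becomes $|A_{\geq i}| + |S| \leq |(B_1)_{\geq i}| + |B_2| \leq |B_{\geq i}| + |T|$, which follows by summing the $E_1$-Gale inequality $|A_{\geq i}| \leq |(B_1)_{\geq i}| \leq |B_{\geq i}|$ with the cardinality bounds $|S| \leq |B_2| \leq |T|$; these cardinality bounds are themselves an instance of Lemma \ref{lem: Gale order and truncated sets} applied to $B_2$ at the minimum of $E_2$.

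For the reverse containment, given $R \in [A \sqcup S, B \sqcup T]$, I would set $B_1 = R \cap E_1$ and $B_2 = R \cap E_2$ and verify each piece separately. The $i \in E_2$ conditions directly give $B_2 \in [S, T]$ and pin $|B_2|$ into the interval $[|S|, |T|]$. To conclude $B_1 \in [A, B]$, I would revisit the $i \in E_1$ conditions $|A_{\geq i}| + |S| \leq |(B_1)_{\geq i}| + |B_2| \leq |B_{\geq i}| + |T|$ and isolate the $E_1$-only part. The main obstacle lies precisely in this decoupling step, since $|B_2|$ is not a priori rigid inside $[|S|, |T|]$; I expect to handle it by comparing the inequalities across adjacent values of $i \in E_1$ so that the $E_2$-contributions telescope out, leaving behind the desired $E_1$-Gale inequality for $B_1$.
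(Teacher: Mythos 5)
Your forward containment is correct: splitting $|R_{\geq i}|$ into an $E_1$-block and an $E_2$-block and adding the Gale inequalities for $B_1$ to the cardinality bounds $|S| \leq |B_2| \leq |T|$ does show $\mathcal{F}(\Delta \oplus \Delta') \subseteq [A \sqcup S, B \sqcup T]$. But the decoupling step you flag in the reverse containment is not a technical hurdle to be finessed by telescoping --- it is the point where the statement itself fails. Take $E_1 = \{1\}$, $E_2 = \{2\}$, $\Delta = \Delta[\{1\},\{1\}]$ and $\Delta' = \Delta[\emptyset,\{2\}]$. Then $\mathcal{F}(\Delta \oplus \Delta') = \{\{1\},\{1,2\}\}$, whereas the interval $[\{1\},\{1,2\}]$ in the type $C_2$ Gale order also contains $\{2\}$ (since $\{1\} \leq \{2\} \leq \{1,2\}$), and $\{2\} \cap E_1 = \emptyset \notin [\{1\},\{1\}]$. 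In terms of your inequalities: when $|S| < |T|$ the quantity $|R \cap E_2|$ genuinely varies over $[|S|,|T|]$ as $R$ runs over the interval, and this slack allows $|(R \cap E_1)_{\geq i}|$ to drop below $|A_{\geq i}|$ or exceed $|B_{\geq i}|$ while the coupled inequality $|A_{\geq i}| + |S| \leq |(R\cap E_1)_{\geq i}| + |R \cap E_2| \leq |B_{\geq i}| + |T|$ still holds. Comparing the conditions at adjacent $i \in E_1$ only recovers the indicator of $i \in R$ (one cannot subtract two-sided inequalities pointing the same way), so no telescoping can restore the lost bounds.

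The statement becomes correct once one adds the hypothesis $|S| = |T|$: then every $R$ in the interval has $|R \cap E_2| = |S| = |T|$, the $E_2$-contribution cancels from both sides of the $i \in E_1$ inequalities, and your block-by-block argument goes through verbatim. Geometrically, $|S|=|T|$ is exactly the condition that the two bounding symmetric lattice paths of $[A \sqcup S, B \sqcup T]$ meet at the lattice point separating the $E_1$-labeled steps from the $E_2$-labeled steps --- the type $C$ analogue of the junction condition that is automatic in the type $A$ concatenation $M[P_1P_2,Q_1Q_2]$ but not here. The paper states this proposition without proof, and as written it omits this hypothesis; your attempt has in effect located the missing assumption rather than a missing argument.
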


\begin{proposition}
    The set of lattice path delta matroids is closed under deletion, contraction, duality, and direct sums.
\end{proposition}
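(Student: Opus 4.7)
The statement collects four closure properties, three of which have essentially been established in the preceding propositions. My plan is to combine them, promoting the single-element deletion and contraction results to arbitrary subsets.

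Closure under duality is immediate: the earlier proposition shows $\Delta[S,T]^* = \Delta[[n]\backslash T, [n]\backslash S]$, which is again a lattice path delta matroid. Closure under direct sums is given directly by the proposition preceding the statement.

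For deletion by a subset $T \subseteq [n]$, I would induct on $|T|$, peeling off one element $\ell \in T$ at a time and writing $\Delta \backslash T = (\Delta \backslash \ell) \backslash (T \backslash \{\ell\})$. When $\ell$ is not a coloop of the current delta matroid, the single-element deletion proposition applies and identifies $\Delta \backslash \ell$ as a lattice path delta matroid on the reduced ground set; induction then completes the step. The main obstacle is the coloop case, which is explicitly excluded in the hypothesis of the single-element proposition. I would resolve it by observing that when $\ell \in S \cap T$ with $|S_{\geq \ell+1}| = |T_{\geq \ell+1}|$, every feasible set of $\Delta[S,T]$ contains $\ell$, so the natural interpretation of the deletion on $[n] \backslash \ell$ is $\Delta[S \backslash \ell, T \backslash \ell]$, which is again a lattice path delta matroid (Lemma~\ref{lem: Gale order and truncated sets} ensures the Gale-order inequality is preserved under removing $\ell$ from both sides).

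Closure under contraction then follows from the duality relation $\Delta / T = (\Delta^* \backslash T)^*$, combined with the closure under duality and deletion just established. Alternatively, one can iterate the single-element contraction proposition, handling the loop case symmetrically to the coloop case above. The main subtlety in the whole argument is thus the boundary behavior at loops and coloops; once these are absorbed by the direct identifications described, the rest is a bookkeeping assembly of the earlier propositions.
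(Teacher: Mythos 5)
Your overall strategy matches the paper's: the proposition is intended as a summary of the single-element deletion and contraction results, the duality formula, and the direct-sum proposition, iterated one element at a time; the paper's own proof says exactly this and nothing more. The duality, direct-sum, and ``peel off one non-coloop at a time'' parts of your argument are correct, as is deducing contraction from deletion via $(\Delta\backslash \ell)^* = \Delta^*/\ell$.

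The problem is your treatment of the coloop case. With the paper's definition of deletion, $\mathcal{F}(\Delta\backslash \ell) = \{B \in \mathcal{F}(\Delta) \; \lvert \; \ell \notin B\}$, so if $\ell$ is a coloop this collection is \emph{empty}: there is no delta matroid at all, which is exactly why the single-element deletion proposition excludes coloops. The object you call ``the natural interpretation of the deletion,'' namely $\Delta[S\backslash \ell, T\backslash \ell]$ obtained by removing $\ell$ from every feasible set, is the \emph{contraction} $\Delta[S,T]/\ell$ (compare the contraction proposition when $\ell \in S\cap T$), not the deletion. Unless you explicitly adopt the convention that deleting a coloop means contracting it --- which the paper does not --- your induction returns a wrong, non-empty answer whenever some element of $T$ is, or becomes, a coloop at an intermediate step; in that situation $\Delta\backslash T$ genuinely has no feasible sets and is not a lattice path delta matroid. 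The correct reading of the proposition is that closure holds whenever the operation produces a delta matroid at all, i.e., one only deletes non-coloops and contracts non-loops at each stage; with that caveat your assembly of the earlier results goes through exactly as in the paper, and the coloop/loop ``fix'' should simply be dropped or replaced by the observation that those cases are excluded by hypothesis.
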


\begin{proof}
    This is a summary of the results of this section.
\end{proof}

\begin{remark}
    Another commonly studied operation on delta matroids is the projection map $\operatorname{proj}_i$ which for a delta matroid $\Delta$ on ground set $[n]$ gives the delta matroid with feasible sets
    \[\mathcal{F}(\operatorname{proj}_i(\Delta)) = \{B \backslash \{i\} \; \lvert \; B \in \mathcal{F}(\Delta)\} \]
    on ground set $[n] \backslash i$.

    Lattice path delta matroids are not closed under this operation. To see this, consider the lattice path delta matroid $\Delta[34,235]$. The feasible sets are
     \[34,134,35,135,234,235,45,145,245. \]
    The projection $\operatorname{proj}_4(\Delta[34,235])$ has feasible sets
     \[3,5,13,23,15,25,35,135,235. \]
    Note that $3 \leq 125 \leq 235$ but $125$ is not a feasible set in this projection.

\end{remark}

\subsection{Enveloping Matroids}

\begin{definition}
    Let $\mathcal{L}$ be a Lagrangian matroid on ground set $[\pm n]$. Then a matroid $M$ on ground set $[\pm n]$ is an \textbf{enveloping matroid} of $\mathcal{L}$ if the image of $P(M)$ under the map $\pi: \RR^{2n} \to \RR^n$ given by
    \[ \pi(x_1,\ldots,x_n,x_{\overline{1}},\ldots,x_{\overline{n}}) = \frac{1}{2} \left ((x_1 - x_{\overline{1}}, \ldots, x_n -x_{\overline{n}}) + (1,1,\ldots,1)\right )\]
    is $P(\mathcal{L})$.
\end{definition}

This concept has recently seen use in the extension of the Hodge theory of matroids to type $C_n$ Coxeter matroids \cite{EFLS22}.

\begin{definition}
    Let $\mathcal{L} = \mathcal{L}[p,q]$ be a symmetric lattice path matroid on ground set $[\pm n]$. The \textbf{standard matroid envelope} of $\mathcal{L}$ is the lattice path matroid $M[p,q]$.
\end{definition}
We are viewing $M[p,q]$ as a lattice path matroid on the ground set $[\pm n]$ with the linear order $-n, -(n-1), \ldots, -1, 1, 2, \ldots, n$. The standard matroid envelope is obtained by adding all non-symmetric lattice paths as bases. We now check that this is indeed an enveloping matroid.
\begin{proposition}
    The standard matroid envelope $M[p,q]$ of $\mathcal{L}[p,q]$ is an enveloping matroid of $\mathcal{L}[p,q]$.
\end{proposition}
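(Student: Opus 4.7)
The plan is to establish both inclusions $P(\mathcal{L}[p,q]) \subseteq \pi(P(M[p,q]))$ and $\pi(P(M[p,q])) \subseteq P(\mathcal{L}[p,q])$ separately. The key preliminary computation for both directions is that for any admissible subset $A \subseteq [\pm n]$ of cardinality $n$ and any $i \in [n]$,
\[\pi(e_A)_i = \tfrac{1}{2}\bigl((e_A)_i - (e_A)_{-i} + 1\bigr)\]
equals $1$ when $i \in A$ and $0$ when $-i \in A$, so that $\pi(e_A) = e_{A \cap [n]}$.

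For the easy inclusion $P(\mathcal{L}[p,q]) \subseteq \pi(P(M[p,q]))$, every vertex of $P(\mathcal{L}[p,q])$ has the form $e_{\labL(r) \cap [n]} = \pi(e_{\labL(r)})$ for some symmetric lattice path $r$ with $p \leq r \leq q$. Since any such $r$ is in particular a (not-necessarily-symmetric) lattice path between $p$ and $q$, its label $\labL(r)$ is a basis of $M[p,q]$ and $e_{\labL(r)} \in P(M[p,q])$. The containment then follows from the affinity of $\pi$.

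For the harder inclusion $\pi(P(M[p,q])) \subseteq P(\mathcal{L}[p,q])$, affinity reduces the task to showing $\pi(e_B) \in P(\mathcal{L}[p,q])$ for every basis $B$ of $M[p,q]$. I will check this against the hyperplane description in Theorem~\ref{mainthm: hyperplane description}: with $S = \labL(p) \cap [n]$ and $T = \labL(q) \cap [n]$, I need to verify $0 \leq \pi(e_B)_k \leq 1$ and $|S_{\geq i}| \leq \sum_{k=i}^n \pi(e_B)_k \leq |T_{\geq i}|$ for every $i \in [n]$. The cube bounds are immediate since each $\pi(e_B)_k \in \{0,\tfrac{1}{2},1\}$. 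The partial sum expands as
\[\sum_{k=i}^n \pi(e_B)_k = \tfrac{1}{2}\bigl(|B \cap \{i,\dots,n\}| - |B \cap \{-n,\dots,-i\}| + (n-i+1)\bigr),\]
so it remains to control the two cardinalities on the right. Since the bases of the ordinary lattice path matroid $M[p,q]$ on $[\pm n]$ are exactly the subsets $B$ satisfying $\labL(p) \leq B \leq \labL(q)$ in the type-$A$ Gale order on this ordered ground set, the same-size analog of Lemma~\ref{lem: Gale order and truncated sets} supplies the bounds $|\labL(p)_{\geq j}| \leq |B_{\geq j}| \leq |\labL(q)_{\geq j}|$ for every $j \in [\pm n]$. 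Applying this at $j = i$ controls $|B \cap \{i,\dots,n\}|$, while applying it at the element of $[\pm n]$ just above $-i$ controls $|B \cap \{-n,\dots,-i\}|$ through the identity $|B \cap \{-n,\dots,-i\}| = n - |B_{> -i}|$.

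The main obstacle is combining these two type-$A$ Gale bounds on $[\pm n]$ into the single type-$C_n$ bound required by Theorem~\ref{mainthm: hyperplane description}. The key is the admissibility of $\labL(p)$ and $\labL(q)$: every admissible subset of size $n$ contains exactly one of $\{k,-k\}$ for each $k \in [n]$, so $|\labL(p) \cap \{-n,\dots,-i\}| = (n-i+1) - |S_{\geq i}|$ and analogously $|\labL(q) \cap \{-n,\dots,-i\}| = (n-i+1) - |T_{\geq i}|$. Substituting these values into the type-$A$ prefix bounds collapses them into
\[|S_{\geq i}| \;\leq\; \tfrac{1}{2}\bigl(|B \cap \{i,\dots,n\}| - |B \cap \{-n,\dots,-i\}| + (n-i+1)\bigr) \;\leq\; |T_{\geq i}|,\]
which is precisely the type-$C_n$ bound required, completing the second inclusion and therefore the proposition.
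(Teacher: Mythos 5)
Your argument is correct, but it proceeds quite differently from the paper's. The paper handles the hard inclusion $\pi(P(M[p,q])) \subseteq P(\mathcal{L}[p,q])$ by a convexity argument: for a non-admissible basis $B$ it constructs two admissible sets $B^{t}$ and $B^{b}$ (geometrically, reflecting the top and bottom halves of the path of $B$ across the diagonal $y = n-x$) and observes that $\pi(e_B)$ lies on the segment between $\pi(e_{B^{t}})$ and $\pi(e_{B^{b}})$, both of which are vertices of $P(\mathcal{L}[p,q])$. You instead verify directly that $\pi(e_B)$ satisfies the inequalities of Theorem~\ref{mainthm: hyperplane description}, converting the type-$A$ suffix bounds $|\labL(p)_{\geq j}| \leq |B_{\geq j}| \leq |\labL(q)_{\geq j}|$ on $[\pm n]$ into the type-$C_n$ bounds via admissibility of $\labL(p)$ and $\labL(q)$; I checked the arithmetic (e.g., $|B \cap \{i,\dots,n\}| - |B \cap \{-n,\dots,-i\}| + (n-i+1) \geq 2|S_{\geq i}|$) and it closes correctly, and your treatment of the easy inclusion matches the paper's first paragraph. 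The trade-off: the paper's proof is self-contained within Section~\ref{Sec: First Properties} and yields the pleasant geometric picture of folding paths across the diagonal, whereas your proof depends on Theorem~\ref{mainthm: hyperplane description}, which is only proved in Section~\ref{Sec: Polytopes}. This is a forward reference rather than a circularity (the proof of the hyperplane description does not use enveloping matroids), but if this proposition is to remain in its current location you should either flag the dependence explicitly or reorder the material.
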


\begin{proof}

    If $S \in \mathcal{B}(M[p,q])$ is an admissible subset of size $n$, then $\pi(e_S) = e_{S \cap [n]}$. Since $S$ is an admissible subset of $n$ and its corresponding path is in the interval $[p,q]$, we have that $S$ is a feasible set of $\mathcal{L}[p,q]$ and hence $\pi(e_S)$ is a vertex of $P(\mathcal{L}[p,q])$. 

    Now consider $S \in \mathcal{B}(M[p,q])$ that is not an admissible subset of $[\pm n]$. Notice that if $i$ and $\overline{i}$ are both in $S$, or if neither is in $S$, then $\pi(e_S)_{i} = \frac{1}{2}$. We will modify $S$ in two ways to obtain admissible sets. Define $S^{t}$ by
 \begin{itemize}
     \item For all $i \in [\pm n]$, $i \in S^{t}$ if $i \in S$ and $|\{i, -i\} \cap S| = 1$.
     \item For all $i \in [n]$, $-i \in S^{t}$ if $|\{i, -i\} \cap S| = 0$.
     \item for all $i \in [n]$, $i \in S^{t}$ if $|\{i,-i\} \cap S| = 2$.
 \end{itemize}
    Likewise, define $S^{b}$ by
 \begin{itemize}
     \item For all $i \in [\pm n]$, $i \in S^{b}$ if $i \in S$ and $|\{i, -i\} \cap S| = 1$.
     \item For all $i \in [n]$, $i \in S^{b}$ if $|\{i, -i\} \cap S| = 0$.
     \item for all $i \in [n]$, $-i \in S^{b}$ if $|\{i,-i\} \cap S| = 2$.
 \end{itemize}
By construction, both $S^{t}$ and $S^{b}$ are admissible subsets with the property that $\pi(e_{S^{t}})_i = \pi(e_{S^{b}})_i = \pi(e_S)_i$ whenever $|S \cap \{i, -i\}| = 1$. In the other cases, we have that one of $\pi(e_{S^{t}})_i$ and $\pi(e_{S^{b}})_i$ is $1$ and the other is $0$. Therefore, $\pi(e_S)$ lies in the line segment between $\pi(e_{S^t})$ and $\pi(e_{S^b})$. 

Geometrically, the symmetric lattice path $s^t$ associated to $S^t$ is obtained by taking the segment of the lattice path $s$ associated to $S$ , intersecting it with the half-space above the line $y = n-x$, and then reflecting along the line. Likewise, $S^b$ corresponds to the path $s^b$ obtained by intersecting with the bottom half and then reflecting. Since $s$ is a lattice path contained between the two symmetric lattice paths $p$ and $q$ we have that both $s^t$ and $s^b$ are symmetric lattice paths contained between $p$ and $q$.

This implies that $\pi(e_{S^t})$ and $\pi(e_{S^b})$ are vertices of $P(\mathcal{L}[p,q])$. By convexity, we have that $\pi(e_S) \in P(\mathcal{L}[p,q])$.
\end{proof}

\begin{example}
    The type $A$ Catalan matroid on ground set $[4n]$ is isomorphic to the standard enveloping matroid of the type $C$ Catalan matroid on ground set $[2n]$.
\end{example}

\subsection{Homogeneous Components}

\begin{definition}
    Let $\Delta$ be a delta matroid on ground set $[n]$. The $k$th \textbf{homogeneous component} $\Delta^k$ of $\Delta$ is the matroid with bases
     \[ \mathcal{B}(\Delta^k) = \{ B \in \mathcal{F}(\Delta) \; \lvert \; |B| = k\}.\]
\end{definition}

We can obtain the $k$th homogeneous component polytopally by intersecting with a specific hyperplane.
\begin{proposition}\label{prop: truncation hyperplane description}
    Let $\Delta$ be a delta matroid and $H_k$ be the hyperplane given by
     \[H_k = \{ x \in \RR^n \; \lvert \; x_1 + x_2 + \cdots +x_n = k\}.\]
    Then, $H_k \cap P(\Delta) = P(\Delta^k)$.
\end{proposition}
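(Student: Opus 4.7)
The plan is to prove the two inclusions $P(\Delta^k) \subseteq H_k \cap P(\Delta)$ and $H_k \cap P(\Delta) \subseteq P(\Delta^k)$. The first is immediate: every vertex $e_B$ of $P(\Delta^k)$ has $B \in \mathcal{F}(\Delta)$ and $|B| = k$, so $e_B \in P(\Delta) \cap H_k$, and the inclusion follows from convexity of $H_k \cap P(\Delta)$.

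For the reverse inclusion, the strategy is to identify the vertices of the slice polytope $H_k \cap P(\Delta)$ and show that each of them is of the form $e_B$ with $B \in \mathcal{B}(\Delta^k)$. Theorem \ref{thm: type B GGMS} gives the key input: every edge direction of $P(\Delta)$ lies in $\{\pm e_i, \pm (e_i - e_j)\}$. Let $g(x) = x_1 + \cdots + x_n$, so that $H_k = \{g = k\}$. Along an edge of direction $e_i - e_j$, the function $g$ is constant; along an edge of direction $e_i$, the two endpoints are 0/1 vectors whose $g$-values differ by exactly one. Since $k$ is an integer, no edge of $P(\Delta)$ crosses $H_k$ transversally in its relative interior.

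I would then invoke the standard polytope-theoretic fact that each vertex $v$ of a hyperplane section $H \cap P$ either is a vertex of $P$ lying on $H$ or arises as the transversal intersection of $H$ with the relative interior of some positive-dimensional face of $P$. In the latter case, the minimal face $F$ of $P$ whose relative interior contains $v$ must satisfy $\dim(F \cap H) = 0$, which forces $\dim F = 1$ because a hyperplane intersected with a face of dimension at least two is either empty or of dimension at least one. Thus any new vertex of $H_k \cap P(\Delta)$ would have to come from an edge of $P(\Delta)$ meeting $H_k$ transversally at an interior point, which was just excluded. Hence every vertex of $H_k \cap P(\Delta)$ is a vertex $e_B$ of $P(\Delta)$ satisfying $|B| = g(e_B) = k$, i.e.\ $B \in \mathcal{B}(\Delta^k)$, and therefore
\[
H_k \cap P(\Delta) \;=\; \operatorname{conv}\bigl(e_B \;\lvert\; B \in \mathcal{B}(\Delta^k)\bigr) \;=\; P(\Delta^k).
\]

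The only point requiring care is the face-lattice step above, namely the claim that a new vertex of the slice can only come from an edge of $P(\Delta)$ transverse to $H_k$. Once that standard lemma is set up, the edge-direction characterization from Theorem \ref{thm: type B GGMS} together with the integrality of the vertices of $P(\Delta)$ handles the rest with essentially no computation.
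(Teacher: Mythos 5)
Your proposal is correct and follows essentially the same approach as the paper's proof: both rely on the GGMS-type edge-direction characterization (Theorem \ref{thm: type B GGMS}), observe that the coordinate sum is constant along edges of direction $e_i - e_j$ and changes by exactly one along edges of direction $e_i$, and conclude that the hyperplane $H_k$ cannot cut through the relative interior of any edge. The paper asserts without elaboration that a vertex of $H_k \cap P(\Delta)$ must lie on an edge of $P(\Delta)$; you supply the missing justification (via the standard fact about minimal faces and their intersections with a hyperplane), which makes your version slightly more rigorous but identical in substance.
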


\begin{proof}

The definition implies that every vertex of $P(\Delta^k)$ is a vertex of $P(\Delta)$ contained in the hyperplane $H_k$. All that remains to show is that the intersection of $H_k$ with $P(\Delta)$ does not create any new vertices. 

Let $v$ be a vertex of $H_k \cap P(\Delta)$. Then, it must be contained in an edge of $P(\Delta)$. Let $v_1$ and $v_2$ be the vertices of $P(\Delta)$ containing the point $v$. We will show that if $v \in H_k$ then so is $v_1$ and $v_2$ and so $v$ must be $v_1$ or $v_2$.

By Theorem \ref{thm: type B GGMS}, we have that all the edge directions of $P(\Delta)$ are $e_i - e_j$ or $e_i$. If the edge between $v_1$ and $v_2$ has direction $e_i - e_j$, then $v_1 - v_2 = e_i - e_j$ and so every point in the edge has the same coordinate sum. Thus, if $v \in H_k$, then so is $v_1$ and $v_2$. Now suppose that the edge has direction $e_i$ for some $i \in [n]$. Then, the only points in the edge between $v_1$ and $v_2$ that have an integer coordinate sum are the two points $v_1$ and $v_2$. Therefore, the only points in this edge that could be in $H_k$ are the two vertices $v_1$ and $v_2$ of $P(\Delta)$. This shows that the vertices of $H_k \cap P(\Delta)$ are exactly the vertices of $P(\Delta^k)$.
\end{proof}

The homogeneous component of a lattice path matroid has a nice lattice path interpretation.

\begin{proposition}\label{prop: truncation in terms of lattice paths}
    Let $\Delta = \Delta[p,q]$ be an lattice path delta matroid on ground set $[\pm n]$. Then, $\Delta^k$ is the matroid whose bases are in bijection with all symmetric lattice paths that pass through the point $(n-k,k)$.
\end{proposition}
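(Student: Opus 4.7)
The plan is to verify the proposition by directly translating the condition $|F| = k$ on a feasible set $F$ into a geometric condition on the corresponding symmetric lattice path, using the labeling function $\labL$ from Section \ref{sec: Lattice Path Model}. No matroid axioms need to be reverified, since the fact that $\Delta^k$ is a matroid is built into the definition (and also follows from Proposition \ref{prop: truncation hyperplane description} together with Theorem \ref{thm: type B GGMS}).

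First I would unpack the label. For a symmetric lattice path $r = \alpha_1 \cdots \alpha_{2n}$, the definition of $\labL$ gives
\[
\labL(r) \cap [n] \;=\; \{\, i \in [n] : \alpha_{n+i} = E \,\},
\]
so the cardinality of the feasible set $\labL(r) \cap [n]$ is exactly the number of eastward steps among the last $n$ steps $\alpha_{n+1}, \ldots, \alpha_{2n}$ of $r$.

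Next I would translate this to a geometric statement. The terminal subpath $\alpha_{n+1} \cdots \alpha_{2n}$ runs from some lattice point $P$ to $(n,n)$, and $P$ is determined by the numbers of $E$ and $N$ steps it contains. If there are exactly $k$ eastward steps, then there must be $n-k$ northward steps, so $P = (n-k, k)$. Conversely, if $r$ passes through $(n-k, k)$, the remaining steps to $(n,n)$ consist of $k$ eastward and $n-k$ northward steps. Hence
\[
|\labL(r) \cap [n]| = k \;\iff\; r \text{ passes through } (n-k, k).
\]

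Finally, by the definition of $\Delta[p,q]$, its feasible sets are the $\labL(r) \cap [n]$ with $p \leq r \leq q$, and by the definition of $\Delta^k$, the bases are those feasible sets of cardinality $k$. Combining with the equivalence above yields the desired bijection between the bases of $\Delta^k$ and the symmetric lattice paths in $[p,q]$ passing through $(n-k, k)$. There is no substantive obstacle here; the only point requiring care is the indexing convention for $\labL$, where positive labels correspond to the second half of the path (so that cardinality of the feasible set measures east steps in the second half, not the first).
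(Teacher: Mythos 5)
Your proof is correct and takes essentially the same approach as the paper: the paper phrases the key step as counting eastward steps of the path above the anti-diagonal $y = n - x$, which is exactly your observation that the positive labels index the last $n$ steps $\alpha_{n+1},\ldots,\alpha_{2n}$, and both arguments then conclude that $k$ east steps in this terminal segment force the path through $(n-k,k)$. Your explicit attention to the indexing convention of $\labL$ is a welcome clarification but not a different method.
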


\begin{proof}
    The steps of the paths of $p$ and $q$ that have labels in $[n]$ are exactly those which are above the main diagonal $y = n - x$. Therefore, to obtain the bases of $\Delta^k$ we only need to record the eastward steps of the path above the main diagonal. For this path to end at $(n,n)$ and have exactly $k$ eastward steps above the main diagonal, it must intersect the main diagonal at $(n-k,k)$. Hence, the symmetric lattice paths which give bases for $\Delta^k$ are exactly given the paths in $[p,q]$ that pass through $(n-k,k)$. 
\end{proof}

\begin{corollary} \label{cor: Truncations are LPMs}
    Let $\Delta = \Delta[p,q]$ be a lattice path delta matroid. Then, $\Delta^k$ is a lattice path matroid, when it is non-empty.
\end{corollary}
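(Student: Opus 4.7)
The plan is to work directly with the subset description $\Delta[p,q] = \Delta[S,T]$, where $S = \labL(p) \cap [n]$ and $T = \labL(q) \cap [n]$, so that the bases of $\Delta^k$ are exactly $\mathcal{I} := \{R \in [S,T] \; \lvert \; |R| = k\}$. First I would observe that when restricted to $\binom{[n]}{k}$, the type $C_n$ Gale order on $2^{[n]}$ specializes to the usual type $A$ Gale order on $\binom{[n]}{k}$; this is immediate from the definition, since both reduce to componentwise comparison on sorted $k$-tuples. Hence it suffices to exhibit $\mathcal{I}$ as a type $A$ Gale interval in $\binom{[n]}{k}$, which will identify $\Delta^k$ with an ordinary lattice path matroid.

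Next I would show that $\mathcal{I}$ is closed under the meet and join of the Gale lattice on $\binom{[n]}{k}$. For $R_1 = \{r_1 < \cdots < r_k\}$ and $R_2 = \{s_1 < \cdots < s_k\}$, the meet and join are the sorted $k$-subsets $R_1 \wedge R_2 = \{\min(r_i, s_i)\}_{i=1}^k$ and $R_1 \vee R_2 = \{\max(r_i, s_i)\}_{i=1}^k$. The key computation is
\[
 |(R_1 \wedge R_2)_{\geq i}| = \min(|R_1|_{\geq i},|R_2|_{\geq i}), \qquad |(R_1 \vee R_2)_{\geq i}| = \max(|R_1|_{\geq i},|R_2|_{\geq i}),
\]
which follows because $\min(r_j,s_j) \geq i$ iff both $r_j \geq i$ and $s_j \geq i$, so the counted indices form the intersection of two up-sets of $[k]$ (and dually for the join). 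Together with Lemma~\ref{lem: Gale order and truncated sets}, these identities propagate the constraints $|S_{\geq i}| \leq |R_j|_{\geq i} \leq |T_{\geq i}|$ to both the meet and the join, so $R_1 \wedge R_2, R_1 \vee R_2 \in \mathcal{I}$.

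Since $\mathcal{I}$ is finite, non-empty (by hypothesis), and closed under meet and join, it has a unique minimum $S_k$ and maximum $T_k$ in the type $A$ Gale order. To conclude, I would show $\mathcal{I}$ equals the type $A$ Gale interval $[S_k,T_k]_A$ in $\binom{[n]}{k}$. The inclusion $\mathcal{I} \subseteq [S_k,T_k]_A$ is by definition of extrema. Conversely, any $R$ with $S_k \leq_A R \leq_A T_k$ satisfies $S \leq S_k \leq R$ and $R \leq T_k \leq T$ in the type $C_n$ Gale order (using the agreement of the two orders on $k$-subsets and transitivity of the $C_n$ order), and hence $R \in \mathcal{I}$. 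This identifies $\Delta^k$ with the lattice path matroid $M[S_k,T_k]$.

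The main obstacle is establishing the meet/join identity for $|(R_1 \wedge R_2)_{\geq i}|$ and $|(R_1 \vee R_2)_{\geq i}|$; although elementary, this is precisely what is needed to preserve the type $C_n$ Gale constraints under the type $A$ lattice operations, and it is what upgrades ``closed under meet and join'' to ``forms a Gale interval.''
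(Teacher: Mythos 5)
Your proof is correct, but it takes a genuinely different route from the paper's. The paper argues geometrically: by Proposition \ref{prop: truncation in terms of lattice paths}, the bases of $\Delta^k$ are the symmetric lattice paths in $[p,q]$ through $(n-k,k)$, and such a path is determined by its portion above the line $y=n-x$, which is an ordinary lattice path in the skew-diagram $\lambda(q)/\lambda(p)$ intersected with the box $[n-k,n]\times[k,n]$; the bounding paths of that restricted skew-diagram are then read off directly. You instead work entirely on the subset side: you observe that the type $C_n$ Gale order restricted to $\binom{[n]}{k}$ is the type $A$ Gale order, verify the identities $|(R_1\wedge R_2)_{\geq i}|=\min(|(R_1)_{\geq i}|,|(R_2)_{\geq i}|)$ and $|(R_1\vee R_2)_{\geq i}|=\max(|(R_1)_{\geq i}|,|(R_2)_{\geq i}|)$ (which are right --- the counted index sets are up-sets of $[k]$), and conclude via Lemma \ref{lem: Gale order and truncated sets} that the $k$-element layer of $[S,T]$ is closed under meet and join, hence is itself a type $A$ Gale interval $[S_k,T_k]_A$. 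All steps check out, including the transitivity argument showing $[S_k,T_k]_A\subseteq[S,T]$. The trade-off: the paper's argument produces the bounding paths of $\Delta^k$ explicitly (as in Figure \ref{fig: truncation of LPDM}), whereas yours obtains $S_k$ and $T_k$ only as the meet and join of all $k$-element feasible sets; on the other hand, your lattice-closure argument is independent of the path picture and isolates a reusable fact, namely that the equal-cardinality layers of a type $C_n$ Gale interval are type $A$ Gale intervals.
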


\begin{proof}
    To choose a symmetric lattice path in $[p,q]$ it suffices to choose a lattice path starting at some $(n-a,a)$ that is contained between the truncations of $p$ and $q$ above the line $y = n - x$ since we can reflect this path to obtain a symmetric lattice path. Therefore, when the truncation is non-empty, the bases of $\Delta[p,q]$ are the lattice paths in the Young diagram obtained by intersecting the skew-diagram $\lambda(p,q)$ with the box with southwest corner $(n-k,k)$ and northeast corner $(n,n)$. The bounding paths of this skew-diagram give the bounding paths of the lattice path matroid $\Delta^k$.
\end{proof}

\begin{example}
    Consider the lattice path delta matroid $\Delta[135,2456]$ on ground set $[6]$. Then, the shaded region below gives the skew-diagram for the lattice path matroids obtained as truncations at $k = 3$ and $k = 4$. These are the matroids $M[135,456]$ and $M[1235,2456]$.

\begin{figure}[H]
\centering
\scalebox{0.9}{
\begin{tikzpicture}[inner sep=0in,outer sep=0in]
\node (n) {\begin{varwidth}{5cm}{
\ydiagram[*(green)]
  {3+3,3+2,3+1}
*[*(white)]{3+3,2+3,1+3,3,2,1}}\end{varwidth}};
\draw[very thick, black] (n.north west) -- (n.south east);
\end{tikzpicture}
\begin{tikzpicture}[inner sep=0in,outer sep=0in]
\node (n) {\begin{varwidth}{5cm}{
\ydiagram[*(green)]
  {3+3,2+3}
*[*(white)]{3+3,2+3,1+3,3,2,1}}\end{varwidth}};
\draw[very thick, black] (n.north west) -- (n.south east);
\end{tikzpicture}
}
\caption{The skew-diagrams of the lattice path matroids corresponding to the truncations at $k = 3$ and $k =4$.}
\label{fig: truncation of LPDM}
\end{figure}
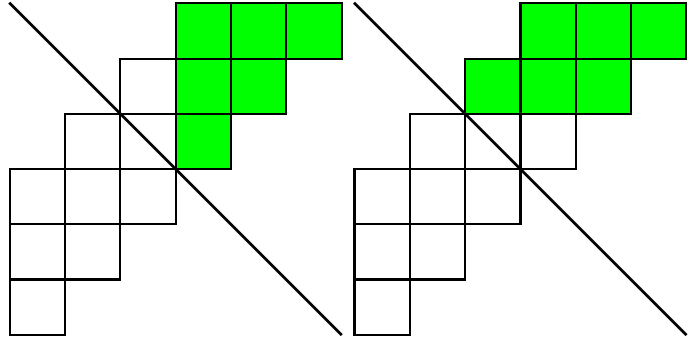

\end{example}

\section{Delta Matroid Polytopes}\label{Sec: Polytopes}

\subsection{Hyperplane Description}

We now give hyperplane descriptions of lattice path delta matroid polytopes. Given a subset $S$ of $E$, we use the notation $S_{\geq i} = \{s \in S \; \lvert \; s \geq i\}$. Likewise, for $x \in \RR^E$, let 
\[ x_{\geq i} =\sum_{\substack{j \in E \\ j \geq i}} x_j.\]
For two subsets $A,B$ of a finite set $E$, let $C(A,B)$ denote the polytope in $\RR^E$ determined by the inequalities

     \begin{align} |A_{\geq i}| \leq x_{\geq i} \leq |B_{\geq i}|
     \end{align}
     and
     \begin{align}
         0 \leq x_i \leq 1
     \end{align}
        for all $i \in E$. We will show that $C(S,T) = P(\Delta[S,T])$ for $S,T \subseteq [n]$. We begin with a lemma that describes the intersection of $C(S,T)$ with the hyperplane $\{x \in \RR^n\; \lvert \;x_i = 0\}$.

\begin{lemma}\label{lem: faces of C(S,T)}
    Let $S$ and $T$ be subsets of $[n]$ such that $S \leq T$ in the type $C_n$ Gale order. Let $A$ and $B$ be the subsets of $[n] \backslash i$ defining the lattice path delta matroid $\Delta[A,B] =\Delta[S,T] \backslash \ell$ as in Proposition \ref{prop: deletion }. If $x \in C(S,T)$ satisfies $x_\ell = 0$, then $\pi(x) \in C(A,B)$, where $\pi$ is the projection $\RR^{[n]} \to \RR^{[n] \backslash \ell}$.
\end{lemma}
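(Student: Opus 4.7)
The plan is to reduce the claim to verifying the tail-sum inequalities defining $C(A,B)$ directly. Since $x_\ell = 0$, we have $\pi(x)_{\geq j} = x_{\geq j}$ for every $j \in [n] \backslash \ell$, and the box constraints $0 \leq \pi(x)_j \leq 1$ are inherited from $x$, so it suffices to prove $|A_{\geq j}| \leq x_{\geq j} \leq |B_{\geq j}|$ for all $j \in [n] \backslash \ell$.

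I would next carry out the bookkeeping comparing $A$ to $S$ and $B$ to $T$. When $\ell \notin S$ we have $A = S$, so the lower bounds are immediate. When $\ell \in S$, write $m = \min\{i > \ell : i \notin S\}$, so that $\ell, \ell+1, \ldots, m-1 \in S$ and $m \notin S$; a direct count then gives $|A_{\geq j}| = |S_{\geq j}|$ except on the range $\ell < j \leq m$, where $|A_{\geq j}| = |S_{\geq j}| + 1$. Symmetrically, when $\ell \in T$ and $m' = \max\{i < \ell : i \notin T\}$, the only strengthened upper bounds are $|B_{\geq j}| = |T_{\geq j}| - 1$ on the range $m' < j \leq \ell$; otherwise $|B_{\geq j}| = |T_{\geq j}|$.

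The substantive step is proving these strengthened bounds, both of which come from a single-step transfer of slack exploiting $x_\ell = 0$. For the lower bound on $\ell < j \leq m$, decompose
\[ x_{\geq \ell} = x_\ell + \sum_{i = \ell + 1}^{j-1} x_i + x_{\geq j} \leq (j - \ell - 1) + x_{\geq j} \]
using $x_\ell = 0$ and $x_i \leq 1$, then combine with the $C(S,T)$ constraint $x_{\geq \ell} \geq |S_{\geq \ell}|$ and the identity $|S_{\geq \ell}| = (m - \ell) + |S_{\geq m+1}|$ to conclude $x_{\geq j} \geq |S_{\geq j}| + 1 = |A_{\geq j}|$. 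The upper bound on $m' < j \leq \ell$ is exactly dual: decompose $x_{\geq j}$ by isolating $x_\ell = 0$, apply $x_i \leq 1$ on the interval $[j, \ell - 1]$ (which lies in $T$ by maximality of $m'$), and invoke $x_{\geq \ell + 1} \leq |T_{\geq \ell + 1}|$ together with $|T_{\geq j}| = (\ell - j + 1) + |T_{\geq \ell + 1}|$.

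The main obstacle is the case analysis around $\ell$: one must pinpoint exactly the intervals $(\ell, m]$ and $(m', \ell]$ on which the new inequalities are strictly stronger than those inherited from $C(S,T)$, and then deploy $x_\ell = 0$ together with the Lipschitz bound $x_i \leq 1$ in precisely the right places. Once this localization is made, the transfer of one unit of slack is entirely mechanical.
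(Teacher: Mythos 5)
Your proposal is correct and uses essentially the same mechanism as the paper's proof: the only bounds that strengthen are on the run of consecutive elements of $S$ above $\ell$ (resp.\ of $T$ below $\ell$), and one unit of slack is transferred there using $x_\ell = 0$ together with $0 \leq x_i \leq 1$. The paper propagates this bound by induction one coordinate at a time starting from $i = \ell+1$, whereas you telescope the whole sum at once; these are the same argument.
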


\begin{proof}
Let $j = \min(i \not \in S \; \lvert \; \ell < i)$ and $m =  \max(i \in S \; \lvert \; i < \ell)$. As in the proof of Proposition \ref{prop: deletion }, if $\ell \not \in S$, then $A = S$. Otherwise,
    \[ |A_{\geq i}| = 
    \begin{cases}
        |S_{\geq i}| + 1 & \text{if $i \in [\ell+1,j]$} \\
        |S_{\geq i}| & \text{otherwise}.
    \end{cases} \] 
    
    Similarly, if $\ell \not \in T$, then $T = B$, Otherwise,
    \[ |B_{\geq i}| = 
    \begin{cases}
        |T_{\geq i}| - 1 & \text{if } i \in [m+1,\ell]\\
        |T_{\geq i}| & \text{otherwise}.
    \end{cases} \]

If $\ell \not \in S$, then the needed lower bounds trivially follow since the bounds are the same. Now, suppose $\ell \in S$. We will show that $x$ satisfies the lower bounds by induction. Using the equations for $C(S,T)$ and $x_{\ell} = 0$, we have the inequality
 \[|S_{\ell}| \leq x_{\ell + 1} + \cdots + x_n. \]
Since $|S_{\ell}| = |S_{\ell+1}| + 1$, we have that the left hand side is bounded by $|A_{\ell+1}|$, which will give the base case. Now suppose that $i \in [\ell+2, j]$, and inequality (1) for $C(A,B)$ holds for $i-1$. Then, we have the inequalities
 \[|S_{\geq i}| \leq x_{i} + \cdots + x_n \quad \quad \text{and} \quad \quad |A_{\geq i-1}| \leq x_{i-1} + \cdots + x_n. \]
Since $0 \leq x_{i-1} \leq 1$, these inequalities together imply
 \[\max(|S_{\geq i}|, |A_{\geq i-1}| - 1) \leq x_{i -1} + \cdots +x_n. \]
Since $i \in [\ell+1,j]$, we know that $i \in S$, so $|A_{\geq i-1}| - 1 = |A_{\geq i}| = |S_{\geq i}| + 1$. This gives the needed inequality.

A similar argument gives the bounds on the other side.
\end{proof}

\begin{theorem}[Theorem \ref{mainthm: hyperplane description}]\label{thm: hyperplane description}
    Let $S$ and $T$ be subsets of $[n]$ with $S \leq T$ in the type $C_n$ Gale order. Then, $P(\Delta[S,T]) = C(S,T)$.
\end{theorem}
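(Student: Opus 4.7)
The plan is to establish the two inclusions separately, with the reverse inclusion being the main work.

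For $P(\Delta[S,T]) \subseteq C(S,T)$, I would directly check that each vertex $e_R$ with $R \in [S,T]$ satisfies the defining inequalities. The box bounds are automatic, and since $(e_R)_{\geq i} = |R_{\geq i}|$, the bounds $|S_{\geq i}| \leq |R_{\geq i}| \leq |T_{\geq i}|$ follow from $S \leq R \leq T$ via Lemma \ref{lem: Gale order and truncated sets}; convexity then yields the inclusion.

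For $C(S,T) \subseteq P(\Delta[S,T])$, my plan is to prove that every vertex of $C(S,T)$ is a $0/1$ vector. Since $C(S,T) \subseteq [0,1]^n$, any such vertex equals $e_R$ for some $R \subseteq [n]$, and the defining inequalities together with Lemma \ref{lem: Gale order and truncated sets} force $R \in [S,T]$; convex hull then gives the inclusion. To prove integrality, suppose for contradiction that $v$ is a vertex of $C(S,T)$ with $I := \{i \in [n] : v_i \notin \mathbb{Z}\}$ nonempty, and set $J := \{i \in [n+1] : v_{\geq i} \in \mathbb{Z}\}$, where $v_{\geq n+1} := 0$ so that $n+1 \in J$. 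I would construct a nonzero $d \in \RR^n$ with $\operatorname{supp}(d) \subseteq I$ and $d_{\geq i} = 0$ for all $i \in J$. For small $\epsilon > 0$, both $v \pm \epsilon d$ remain in $C(S,T)$: the box constraints stay strict on $\operatorname{supp}(d)$ because $v_i \in (0,1)$ there; the sum constraints indexed by $i \in J$ are preserved since $d_{\geq i} = 0$; and those indexed by $i \notin J$ remain strict, since $v_{\geq i} \notin \mathbb{Z}$ lies strictly between the integer bounds $|S_{\geq i}|$ and $|T_{\geq i}|$. This contradicts that $v$ is a vertex.

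The combinatorial heart of the argument is the observation that, for consecutive elements $j_s < j_{s+1}$ of $J$, the interval $[j_s, j_{s+1})$ must contain either $0$ or at least $2$ elements of $I$: indeed, $v_{\geq j_s} - v_{\geq j_{s+1}} = \sum_{j_s \leq k < j_{s+1}} v_k$ is an integer, while terms with $k \notin I$ already contribute integrally (as $v_k \in \{0,1\}$), so a single non-integer term would spoil integrality. If some such inner interval has $|I \cap [j_s, j_{s+1})| \geq 2$, I pick two indices $i_1 < i_2$ in it and take $d := e_{i_1} - e_{i_2}$. In the residual case $1 \notin J$ with $I \cap [1, \min J)$ nonempty, no constraint binds $d$ on this leading segment and $d := e_{i^*}$ for any $i^* \in I \cap [1, \min J)$ works directly. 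Together these cases cover every nonempty $I$, completing the contradiction. The main obstacle I anticipate is the careful bookkeeping of which constraints are active at $v$ and the verification that the constructed perturbation respects all of them; the integrality of interval sums is the conceptual key that makes the construction possible.
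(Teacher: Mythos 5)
Your proof is correct, and it takes a genuinely different route from the paper's. For the containment $C(S,T)\subseteq P(\Delta[S,T])$, the paper inducts on the size of the ground set: points with some coordinate equal to $0$ or $1$ are pushed into a face identified with $C(A,B)$ for the deletion $\Delta[S,T]\backslash\ell$ (this requires Lemma \ref{lem: faces of C(S,T)}, and hence the explicit combinatorial description of deletion in Proposition \ref{prop: deletion }), while a point with all coordinates strictly between $0$ and $1$ is handled by subtracting a multiple of a suitable vertex and rescaling, in two cases according to whether $m\leq 1-M$ or $m\geq 1-M$. You instead prove directly that every vertex of $C(S,T)$ is a $0/1$ vector, by exhibiting a feasible perturbation direction $d$ at any putative non-integral vertex; your key observation that each gap between consecutive indices with integral partial sums contains either zero or at least two fractional coordinates is exactly what makes the choice $d=e_{i_1}-e_{i_2}$ (or $d=e_{i^*}$ on the leading segment) work, and your case analysis is exhaustive. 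This is in essence a direct integrality proof exploiting the interval (hence totally unimodular) structure of the constraint matrix $x_i+\cdots+x_n$. What your approach buys: it is self-contained (no appeal to the deletion machinery), and it cleanly avoids the two-case scaling step that the paper points out was the gap in Bidkhori's type $A_n$ argument. What the paper's approach buys: the inductive reduction through faces produces, along the way, the identification of the facets $\{x_\ell=0\}$ and $\{x_\ell=1\}$ with deletions and contractions, which is reused later in the analysis of faces. Both are valid; yours is arguably the more standard and more robust polyhedral argument.
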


\begin{proof}
    By Lemma \ref{lem: Gale order and truncated sets}, we have that a subset $A$ satisfies $S \leq A \leq T$ if and only if $|S_{\geq i}| \leq |A_{\geq i}| \leq |T_{\geq i}|$ for all $i \in [n]$. From this, it follows that $|S_{\geq i}| \leq e_{A_{\geq i}} \leq |T_{\geq i}|$ and so every vertex of $P(\Delta[S,T])$ satisfies these equations. Since these equations are closed under convex combinations, this proves that $P(\Delta[S,T]) \subseteq C(S,T)$.

    For containment in the other direction, suppose that $z$ satisfies the conditions of the proposition. We need to show that $z \in P(\Delta[S,T])$. We prove this by induction on the size of the ground set of $\Delta[S,T]$. The base case is when the ground set has size $1$, in which case the hyperplane description easily holds.

    If $z$ satisfies $z_i = 0$ for some coordinate $i$, then by the inductive hypothesis and Lemma \ref{lem: faces of C(S,T)}, we have that $\pi(z)$ is contained in $P(\Delta[S,T]) \backslash i$, where $\pi: \RR^n \to \RR^{[n] \backslash i}$ is the projection map. Therefore, $z$ is contained in the face of $P(\Delta[S,T])$ intersected with $\{ x \in \RR^n\; \lvert \; x_i = 0\}$. A dual argument can be made for whenever $z_i = 1$.

    Now suppose that $z$ has $z_i \not = 0$ and $z_i \not = 1$ for all $i \in [n]$. Let $m$ be the minimal non-zero value of the coordinates of $x$, and let $M$ be the maximal non-one value of the coordinates of $x$. Since $z_i \neq 0,1$, this means $\Delta[S,T]$ has no loops or coloops.
    
    We proceed by cases:

\vskip 2ex
\noindent
    \textbf{Case 1:} $m \leq 1 - M$
    
    Choose one index $\ell$ such that $z_{\ell} = m$. Since $\Delta[S,T]$ has no loops, we can choose a vertex $v$ of $P(\Delta[S,T])$ such that $v_{\ell} = 1$. Consider the vector $y$ given by
     \[y = \frac{z  - mv}{1-m}. \]
    It is clear that $y$ continues to satisfy equation (1) of the proposition since both $z$ and $v$ satisfy them.
    
    To show that it satisfies equation (2), let $j\not = \ell$ be an index in $[n]$. If $v_j = 1$, then $y_j = \frac{z_j-m}{1-m}$. Since $z_j$ is larger than the minimal value $m$ and is less than $1$, we have that $0 \leq y_j \leq 1$. If $v_j = 0$, then $y_j = \frac{z_j}{1-m}$. Since $z_j$ is bounded above by $M$ and $M \leq 1 - m$, we have that $y_j$ satisfies equation (2).

    By convexity, it suffices to show that $y \in P(\Delta[S,T])$. However, $y_{\ell} = 0$, so we can use the inductive hypothesis as above to show that $y \in P(\Delta[S,T])$.

\vskip 2ex
\noindent
    \textbf{Case 2:} $m \geq 1 - M$

    Choose one index $\ell$ such that $z_{\ell} = M$. Let $v$ be a vertex of $P(\Delta[S,T])$ such that $v_{\ell} = 0$. Then, consider the vector
     \[y = \frac{z - (1-M) v}{M}. \]
    As in the previous case, we can quickly see that $y$ satisfies equation (2) for all $i \in [n]$.

    Let $j\not = \ell \in [n]$. If $v_j = 1$, then $y_j = \frac{z_j - (1- M)}{M}$. Since $z_j$ is larger than $m$, which by the assumption of this case, is larger than $(1-M)$, we have that $0 \leq y_j \leq 1$. If $v_j = 0$, then $y_j = \frac{z_j}{M}$. Hence, $z_j$ is bounded above by $M$ and so $0 \leq y_j \leq 1$.

    By convexity, $z \in P(\Delta[S,T])$ if and only if $y \in P(\Delta[S,T])$. Again notice that $y_{\ell} = 1$ and so the inductive hypothesis implies the result.
\end{proof}

\begin{remark}
    This is a type $C_n$ generalization of Lemma 3.8 in \cite{Bid12}. However, the proof in that paper is incomplete. In particular, Bidkhori only has the type $A_n$ version of case 1 above. The argument presented there fails whenever $m < 1-M$. That proof can be fixed by considering a second case as in our proof. Another approach to this proposition is to study symmetric generalized lattice paths generalizing the proof that appears in \cite{knauer2018lattice}.
\end{remark}

\subsection{Dimension Formula}

From this hyperplane description, we immediately obtain the following description of the dimension of a lattice path delta matroid.

\begin{corollary}\label{cor: dimension formula subsets}
    Let $S,T \subseteq [n]$ with $S\leq T$ in the type $C_n$ Gale order. 
    Then the dimension of the polytope $P(\Delta[S,T])$ is $n-\ell$, where $\ell=\{ i \in [n]:|S_{\geq i}|=|T_{\geq i}| \}$.
\end{corollary}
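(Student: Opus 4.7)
The plan is to read the dimension directly off the hyperplane description in Theorem~\ref{mainthm: hyperplane description}. Let $J = \{i \in [n] : |S_{\geq i}| = |T_{\geq i}|\}$, so $|J| = \ell$, and let $D = [n] \setminus J$. For each $i \in J$, the two bounds $|S_{\geq i}| \leq x_i + \cdots + x_n \leq |T_{\geq i}|$ coincide and force the equation $x_i + x_{i+1} + \cdots + x_n = |S_{\geq i}|$ on all of $P(\Delta[S,T])$. The $\ell$ coefficient vectors $\sum_{j \geq i} e_j$ indexed by $i \in J$ are upper-triangular and therefore linearly independent, so the affine hull of $P(\Delta[S,T])$ sits in an affine subspace of dimension at most $n - \ell$.

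For the matching lower bound, I would produce a point of $P(\Delta[S,T])$ that lies in the relative interior of this subspace. A natural candidate is the centroid
\[ z = \frac{1}{|[S,T]|} \sum_{R \in [S,T]} e_R. \]
Since both $e_S$ and $e_T$ appear with positive weight, the running-sum inequalities $|S_{\geq i}| \leq z_{\geq i} \leq |T_{\geq i}|$ are strict at every $i \in D$. Likewise, for any $i \in [n]$ that is not a loop or coloop of $\Delta[S,T]$, there exist feasible sets both containing and avoiding $i$, so that $0 < z_i < 1$ at every such index.

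The remaining step is to check that the only coordinate-bound equalities $x_i = 0$ or $x_i = 1$ forced on $P(\Delta[S,T])$ are already consequences of the $\ell$ equations coming from $J$. By Lemma~\ref{lem: alternate coloop def}, $i$ is a coloop exactly when $|S_{\geq i}| = |T_{\geq i}|$ and $|S_{\geq i+1}| = |T_{\geq i+1}|$, in which case $x_i = |S_{\geq i}| - |S_{\geq i+1}|$ is the difference of two $J$-equations (with the convention that the equation at index $n+1$ reads $0=0$). Applying the same argument to $\Delta[S,T]^* = \Delta[[n]\setminus T,[n]\setminus S]$ handles loops. Together with the interior point $z$, this shows the affine hull has dimension exactly $n - \ell$.

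The main obstacle is this last bookkeeping step: one must be careful that the edge directions of type $e_i$ allowed by Theorem~\ref{thm: type B GGMS} do not contribute extra forced equalities beyond those already captured by the running-sum equations, and the loop/coloop characterization of Lemma~\ref{lem: alternate coloop def} is the cleanest way to rule these out.
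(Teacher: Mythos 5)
Your proof is correct and follows the route the paper intends, namely reading the dimension off the hyperplane description in Theorem~\ref{mainthm: hyperplane description}; the paper itself gives no explicit argument and only asserts the corollary is ``immediate,'' so you have actually filled in the missing bookkeeping. The upper bound via the $\ell$ linearly independent running-sum equations is clean, and the centroid $z$ is a valid relative-interior witness once one checks, as you do, that the strict constraints at $z$ are exactly those not implied by the $J$-equations and that the loop/coloop constraints $x_i = 0$, $x_i = 1$ are forced by the $J$-equations alone (since a loop or coloop $i$ puts both $i$ and $i+1$ in $J$, with the convention that $n+1$ is always in $J$).

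One small inaccuracy in the write-up: you state that ``$i$ is a coloop exactly when $|S_{\geq i}| = |T_{\geq i}|$ and $|S_{\geq i+1}| = |T_{\geq i+1}|$,'' but Lemma~\ref{lem: alternate coloop def} also requires $i \in S \cap T$ (equivalently $|S_{\geq i}| - |S_{\geq i+1}| = 1$); without it the displayed conditions could instead describe a loop. Your argument only uses the ``only if'' direction (coloop implies $i, i+1 \in J$), which is correct, so the conclusion stands, but the ``exactly when'' should be softened to ``only when'' or the $i \in S \cap T$ clause restored.
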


We can use this to obtain a formula for the dimension of $P(\Delta[p,q])$ in terms of the paths.

\begin{proposition}\label{prop: dimension formula}
    Let $p$ and $q$ be two $n$-symmetric lattice paths. Then the dimension of $P(\Delta[p,q])$ is $n - k + 1$, where $k$ is the number of lattice points in the intersection of $p$ and $q$ weakly above the line $y = n - x$.
\end{proposition}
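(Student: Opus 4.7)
The plan is to apply Corollary \ref{cor: dimension formula subsets} to $\Delta[p,q] = \Delta[S,T]$, where $S = \labL(p) \cap [n]$ and $T = \labL(q) \cap [n]$, and to show that the quantity $\ell = |\{i \in [n] : |S_{\geq i}| = |T_{\geq i}|\}|$ satisfies $\ell = k - 1$. The dimension formula $\dim P(\Delta[p,q]) = n - \ell$ will then yield the desired $n - k + 1$.

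The first step is a geometric reinterpretation of $|S_{\geq i}|$. For $i \in [n]$, the label $i$ corresponds to the step $\alpha_{n+i}$, so $|S_{\geq i}|$ counts the eastward steps of $p$ among $\alpha_{n+i}, \alpha_{n+i+1}, \ldots, \alpha_{2n}$. Since $p$ terminates at $(n,n)$, this count equals $n - x_p^{(n+i-1)}$, where $x_p^{(j)}$ denotes the $x$-coordinate of $p$ after $j$ unit steps, and the analogous identity holds for $T$ and $q$. Because any lattice point on a monotone lattice path with coordinate sum $j$ is reached at step $j$, two such paths share a common lattice point of coordinate sum $j$ if and only if their $x$-coordinates agree at step $j$. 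Hence for each $i \in [n]$, the equality $|S_{\geq i}| = |T_{\geq i}|$ is equivalent to $p$ and $q$ sharing a common lattice point on the anti-diagonal $\{x + y = n + i - 1\}$.

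Assembling these observations, as $i$ runs over $[n]$ the value $n + i - 1$ ranges over $\{n, n+1, \ldots, 2n-1\}$, so $\ell$ equals the number of common lattice points of $p$ and $q$ whose coordinate sum lies in $\{n, \ldots, 2n-1\}$, i.e.\ weakly above the line $y = n - x$ but strictly below the terminal point $(n,n)$. Since $p$ and $q$ always meet at $(n,n)$ (coordinate sum $2n$), we obtain $k = \ell + 1$, which gives the proposition. The only real bookkeeping obstacle is checking the endpoint contributions to $k$: the shared endpoint $(n,n)$ is the unique guaranteed intersection not captured by the corollary and is responsible for the ``$+1$'' correction, while the other shared endpoint $(0,0)$ has coordinate sum $0 < n$ (for $n \geq 1$) and so does not lie weakly above $y = n - x$.
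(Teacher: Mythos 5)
Your proposal is correct and follows essentially the same route as the paper's proof: apply Corollary \ref{cor: dimension formula subsets} and translate the condition $|S_{\geq i}| = |T_{\geq i}|$ into the statement that $p$ and $q$ share a lattice point on the anti-diagonal of coordinate sum $n+i-1$. Your accounting of the endpoint $(n,n)$ as the source of the ``$+1$'' is in fact slightly more careful than the paper's, which leaves that step implicit.
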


\begin{proof}
    Let $A = \labL(p)$ and $B = \labL(q)$, and let $S$ and $T$ be the corresponding subsets of $[n]$ in the delta model. By Corollary \ref{cor: dimension formula subsets}, the dimension of the polytope $P(\Delta[p,q]) = P(\Delta[S,T])$ is $n - \ell$, where $\ell$ is the number of $i \in [n]$ such that $|S_{\geq i}| = |T_{\geq i}|$.

    Let $p = \alpha_1 \cdots \alpha_{2n}$ and $q = \beta_1 \cdots \beta_{2n}$. Translating through the various bijections, we see that $|S_{ \geq i}| = | T_{\geq i}|$ occurs exactly when the number of eastward steps in $\alpha_{2n - i} \cdots \alpha_{2n}$ is equal to the number of eastward steps in $\beta_{2n - i} \cdots \beta_{2n}$. Say that the number of eastward steps in this word is $a$ and the number of northward steps is $b$. Then, the two paths intersect at the lattice point $(n-a, n-b)$ which is weakly above the line $y = n - x$ since we are only considering the suffixes of the words corresponding to $p$ and $q$ of length less than $n$.
\end{proof}

We say that an interval $[p,q]$ is \textbf{linked} if the polytope $P(\Delta[p,q])$ has dimension $n$. By the previous proposition, this is equivalent to the paths $p$ and $q$ not intersecting outside the start and end points.

\subsection{Intersections and Faces}

\begin{proposition}\label{prop: intersections of LPDM are LPDM}
    Let $\Delta[S,T]$ and $\Delta[S',T']$ be two lattice path delta matroids with feasible polytopes $P = P(\Delta[S,T])$ and $P' = P(\Delta[S',T'])$. The intersection $P \cap P'$ is either empty or the feasible polytope of a lattice path delta matroid.
\end{proposition}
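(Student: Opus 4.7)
The plan is to use the hyperplane description of Theorem~\ref{mainthm: hyperplane description} to recognize the intersection, when nonempty, as $C(U,V)$ for explicit subsets $U,V \subseteq [n]$. Writing $P = C(S,T)$ and $P' = C(S',T')$, the intersection is cut out by
\[ \max(|S_{\geq i}|, |S'_{\geq i}|) \leq x_{\geq i} \leq \min(|T_{\geq i}|, |T'_{\geq i}|) \qquad \text{and} \qquad 0 \leq x_i \leq 1 \]
for every $i \in [n]$, so I would try to realize the lower and upper bounds as $|U_{\geq i}|$ and $|V_{\geq i}|$.

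The key point is the following reconstruction principle: for any $A \subseteq [n]$, the function $i \mapsto |A_{\geq i}|$ is non-increasing with consecutive differences in $\{0,1\}$, and conversely any such sequence $(a_1, \ldots, a_n)$ with $a_n \in \{0,1\}$ is $i \mapsto |A_{\geq i}|$ for the unique subset $A$ defined by $i \in A \iff a_i > a_{i+1}$ (with $a_{n+1} = 0$). I would verify by a short case analysis that if $u$ and $v$ are two such sequences, then $\max(u,v)$ and $\min(u,v)$ also have consecutive differences in $\{0,1\}$; for instance, in the mixed case $u(i) \geq v(i)$ but $u(i-1) \leq v(i-1)$, one gets $u(i) \leq u(i-1) + 1 \leq v(i-1) + 1$ and $u(i) \geq v(i) \geq v(i-1)$, which pins the difference to $\{0,1\}$. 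This produces well-defined subsets $U,V \subseteq [n]$ with $|U_{\geq i}| = \max(|S_{\geq i}|, |S'_{\geq i}|)$ and $|V_{\geq i}| = \min(|T_{\geq i}|, |T'_{\geq i}|)$.

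Next, by Lemma~\ref{lem: Gale order and truncated sets}, the relation $U \leq V$ in the type $C_n$ Gale order is equivalent to the four inequalities $|S_{\geq i}|, |S'_{\geq i}| \leq |T_{\geq i}|, |T'_{\geq i}|$ holding for every $i$. Two of these follow from $S \leq T$ and $S' \leq T'$, so there are two cases. If some cross-inequality fails, say $|S_{\geq i}| > |T'_{\geq i}|$, then any $x \in P \cap P'$ would have to satisfy $|S_{\geq i}| \leq x_{\geq i} \leq |T'_{\geq i}|$, which is impossible, so $P \cap P' = \emptyset$. Otherwise $U \leq V$, the lattice path delta matroid $\Delta[U,V]$ is defined, and applying Theorem~\ref{mainthm: hyperplane description} shows
\[ P(\Delta[U,V]) = C(U,V) = C(S,T) \cap C(S',T') = P \cap P', \]
since the defining inequalities coincide term-by-term.

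The main obstacle is the careful but elementary case analysis showing that the pointwise max and min of two staircase-difference sequences remain staircase-difference sequences; once this lemma is in place, Theorem~\ref{mainthm: hyperplane description} does all the geometric work and the conclusion is immediate.
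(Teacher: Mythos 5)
Your approach matches the paper's proof: both pass to the hyperplane description of Theorem~\ref{mainthm: hyperplane description}, observe that the intersection is cut out by pointwise extrema of the two sequences $i\mapsto |S_{\geq i}|$, etc., verify the resulting sequences are still ``staircase'' sequences (non-increasing, consecutive differences in $\{0,1\}$), reconstruct subsets $U,V$ from them, and use Lemma~\ref{lem: Gale order and truncated sets} to detect whether $U\leq V$ or the intersection is empty. In fact your version is a bit cleaner: the paper's proof contains a typo, writing $c_i = \min(a_i,a_i')$ and $d_i = \max(b_i,b_i')$ where it should use $\max$ for the lower bound and $\min$ for the upper, exactly as you do. One small slip in your mixed-case verification: you write $u(i) \geq v(i) \geq v(i-1)$, but since $v$ is non-increasing that last inequality is reversed; the correct chain is $v(i-1) \geq u(i-1) \geq u(i)$ for the lower bound $0$ and $v(i-1) \leq v(i)+1 \leq u(i)+1$ for the upper bound $1$. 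With that repaired, the lemma and the proof go through.
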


\begin{proof}
    Let $a_i = |S_{\geq i}|$, $a_i' = |S'_{\geq i}|$, $b_i = |T_{\geq i}|$, and $b_i' = |T'_{\geq i}|$. Define $c_i = \min(a_i,a_i')$ and $d_i = \max(b_i,b_i')$. By Theorem \ref{mainthm: hyperplane description}, we have that $x \in P \cap P'$ if and only if
     \[ c_i \leq x_i + \cdots + x_n \leq d_i\]
    and
     $0 \leq x_i \leq 1$ for all $i \in [n]$. We need to show there are sets $C$ and $D$ with $|C_{\geq i}| = c_i$ and $|D_{\geq i}| = d_i$. By construction, we have that either $b_i = b_{i-1} + 1$ or $b_i = b_{i-1}$ and likewise for $a,a'$, and $b'$. Therefore, we have that $d_i = \max(b_i,b_i')$ is either $d_{i-1}$ or $d_{i-1} + 1$. Indeed if $b_i > b_i'$, then $b_{i-1} \geq b_{i-1}'$ and so $d_i = d_{i-1}$ and likewise for the other cases. A similar argument shows that $c_i$ is either $c_{i-1}$ or $c_{i-1} + 1$. The desired subsets $C$ and $D$ are the subsets where $i \in C$  if and only if $c_i = c_{i-1} + 1$ and likewise for $D$.
     
     If $C$ is not less than $D$ in the type $C_n$ Gale order, then, by Lemma \ref{lem: Gale order and truncated sets}, there exists an index $i \in [n]$ with $|C_{\geq i}| > |D_{\geq i}|$. Then, the equation
      \[ |C_{\geq i}| \leq x_i + \cdots + x_n \leq |D_{\geq i}|\]
    has no solutions. Otherwise, we have that $C \leq D$, and the intersection is the feasible polytope of $\Delta[C,D]$.
\end{proof}

From the proof, it follows that the symmetric skew-diagram $\mu$ corresponding to the lattice path delta matroid whose polytope is the intersection $P \cap P'$ is the intersection of the two symmetric skew-diagrams $\lambda(q)/\lambda(p) \cap \lambda(q')/\lambda(p')$.

We will use the following lemma in a future section.

\begin{lemma}\label{lem: dimension of intersection of snake paths lpdm}
    Let $S$ and $S'$ be two distinct subsets of $[n-1]$. The intersection of the full-dimensional polytopes $P(\Delta[S, S \cup \{n\}])$ and $P(\Delta[S',S'\cup \{n\}])$ is not full-dimensional.
\end{lemma}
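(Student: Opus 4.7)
The plan is to derive the hyperplane description of the intersection directly from Theorem \ref{mainthm: hyperplane description} and then identify a forced equation that cuts the dimension down, or shows the intersection is empty.

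First I would record the key arithmetic setup. Since $S, S' \subseteq [n-1]$, the element $n$ is not in either, so for every $i \in [n]$ we have $|(S \cup \{n\})_{\geq i}| = |S_{\geq i}| + 1$ and similarly for $S'$. Then Theorem \ref{mainthm: hyperplane description} applied to each polytope and intersected gives that $x \in P(\Delta[S, S \cup \{n\}]) \cap P(\Delta[S', S' \cup \{n\}])$ if and only if $0 \leq x_j \leq 1$ for all $j \in [n]$ and
\[\max(|S_{\geq i}|, |S'_{\geq i}|) \leq x_i + \cdots + x_n \leq \min(|S_{\geq i}|, |S'_{\geq i}|) + 1\]
for all $i \in [n]$. (Alternatively, one could quote Proposition \ref{prop: intersections of LPDM are LPDM} directly to realize the intersection as $P(\Delta[C,D])$ for explicit $C,D$.)

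Next I would use that a subset of $[n-1]$ is recoverable from its truncation sizes (cf.\ Lemma \ref{lem: Gale order and truncated sets}, where $j \in A$ iff $|A_{\geq j}| > |A_{\geq j+1}|$). Since $S \neq S'$, there must exist some index $i_0 \in [n-1]$ where $|S_{\geq i_0}| \neq |S'_{\geq i_0}|$; by symmetry assume $|S_{\geq i_0}| \geq |S'_{\geq i_0}| + 1$. Then at $i = i_0$ the displayed inequality reads
\[|S_{\geq i_0}| \leq x_{i_0} + \cdots + x_n \leq |S'_{\geq i_0}| + 1 \leq |S_{\geq i_0}|.\]
If the gap $|S_{\geq i_0}| - |S'_{\geq i_0}|$ is strictly greater than $1$, the two bounds are inconsistent and the intersection is empty, hence certainly not full-dimensional. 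Otherwise, the inequality forces the equation $x_{i_0} + \cdots + x_n = |S_{\geq i_0}|$, which confines the intersection to an affine hyperplane in $\RR^n$, so its dimension is at most $n-1$. Equivalently, via Proposition \ref{prop: intersections of LPDM are LPDM} the intersection is $P(\Delta[C,D])$ with $|C_{\geq i_0}| = |D_{\geq i_0}|$, and Corollary \ref{cor: dimension formula subsets} then yields $\dim P(\Delta[C,D]) \leq n-1$.

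There is no real obstacle here beyond careful bookkeeping: the only subtlety is taking the correct orientation (max of lower bounds, min of upper bounds) when intersecting two hyperplane-described polytopes, and then recognizing that a single disagreement between $S$ and $S'$ already suffices to produce one forced equation in the description of the intersection.
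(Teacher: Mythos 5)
Your proof is correct and follows essentially the same route as the paper's: write both polytopes via the hyperplane description $|S_{\geq i}| \leq x_i + \cdots + x_n \leq |S_{\geq i}| + 1$, locate an index where the truncation counts of $S$ and $S'$ disagree, and observe that the resulting pair of constraints either is infeasible or forces an equality that confines the intersection to a hyperplane. If anything, your treatment is slightly more careful than the paper's in explicitly separating the empty case from the forced-equation case.
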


\begin{proof}
    Let $a_i = |S_{\geq i}|$ and $a_i' = |S'_{\geq i}|$. Then, it is immediate that the two polytopes are determined by the inequalities
     \[a_i \leq x_i + \cdots + x_n \leq a_i + 1 \quad \quad \text{and} \quad \quad a_i' \leq x_i + \cdots + x_n \leq a_i' + 1, \]
    for all $i \in [n]$. Since $S \not = S'$, there is an index $i$ such that $a_i \not = a_i'$. Then, for the intersection to be non-empty, we must have either
     \[ x_i + \cdots + x_n = a_i \quad \quad \text{or} \quad \quad x_i + \cdots + x_n = a_i'.\]
    Therefore, the intersection of the polytope is contained in a hyperplane and is not full-dimensional.
\end{proof}

\begin{proposition}\label{prop: face are again lattice path delta matroids}
    Let $S \leq T$ be subsets of $[n]$ with corresponding lattice paths $p$ and $q$. Then, any face of $P(\Delta[S,T])$ is the feasible polytope of a delta matroid $\Delta$ such that $\Delta$ is a direct sum of lattice path delta matroids.
\end{proposition}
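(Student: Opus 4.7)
My plan is to use the hyperplane description from Theorem \ref{mainthm: hyperplane description} and induct on the size $n$ of the ground set. The base case $n=0$ is vacuous. For the inductive step, since every proper face of $P(\Delta[S,T])$ is contained in some facet, it suffices to show that every facet is the feasible polytope of a direct sum of lattice path delta matroids on smaller ground sets; faces of such a direct sum decompose as products of faces of the factors, so the inductive hypothesis applied factor by factor yields the result.

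For a \emph{box facet}, obtained by tightening $x_i = 0$ (resp.\ $x_i = 1$), I would use Lemma \ref{lem: faces of C(S,T)} together with Proposition \ref{prop: deletion } (resp.\ its contraction dual) to identify the facet as $\{0\} \times P(\Delta[S,T]\setminus i)$ (resp.\ $\{1\} \times P(\Delta[S,T]/i)$). This is the feasible polytope of the direct sum of the trivial one-element LPDM at coordinate $i$ (with $S' = T' = \emptyset$ or $S' = T' = \{i\}$) with the corresponding deletion or contraction LPDM; both summands live on ground sets strictly smaller than $[n]$.

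For a \emph{sum facet}, obtained by tightening $x_{\geq i} = |S_{\geq i}|$, I would argue that the remaining defining inequalities decouple along the partition $[n] = [1,i-1] \sqcup [i,n]$: the inequalities with index $j \geq i$ constrain only $x_i,\ldots,x_n$, while those with index $j < i$ can be rewritten, using the tight equation, as pure constraints on $x_1,\ldots,x_{i-1}$. Applying Theorem \ref{mainthm: hyperplane description} to each half identifies the facet as the product of $P(\Delta[S \cap [i,n],\, T'])$ and $P(\Delta[S \cap [1,i-1],\, T''])$, where $|T'_{\geq j}| = \min(|T_{\geq j}|,\, |S_{\geq i}|)$ for $j \in [i,n]$ and $|T''_{\geq j}| = \min(|T_{\geq j}| - |S_{\geq i}|,\, i - j)$ for $j \in [1,i-1]$. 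This gives a direct sum of two LPDMs on smaller ground sets. The symmetric facet $x_{\geq i} = |T_{\geq i}|$ is handled analogously, with the roles of lower and upper bounds exchanged.

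The main obstacle will be the combinatorial verification that the subsets $T'$ and $T''$ actually exist, i.e.\ that the integer sequences $j \mapsto |T'_{\geq j}|$ and $j \mapsto |T''_{\geq j}|$ each decrease by exactly $0$ or $1$ as $j$ increases. Each such sequence is the minimum of two sequences with this property, so it suffices to rule out a jump of size $2$ at the transition between the two active branches of the $\min$. The needed bound $|T_{\geq i}| \geq |S_{\geq i}|$ is supplied by $S \leq T$ via Lemma \ref{lem: Gale order and truncated sets}, and combined with $|T_{\geq j}| - |T_{\geq j+1}| \in \{0,1\}$ this forces the two branches of each $\min$ to agree up to a shift of at most $1$ at their crossing, precluding any larger jump. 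I will also need to check the parallel Gale-order inequalities $S \cap [i,n] \leq T'$ and $S \cap [1,i-1] \leq T''$, which follow directly from $|S_{\geq j}| \leq |T_{\geq j}|$ and the trivial bound $|S \cap [j,i-1]| \leq i - j$.
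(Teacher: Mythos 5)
Your proposal follows essentially the same route as the paper: both reduce by induction to the facets of the hyperplane description from Theorem \ref{mainthm: hyperplane description}, handle the box facets $x_i=0$ and $x_i=1$ via deletion and contraction, and show that a sum facet $x_{\geq i}=|S_{\geq i}|$ (or $|T_{\geq i}|$) factors the polytope as a product of two smaller lattice path delta matroid polytopes, one on $[i,n]$ and one on $[1,i-1]$. The only difference is cosmetic: where the paper argues geometrically that tightening $x_{\geq i}$ forces the symmetric path through a fixed point and hence splits it into two independent pieces, you carry out the same splitting algebraically, substituting the tight equality into the remaining inequalities and exhibiting explicit parameters $T'$ and $T''$ together with the required Gale-order checks.
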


\begin{proof}
    By Theorem \ref{mainthm: hyperplane description}, we have that $P(\Delta[S,T])$ is determined by the inequalities
     \[ |S_{\geq i}| \leq x_i + \cdots + x_n \leq |T_{\geq i}|\]
    and $0 \leq x_i \leq 1$. By induction, it suffices to prove the statement for the faces of $P(\Delta[S,T])$ obtained by intersecting with a hyperplane of the form
     \[ x_i + \cdots +x_n=|S_{\geq i}|, \quad x_i + \cdots +x_n = |T_{\geq i}|, \quad x_i =0, \quad \text{or} \quad x_i = 1, \]
    for any $i \in [n]$.

    We first show that intersecting with any of these hyperplanes $H$ does not create new vertices. This happens since for any edge $e = \conv(v_1,v_2)$, where $v_1$ and $v_2$ are vertices of $P(\Delta[S,T])$, we have that either $e \subseteq H$ or $H \cap e = v_1$ or $v_2$. By Theorem \ref{thm: type B GGMS}, we know that the edge directions of $P(\Delta[S,T])$ are either $e_i - e_j$ or $e_i$.

    Let $\alpha$ be the linear functional $\alpha(v) = v_i + \cdots + v_{n}$. Let $e$ be an edge in $P(\Delta[S,T])$ with end points $v_1$ and $v_2$. We know that $\alpha(v_1)$ and $\alpha(v_2)$ are integers since the vertices are $0$-$1$ vectors. By the description of the possible edge directions alongside the $0$-$1$ condition, we have that $\alpha(v_1)$ and $\alpha(v_2)$ are either equal or differ by exactly $1$. Therefore, the solutions to $\alpha(v) = k$ for some integer $k$ can either be the entire edge or one of the end points. This implies that intersecting with the first or second hyperplane does not create new vertices. Similar arguments work for the third and fourth hyperplanes. Hence, to compute the face corresponding to the hyperplane, it suffices to compute which vertices satisfy the equalities. 
    
    For the hyperplane $H$ given by the first equality, the vertices of $P(\Delta[S,T])$ in the intersection correspond to the subsets $S \leq A\leq T$ such that $|A_{\geq i}| = |S_{\geq i}|$. In terms of paths, the path $r$ corresponding to $A$ intersects the path $p$ at the $(n + i)$th step. The intersection $H \cap P(\Delta[p,q])$ then consists of all symmetric lattice paths $p \leq r \leq q$ that cross through this point. The set of feasible sets corresponding to these symmetric lattice paths give the delta matroid obtained as the direct sum of two lattice path delta matroids. The second inequality is similar.

    Now consider the hyperplane $H$ determined by $x_i = 1$. The face $P(\Delta[S,T]) \cap H$ is determined by all the vertices whose corresponding subset $R$ contains $i$. Then, the face is the polytope of the direct sum of the lattice path delta matroid $[i,i]$ on ground set $\{i \}$ and the lattice path delta matroid $\Delta[S,T] /i$ on ground set $[n] \backslash i$. While this direct sum is not a lattice path delta matroid (with the standard order on $[n]$), each component is a lattice path delta matroid. The hyperplane corresponding to $x_i = 0$ is similar.

\end{proof}

\begin{example}
    Consider the lattice path delta matroid $\Delta[13,235]$. The feasible sets of this delta matroid are
     \[13,14,23,123,24,15,124,25,125,35,134,135. \]
    Then, the delta matroid whose polytope is the face of $P(\Delta[13,235])$ at $x_3 = 1$ consists of all feasible sets containing $3$ which are
     \[ 13,23,123,134,35,135.\]
    This is not a lattice path delta matroid. However, it is the direct sum of the lattice path delta matroid $[3,3]$ on ground set $\{3\}$ and the lattice path delta matroid $[1,15]$ on ground set $[5] \backslash \{3\}$. The former is a matroid consisting of only a coloop and the latter is the contraction $\Delta[13,235]/3$.
\end{example}

\section{Snake Paths and Triangulations}\label{Sec: Snake Paths and Triangulations}

We now move on to our main calculations of our paper. We will give a combinatorial formula for the volume of the matroid polytopes of lattice path delta matroids which is a type $C_n$ generalization of the work of Bidkhori \cite{Bid12}.

In this section, we will use Stanley's triangulation of the hypercube to compute the volume of a special type of lattice path delta matroid. These are the matroids corresponding to the type $C_n$-analogue of the snake graphs studied in \cite{Bid12}.

\begin{definition}
    A $n$-\textbf{symmetric snake path} is an interval of $n$-symmetric lattice paths $[p,q]$ such that the corresponding skew diagram does not contain a $2 \times 2$ box. We say that a symmetric snake path is \textbf{linked} if the paths only intersect at the points $(0,0)$ and $(n,n)$.
\end{definition}
By Proposition \ref{prop: dimension formula}, we have that a symmetric snake path is linked if and only if the corresponding matroid polytope is full-dimensional.

From this point onward, we will focus primarily on the linked case since the other cases can be obtained through direct sums of delta matroids or by taking homogeneous components and reducing to the type $A_n$ case. 

\subsection{Snake Paths as Toric Intervals}

We begin by describing which intervals $[S,T]$ of subsets of $[n]$ correspond to symmetric snake paths. 

\begin{definition}
    We say that an interval $[S,T]$ in the type $C_n$ Gale order on subsets of $[n]$ is a \textbf{toric interval} if the corresponding Lagrangian Richardson variety $X_{S}^T \subset \operatorname{LGr}(n,2n)(\RR) \cong U(n)/O(n)$ is a toric variety under the action of the maximal torus of $U(n)$.
\end{definition}

Tsukerman and Williams showed in \cite{Tsukerman2015} that this happens exactly when $\ell(T) - \ell(S) = \operatorname{dim}(P(\Delta[S,T]))$, where $\ell$ is the rank function on the type $C_n$ Gale order of $2^{[n]}$. 

We have a useful description of which subsets $S,T \subseteq [n]$ give toric intervals $[S,T]$. Recall that an interval $[S,T]$ is linked when $\dim(P(\Delta[S,T]))= n$.
\begin{proposition}\label{prop: subset description for linked toric intervals}
    Let $S \leq T$ be subsets of $[n]$. The interval $[S,T]$ is a linked toric interval if and only if $S \subseteq [n-1]$ and $T = S \cup \{n\}$. 
\end{proposition}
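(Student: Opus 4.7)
The plan is to translate both the linked condition and the toric condition into statements about the difference sequence $d_i \coloneqq |T_{\geq i}| - |S_{\geq i}|$ for $i \in [n]$, and then show these two conditions together force $T = S \cup \{n\}$ with $n \notin S$. By Corollary~\ref{cor: dimension formula subsets}, $[S,T]$ is linked if and only if $d_i \geq 1$ for every $i \in [n]$ (so that no index contributes to the ``collapse'' count $\ell$). By Tsukerman--Williams, $[S,T]$ is toric if and only if $\ell(T)-\ell(S) = \dim P(\Delta[S,T])$, and combined with linkedness this simplifies to $\ell(T) - \ell(S) = n$.

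The key algebraic identity I will use is the double-counting formula
\[
\ell(S) \;=\; \sum_{s \in S} s \;=\; \sum_{i=1}^n |S_{\geq i}|,
\]
which follows immediately from Lemma~\ref{lem: rank calculation of Gale order on subsets}. Applied to both $S$ and $T$, this gives
\[
\ell(T) - \ell(S) \;=\; \sum_{i=1}^n d_i.
\]
Combining with the two conditions above yields $\sum_{i=1}^n d_i = n$ with each $d_i \geq 1$, forcing $d_i = 1$ for all $i \in [n]$.

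From here I would finish by exploiting the telescoping relation $d_{i+1} - d_i = \mathbf{1}_{i \in S} - \mathbf{1}_{i \in T}$ for $i \in [n-1]$. Since $d_i$ is constantly $1$, we conclude $\mathbf{1}_{i \in S} = \mathbf{1}_{i \in T}$ for every $i \in [n-1]$, so $S$ and $T$ agree on $[n-1]$. Specializing $d_n = 1$ (where $|T_{\geq n}|, |S_{\geq n}| \in \{0,1\}$) forces $n \in T$ and $n \notin S$, hence $T = S \cup \{n\}$ with $S \subseteq [n-1]$, as claimed. The converse direction is a direct verification: if $T = S \cup \{n\}$ with $n \notin S$, then $d_i = 1$ for all $i \in [n]$, so $[S,T]$ is linked (dimension $n$) and $\ell(T) - \ell(S) = n = \dim P(\Delta[S,T])$, confirming toricity.

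There is no real obstacle here; the proof is essentially bookkeeping once one observes the identity $\ell(S) = \sum_i |S_{\geq i}|$, which packages the two-sided characterization of Gale order (Lemma~\ref{lem: Gale order and truncated sets}) together with the rank formula into a single sum. The only subtle point is verifying that toricity plus linkedness gives the clean equality $\ell(T) - \ell(S) = n$, which requires knowing that the dimension of $P(\Delta[S,T])$ is exactly $n$ in the linked case — but this is given by Corollary~\ref{cor: dimension formula subsets}.
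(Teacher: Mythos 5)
Your proof is correct, and your argument for the converse direction is genuinely different from (and cleaner than) the paper's. The forward direction is the same in both: from $T = S \cup \{n\}$ with $n \notin S$ one reads off $|T_{\geq i}| = |S_{\geq i}| + 1$ for all $i$, giving dimension $n$ by Corollary~\ref{cor: dimension formula subsets} and $\ell(T) - \ell(S) = n$ by Lemma~\ref{lem: rank calculation of Gale order on subsets}.

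For the converse, the paper first deduces $n \in T$ and $S \subseteq [n-1]$ from the case $i = n$, then invokes Hall's marriage theorem to produce an injection $\alpha$ with $s_i \leq t_{\alpha(i)}$, and finally combines this with the sum equality $\sum s_i = \sum t_j$ to force $T = S \cup \{n\}$. You bypass the matching argument entirely by observing the identity $\ell(S) = \sum_{i=1}^n |S_{\geq i}|$, which turns the two global hypotheses (linkedness gives $d_i \geq 1$ by Corollary~\ref{cor: dimension formula subsets} together with Lemma~\ref{lem: Gale order and truncated sets}; toricity gives $\sum_i d_i = n$) into the single local constraint $d_i \equiv 1$. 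The telescoping identity $d_{i+1} - d_i = \mathbf{1}_{i \in S} - \mathbf{1}_{i \in T}$ then forces $S$ and $T$ to agree on $[n-1]$, and $d_n = 1$ forces $n \in T \setminus S$. This is shorter and avoids combinatorial machinery that the paper's proof does not actually need; the double-counting identity is the missing piece that collapses the two conditions into one. Both proofs are valid, but yours is the more economical argument.
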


\begin{proof}
   We first show that the intervals $[S, T]$ are toric intervals when $S \subseteq [n-1]$ and $T = S \cup \{n\}$. By Lemma \ref{lem: rank calculation of Gale order on subsets}, we have that $\ell(S \cup \{n\}) - \ell(S) = n$. By definition, $|S_{\geq i}| < |T_{\geq i}|$ for all $i \in [n]$. By Corollary \ref{cor: dimension formula subsets}, the dimension of $P(\Delta[S,T])$ is $n - \ell = n$, where $\ell$ is the number of $i \in [n]$ such that $|S_{\geq i}| = |T_{\geq i}|$. Therefore, this is a toric interval with a full dimensional matroid polytope.

   Conversely, suppose that $[S,T]$ is a toric interval where $P(\Delta[S,T])$ is full-dimensional. The full-dimensionality condition implies that we must have $|S_{\geq i}| < |T_{\geq i}|$ for all $i \in [n]$ by Corollary \ref{cor: dimension formula subsets}. When $i = n$, we have that $|S_{\geq n}| < |T_{\geq n}|$, which implies that $n \in T$ and $S \subseteq [n-1]$. Write $S=\{s_1 < \cdots < s_k\}$ and $T = \{t_1< \cdots < t_{\ell} < n\}$. It remains to show that $T = S \cup \{n\}$.

   Consider the bipartite graph on $\{s_1,\ldots,s_{k}\} \sqcup\{ t_{1}, \ldots, t_{\ell}\}$, where there is an edge from $s_a$ to $t_b$ if and only if $s_a \leq t_b$. The condition $|S_{\geq i}| < |T_{\geq i}|$ for all $i \in [n]$ implies, through Hall's marriage theorem, that there is a perfect matching from $\{s_1, \ldots, s_k\}$ to $\{t_1, \ldots, t_{\ell}\}$. This gives an injective function $\alpha: [k] \to [\ell]$ such that $s_i \leq t_{\alpha(i)}$. Therefore, we have
    \[s_1 + \cdots + s_k \leq t_{\alpha(1)} + \cdots + t_{\alpha(k)}. \]

   Since $[S,T]$ is a toric interval, we have that $\ell(T) - \ell(S) = n$. By Lemma \ref{lem: rank calculation of Gale order on subsets}, we have that $\ell(T) = t_1 + \dots + t_\ell + n$ and $\ell(S) = s_1 + \cdots + s_k$, which implies
   
    \[s_1 + \cdots + s_k = t_1 + \cdots + t_{\ell}. \]

    Since, each $s_i$ and $t_i$ is a non-negative integer, we thus have  $t_{\alpha(i)} = s_i$ and if $\ell = k$. This shows that $T = S \cup \{n \}$.
\end{proof}

\begin{corollary}\label{cor: hyperplane description of toric intervals}
    Let $[S,T]$ be a linked toric interval for subsets in $[n]$. Then, $x \in P(\Delta([S,T])$ if and only if $0 \leq x_i\leq 1$ for all $i \in [n]$ and
     \[|S_{\geq i}| \leq x_i + \cdots + x_n \leq |S_{\geq i}| + 1, \]
    for all $i \in [n]$.
\end{corollary}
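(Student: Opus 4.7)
The plan is to deduce this corollary by combining two ingredients already in place: the general hyperplane description of $P(\Delta[S,T])$ from Theorem \ref{mainthm: hyperplane description}, and the explicit characterization of linked toric intervals as $[S, S \cup \{n\}]$ with $S \subseteq [n-1]$ from Proposition \ref{prop: subset description for linked toric intervals}.

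First I would invoke Proposition \ref{prop: subset description for linked toric intervals} to replace the abstract hypothesis ``$[S,T]$ is a linked toric interval'' by the concrete description $T = S \cup \{n\}$ with $S \subseteq [n-1]$. The key observation is then a simple counting identity: since $n \notin S$ and every $i \in [n]$ satisfies $i \leq n$, we have
\[
T_{\geq i} = (S \cup \{n\})_{\geq i} = S_{\geq i} \cup \{n\},
\]
where the union on the right is disjoint. Taking cardinalities gives $|T_{\geq i}| = |S_{\geq i}| + 1$ for every $i \in [n]$.

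Finally I would substitute this identity into the inequalities $|S_{\geq i}| \leq x_i + \cdots + x_n \leq |T_{\geq i}|$ supplied by Theorem \ref{mainthm: hyperplane description}, which immediately yields the claimed description. The $0 \leq x_i \leq 1$ constraints carry over verbatim from that theorem. There is no real obstacle here; the content is entirely in the two results being combined, and the corollary amounts to specializing the general hyperplane description once the shape of $T$ is known.
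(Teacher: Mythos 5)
Your proposal matches the paper's proof essentially verbatim: both invoke Proposition \ref{prop: subset description for linked toric intervals} to write $T = S \cup \{n\}$ with $S \subseteq [n-1]$, deduce $|T_{\geq i}| = |S_{\geq i}| + 1$, and substitute into Theorem \ref{mainthm: hyperplane description}. The only difference is that you spell out the disjoint-union justification for the cardinality identity, which the paper leaves implicit.
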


\begin{proof}
    By Proposition \ref{prop: subset description for linked toric intervals}, we have that $S \subseteq [n-1]$ and $T = S \cup \{ n \}$. This gives $|T_{\geq i}| = |S_{\geq i}| + 1$ which proves the corollary by Theorem \ref{mainthm: hyperplane description}.
\end{proof}

We can also interpret these results in terms of symmetric lattice paths.

\begin{proposition}\label{prop: toric if and only if snake path}
    Let $p \leq q$ be two symmetric lattice paths. Then $[p,q]$ corresponds to a linked toric interval if and only if it is a linked symmetric snake path.
\end{proposition}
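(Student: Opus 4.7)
The plan is to reduce to Proposition \ref{prop: subset description for linked toric intervals}, which characterizes linked toric intervals as pairs $S \leq T$ with $S \subseteq [n-1]$ and $T = S \cup \{n\}$. Thus I only need to show that the linked symmetric lattice path interval $[p,q]$ with corresponding subsets $S \leq T$ is a snake path if and only if $T = S \cup \{n\}$. The translation step, obtained by unpacking the position labels $\overline{n}, \overline{n-1}, \ldots, \overline{1}, 1, \ldots, n$, is that $T = S \cup \{n\}$ is equivalent to $p$ and $q$ taking the form $p = E w N$ and $q = N w E$ for a common middle word $w$ of length $2n - 2$: position $1$ (label $\overline{n}$) flips from $E$ to $N$, position $2n$ (label $n$) flips from $N$ to $E$, and all intermediate positions are labeled by elements of $\tilde{S}\setminus\{-n\} = \tilde{T}\setminus\{n\}$ in the same way in both paths.

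Let $x_r(k)$ denote the number of eastward steps in the first $k$ steps of a path $r$. Using the criterion that $(i,j) \in \lambda(r)$ iff $x_r(i+j-1) \leq i-1$, a direct computation shows that the number of skew cells of $\lambda(q)/\lambda(p)$ on the anti-diagonal $i+j = k+1$ equals $\max(0, x_p(k) - x_q(k))$. The linkedness hypothesis is exactly $x_p(k) > x_q(k)$ for every $k \in \{1, \ldots, 2n-1\}$. For the ($\Leftarrow$) direction, $p = EwN$ and $q = NwE$ give $x_p(k) - x_q(k) = 1$ for every intermediate $k$, so each such anti-diagonal contains exactly one skew cell. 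Any $2 \times 2$ sub-box would place two cells on a common anti-diagonal, so no such sub-box exists and $[p,q]$ is a snake path.

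For the ($\Rightarrow$) direction, I argue by contradiction. Suppose $x_p(k) - x_q(k) \geq 2$ for some intermediate $k$; such $k$ must lie in $\{2, \ldots, 2n-2\}$, since $x_r(1) \in \{0,1\}$ and $x_r(2n-1) \in \{n-1,n\}$ force the gap to be at most $1$ at the boundary. Setting $(i,j) = (x_q(k)+1, k - x_q(k))$, the claim is that the four cells $(i,j-1), (i+1,j-1), (i,j), (i+1,j)$ all lie in the skew, producing a $2 \times 2$ sub-box. The two cells on anti-diagonal $k+1$ are immediate from the gap hypothesis. For the cells on the adjacent anti-diagonals $k$ and $k+2$, I use the one-step monotonicity $|x_r(k \pm 1) - x_r(k)| \leq 1$ combined with the gap to verify the inequalities $x_q(k \pm 1) + 1 \leq i \text{ or } i+1 \leq x_p(k \pm 1)$ that place these cells in the skew. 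This contradicts the snake hypothesis, so $x_p(k) - x_q(k) = 1$ for every intermediate $k$. Since consecutive equal gaps force equal step types, $p$ and $q$ agree on positions $2$ through $2n-1$, giving $p = EwN$ and $q = NwE$, hence $T = S \cup \{n\}$. The main obstacle is the careful verification of the four cells in the reverse direction, where one must leverage monotonicity across three consecutive anti-diagonals rather than just the one where the gap is observed.
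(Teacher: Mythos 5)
Your proposal is correct and follows essentially the same route as the paper's proof: both reduce to Proposition \ref{prop: subset description for linked toric intervals} and identify the number of skew cells on each anti-diagonal with the gap between the two paths there (equivalently $|T_{\geq i}|-|S_{\geq i}|$), using linkedness to make each count at least one and the no-$2\times 2$ condition to make it at most one. The only real difference is that you explicitly verify that a gap of at least $2$ on some anti-diagonal produces a $2\times 2$ block of skew cells, a step the paper asserts without proof; your four-cell check is a worthwhile addition (note only that each cell requires \emph{both} the $\lambda(q)$-membership and $\lambda(p)$-non-membership inequalities, so the ``or'' in your last paragraph should be an ``and'').
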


\begin{proof}

    Say that $p = \alpha_1 \cdots \alpha_{2n}$ and $q = \beta_1 \cdots \beta_{2n}$. Let $S$ and $T$ be the corresponding subsets of $[n]$ in the delta model. For any subset $A \subseteq [n]$ with corresponding symmetric lattice path $r$, a direct calculation shows that $|A_{\geq i}|$ is the integer $a$ such that $r$ intersects the line $y = n + i -1 - x$ at the coordinates $(n-a,a + i - 1)$. Therefore, the difference $|T_{\geq i}| - |S_{\geq i}|$ is equal to the number of boxes in the skew-diagram $\lambda(q)/\lambda(p)$ whose interior intersects the line $y = n +i - 1 - x$.

    Suppose that $[p,q]$ is a linked snake path. By linkedness, the paths $p$ and $q$ only intersect at $(0,0)$ and $(n,n)$. Therefore, there is at least one box in $\lambda(q) / \lambda(p)$ whose interior intersects $y = n + i -1 -x$ for all $i \in [n]$. Since this is a snake path, this Young diagram does not contain a $2\times2$ box. Therefore, the number of boxes whose interior intersects $y = n + i - 1 -x$ is at most one. Combining these two, we have that $|T_{\geq i}| - |S_{\geq i}| = 1$ for all $i \in [n]$. It follows that $T = S \cup \{n \}$. By Proposition \ref{prop: subset description for linked toric intervals}, we see that $[S,T]$ is a toric interval.

    Conversely, if $[S,T]$ is a toric interval, then $|T_{\geq i}| - |S_{\geq i}| = 1$ for all $i \in [n]$, since $T = S \cup \{n\}$ by Proposition \ref{prop: subset description for linked toric intervals}. This implies that there is exactly one box in the skew diagram of $p$ and $q$ whose interior intersects $y = n +i - 1 -x$ for all $i \in [n]$. Thus $[p,q]$ is a linked symmetric snake path.
\end{proof}

We will need the following faces of the polytopes of delta matroids corresponding to symmetric snake paths.

\begin{proposition}\label{prop: faces of snake path lpdm}
    Let $[S,T] = [S,S \cup \{n\}]$ be a linked toric interval. Let $[A,B]$ be a subinterval. Then the polytope $P(\Delta[A,B])$ is a face of $P(\Delta[S,T])$.
\end{proposition}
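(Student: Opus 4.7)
The plan is to derive the proposition directly from the hyperplane description of $P(\Delta[S,T])$ given in Corollary \ref{cor: hyperplane description of toric intervals}. The key observation is that for a linked toric interval, $|T_{\geq i}| - |S_{\geq i}| = 1$ for every $i \in [n]$, so the sandwich inequalities $|S_{\geq i}| \leq x_{\geq i} \leq |S_{\geq i}| + 1$ leave essentially no room: either the lower or the upper bound must be tight.

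More precisely, first I would observe that for any subinterval $[A,B] \subseteq [S, S \cup \{n\}]$, we have $S \leq A \leq B \leq T$, so by Lemma \ref{lem: Gale order and truncated sets}, for every $i \in [n]$,
\[
|S_{\geq i}| \;\leq\; |A_{\geq i}| \;\leq\; |B_{\geq i}| \;\leq\; |S_{\geq i}| + 1.
\]
Hence each of $|A_{\geq i}|, |B_{\geq i}|$ equals either $|S_{\geq i}|$ or $|S_{\geq i}|+1$. Next, I would apply Theorem \ref{mainthm: hyperplane description} to $P(\Delta[A,B])$ and compare it with the description of $P(\Delta[S,T])$. The only extra constraints imposed on $P(\Delta[A,B])$ beyond those defining $P(\Delta[S,T])$ are, for each $i$,
\[
x_{\geq i} = |S_{\geq i}| \quad \text{if } |B_{\geq i}| = |S_{\geq i}|, \qquad
x_{\geq i} = |S_{\geq i}|+1 \quad \text{if } |A_{\geq i}| = |S_{\geq i}|+1.
\]
(The case $|A_{\geq i}| = |S_{\geq i}|+1$ and $|B_{\geq i}| = |S_{\geq i}|$ cannot occur, as it would contradict $A \leq B$.)

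To conclude, I would note that each such added equation defines a supporting hyperplane of $P(\Delta[S,T])$: the linear functional $x \mapsto x_{\geq i} = x_i + \cdots + x_n$ attains its minimum $|S_{\geq i}|$ and maximum $|S_{\geq i}| + 1$ on $P(\Delta[S,T])$ by Corollary \ref{cor: hyperplane description of toric intervals}. Therefore $P(\Delta[A,B])$ is cut out from $P(\Delta[S,T])$ by intersection with finitely many supporting hyperplanes, and is consequently a face of $P(\Delta[S,T])$.

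The argument is essentially bookkeeping; the only point that requires a moment of care is verifying that the combined equalities really do cut out the full polytope $P(\Delta[A,B])$ rather than something smaller. This is guaranteed by Theorem \ref{mainthm: hyperplane description}, which shows that the listed inequalities (with the equalities substituted in) are already a complete hyperplane description, so no additional facets are needed.
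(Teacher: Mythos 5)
Your proof is correct and follows essentially the same route as the paper's: squeeze $|A_{\geq i}|, |B_{\geq i}|$ between $|S_{\geq i}|$ and $|S_{\geq i}|+1$ via Lemma \ref{lem: Gale order and truncated sets}, then use the hyperplane description from Theorem \ref{mainthm: hyperplane description} (equivalently Corollary \ref{cor: hyperplane description of toric intervals}) to see that $P(\Delta[A,B])$ is obtained from $P(\Delta[S,T])$ by turning some of the sandwich inequalities into equalities, each of which is a supporting hyperplane. The paper packages the same case analysis using the set $J = \{j : |A_{\geq j}| = |B_{\geq j}|\}$, but the content is identical.
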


\begin{proof}
    

    Let $[A, B] \subsetneq [S, T] = [S, S \cup \{n\}]$, and let $J = \{j \in [n] : |A_{\geq j}| = |B_{\geq j}|\}$. Then by applying Lemma \ref{lem: Gale order and truncated sets} to the sequence of relations $S \leq A \leq B \leq S \cup \{n\}$, we have $|S_{\geq i}| \leq |A_{\geq i}| \leq |B_{\geq i}| \leq |S_{\geq i}| + 1$ for any $i \in [n]$. Note that if $|A_{\geq i}| < |B_{\geq i}|$ for all $i \in [n]$, we must have $|S_{\geq i}| = |A_{\geq i}|$ and $|B_{\geq i}| = |S_{\geq i}| + 1$ for every $i \in [n]$. This implies $A = S$ and $B = T$, which violates our assumption. Thus $J \neq \emptyset$. Moreover, we observe that $|S_{\geq i}| = |A_{\geq i}| < |B_{\geq i}| = |S_{\geq i}| + 1$ for $i \not\in J$.
    
    We can then conclude for every subinterval $[A, B]$ of the linked toric interval $[S, T]$, $P(\Delta[A, B])$ can be obtained by modifying some inequalities for $P(\Delta[S, T])$, as described in Corollary \ref{cor: hyperplane description of toric intervals}, to equalities. In particular, by Theorem \ref{mainthm: hyperplane description}, $x \in P(\Delta[A, B])$ if and only if $0 \leq x_i \leq 1$ for all $i \in [n]$ and
    \[
    \begin{cases}
        |S_{\geq i}| \leq x_i + \cdots + x_n \leq |S_{\geq i}| + 1 & \text{if } i \not\in J \\
        x_i + \cdots + x_n = |A_{\geq i}| & \text{if } i \in J 
    \end{cases}
    \]
    Since $|A_{\geq i}| \in \{|S_{\geq i}|, |S_{\geq i}| + 1\}$, we know that $P(\Delta[A, B])$ is a face of $P(\Delta[S, T])$.

\end{proof}

\subsection{Triangulations of Snake Path Delta Matroid Polytopes}

Recall that a \textbf{unimodular triangulation} of a polytope $P$ is a collection of lattice simplices $\nabla_1,\ldots,\nabla_k$ where $\dim(\nabla_i) = \dim(P)$ such that the union of the simplices is $P$, the intersection of any two $\nabla_j$ and $\nabla_j$ is a common face of both or empty, and each simplex has volume $1/d!$ where $d$ is its dimension.

There are many natural unimodular triangulations of the hypercube $[0,1]^n$. The most straight-forward unimodular triangulation is given by the collection of $n!$ simplices
\[ \nabla_{w} = \{ \left(x_{1}, \dots, x_{n} \right) \in \mathbb{R}^{n} : 0 \leq x_{w\left(1\right)} \leq  \ldots \leq x_{w\left(n\right)} \leq 1, \sum_{i=1}^{n}x_i = 1 \}\]
for $w \in S_n$.

In his study of Eulerian numbers, Stanley defined a second unimodular triangulation that will be more useful to us \cite{Sta77}. Consider the function $\psi:[0,1]^{n} \to [0,1]^{n}$ defined coordinate wise as:

\[ \psi_{i}\left(x_{1}, \dots, x_{n} \right) =x_{1} + \dots + x_{i} - \lfloor x_{1} + \dots + x_{i} \rfloor. \]
This map is piecewise-linear, bijective (outside of a set of measure $0$), and volume-preserving. The inverse map $\phi^{-1}$ is well-defined on the union of the interiors of the simplices and so it transforms the standard triangulation above into a new triangulation. Recall that the hypersimplex $\Delta_{k,n}$ is the polytope
 \[\Delta_{k,n} = \operatorname{conv}(x \; \lvert \; \sum_{i=1}^n x_i = k, 0 \leq x_i \leq 1), \]
and that it is given as $[0,1]^n \cap \{x \in \RR^n \; \lvert \; \sum_{i=1}^n x_i = k\}$.

\begin{theorem}[Stanley]\label{thm: Stanley's triangulation}

    The function $\psi^{-1}$ transforms the triangulation of the hypercube $[0,1]^{n}$ by $\nabla_{w}$'s into a unimodular triangulation such that intersecting each simplex by $\{x \in \RR^n \; \lvert \; \sum_{i=1}^{n} x_i = k\}$ gives a unimodular triangulation of $\Delta_{k,n}$.
\end{theorem}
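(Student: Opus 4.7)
The plan is to exploit that $\psi$ is piecewise-linear and piecewise-unimodular on the hypercube. For each $x \in [0,1]^n$ let $s_i(x) = x_1 + \cdots + x_i$; the vector of integer parts $(\lfloor s_1 \rfloor, \ldots, \lfloor s_n \rfloor)$ is weakly increasing from $0$ with consecutive jumps in $\{0,1\}$, so it is encoded by its jump set $I \subseteq [n]$. Let $R_I$ denote the closure of the region of $[0,1]^n$ on which this jump set equals $I$; the $R_I$ tile $[0,1]^n$ up to measure zero. On $R_I$ the map $\psi$ is affine and its linear part is the upper-triangular $n \times n$ matrix of all ones, which has determinant $1$, so $\psi|_{R_I}$ is a unimodular affine bijection onto its image.

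Next I would identify $\psi(R_I)$ inside the codomain hypercube. Translating the constraints $x_{i+1} \in [0,1]$ into $y$-coordinates, one finds $\psi(R_I)$ is cut out by the inequalities $y_i \leq y_{i+1}$ when $i+1 \notin I$ and $y_i \geq y_{i+1}$ when $i+1 \in I$. This is precisely the union of the standard simplices $\nabla_w$ as $w$ ranges over permutations whose descent set (under the appropriate convention) matches the jumps of $I$. Since every $w \in S_n$ has a unique descent set, each $\nabla_w$ lies in a unique $\psi(R_I)$, and so $\psi^{-1}(\nabla_w)$ is a single $n$-simplex of volume $1/n!$ by the unimodularity above. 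The collection $\{\psi^{-1}(\nabla_w) : w \in S_n\}$ thus has total volume $1 = \operatorname{vol}([0,1]^n)$ and covers the hypercube; for the face-intersection property, I would check that if $\nabla_w$ and $\nabla_{w'}$ meet in a common face, the same holds for their preimages since $\psi$ is affine on the regions containing them.

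For the hypersimplex slice, I would observe that $H_k = \{x : \sum_i x_i = k\}$ corresponds to $s_n = k$, i.e., in $y$-coordinates $y_n + \lfloor s_n \rfloor = k$. Since $y_n \in [0,1]$ this forces $\lfloor s_n \rfloor = |I| \in \{k-1, k\}$, so on each $R_I$ the intersection with $H_k$ is the preimage of a facet of the ambient hypercube ($y_n = 0$ when $|I|=k$ or $y_n = 1$ when $|I|=k-1$) intersected with the corresponding $\nabla_w$. Each nonempty $\psi^{-1}(\nabla_w) \cap H_k$ is therefore an $(n-1)$-dimensional lattice simplex of volume $1/(n-1)!$ by unimodularity, and together they cover $\Delta_{k,n}$.

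The main obstacle will be justifying the common-face condition across the discontinuities of $\psi$: when $x$ crosses the boundary between two regions $R_I$ and $R_{I'}$, the map $\psi$ jumps (fractional parts drop from $1$ to $0$), and the two preimages on either side must share a face in order to form a triangulation rather than merely a cover. This is handled by noting that on such a boundary one of the partial sums $s_j$ equals an integer exactly, so the two affine branches of $\psi$ send that boundary to two parallel facets of $[0,1]^n$ ($y_j = 0$ and $y_j = 1$); tracing the identification through $\nabla_w$ shows that the adjacent preimages $\psi^{-1}(\nabla_w)$ and $\psi^{-1}(\nabla_{w'})$ meet along a common codimension-one face, which completes the triangulation check.
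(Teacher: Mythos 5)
The paper does not supply a proof of this statement; it is attributed to Stanley and cited from \cite{Sta77}, so there is no in-paper argument to compare your attempt against. Evaluated on its own terms, your reconstruction follows the standard line of Stanley's original argument: partition $[0,1]^n$ into the closures $R_I$ of the regions where all the floors $\lfloor s_i \rfloor$ are locally constant, observe that $\psi$ restricted to each $R_I$ is affine with unimodular linear part, show that $\psi(R_I)$ is cut out by the consecutive inequalities $y_i \lessgtr y_{i+1}$ determined by $I$ and hence is a union of $\nabla_w$'s with prescribed descent pattern, and conclude that each $\psi^{-1}(\nabla_w)$ is a unimodular $n$-simplex whose union over $w$ exhausts the cube. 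One small slip: the linear part of $\psi$ on $R_I$ is the partial-sum matrix $M_{ij}=1$ for $j\leq i$, which is lower triangular, not upper triangular; the determinant is of course still $1$.

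Where the attempt is still a sketch is exactly where you flag it: the face-to-face condition across the discontinuity locus of $\psi$. Showing that two adjacent preimages $\psi^{-1}(\nabla_w)$ and $\psi^{-1}(\nabla_{w'})$ lying in different regions $R_I$ meet in a common face requires more than noting that the boundary $\{s_j\in\mathbb{Z}\}$ is sent to $\{y_j=0\}$ by one affine branch and to $\{y_j=1\}$ by the other; one must trace how the face of $\nabla_w$ on $\{y_j=1\}$ and the face of $\nabla_{w'}$ on $\{y_j=0\}$ pull back to literally the same subset, and handle the lower-dimensional strata where several $s_j$'s hit integers simultaneously. The same care is needed for the hypersimplex slice: your observation that $H_k$ corresponds to $y_n = 0$ when $|I|=k$ and (in the limit) $y_n = 1$ when $|I| = k-1$ is the right mechanism, but you should say explicitly that one takes closures of the affine preimages and that the resulting $(n-1)$-simplices again match face-to-face. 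These gaps are routine to close and are all treated in \cite{Sta77}, but a reader should not mistake the sketch for a complete argument.

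Unrelated to your attempt, note that the paper's displayed definition of $\nabla_w$ erroneously includes the constraint $\sum_i x_i = 1$; the intended definition of the order simplex has no such equality.
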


When restricted to the simplex $\nabla_{w}$, the map $\psi^{-1} \left(x_{1}, \dots, x_{i} \right)=\left(y_{1}, \dots, y_{n} \right)$ is given by $x_{1} = y_{1}$ and

\[y_{i+1} = \begin{cases}

    x_{i + 1} - x_{i} & \text{if  } w^{-1}\left(i+1\right) > w^{-1}\left(i\right) \\ 
    x_{i+1} - x_{i} + 1 & \text{if } w^{-1}\left(i+1\right) < w^{-1}\left(i\right) 

\end{cases}\]

For our purposes, we will consider the triangulation given by the map $\phi = r \circ \psi^{-1}$ where $r: \RR^n \to \RR^n$ is the linear transformation $x_{i} \mapsto x_{n-i+1}$. For the following result, let $\operatorname{asc}(w)$ be the ascents of $w$ and $\operatorname{des}(w)$ be its descents.

\begin{proposition}\label{prop: ascent description of simplicies}
Let $[S,T] = [S, S\cup\{n\}]$ be a toric interval.  Then, $\phi(\nabla_{w}) \subseteq P(\Delta[S,S \cup \{n\}])$ if and only if $S$ is the ascent set of $w$. 
\end{proposition}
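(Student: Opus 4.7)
The plan is to use convexity to reduce the containment to a vertex check, then to extract an arithmetic condition on $S$ from the hyperplane description in Corollary \ref{cor: hyperplane description of toric intervals}, and finally to identify that condition with the ascent set of $w$.

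First I would note that the recursion defining $\psi^{-1}$ is affine linear on $\nabla_w$ and $r$ is linear, so $\phi(\nabla_w)$ is again a simplex whose vertices are $\phi(v_k)$, where $v_k = \sum_{j > k} e_{w(j)}$, for $k = 0, 1, \ldots, n$, are the vertices of $\nabla_w$. A quick case check on the formula for $\psi^{-1}$, split by whether $w^{-1}(m)$ and $w^{-1}(m+1)$ lie above or below $k$, shows that each $\psi^{-1}(v_k)$, and hence each $\phi(v_k)$, is a $0$-$1$ vector. By convexity of the feasible polytope, the containment $\phi(\nabla_w) \subseteq P(\Delta[S, S \cup \{n\}])$ is equivalent to $\phi(v_k) \in P(\Delta[S, S \cup \{n\}])$ for every $k$.

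Next I would telescope the recursion $y_{m+1} = x_{m+1} - x_m + [w^{-1}(m+1) < w^{-1}(m)]$ to obtain the partial sum identity
\[y_1 + y_2 + \cdots + y_m = x_m + D_m,\]
where $D_m$ counts the descents of $w^{-1}$ in positions $1, \ldots, m - 1$. Applying the coordinate reversal $r$ and substituting $x = v_k$, the inequalities from Corollary \ref{cor: hyperplane description of toric intervals} become
\[|S_{\geq i}| \;\leq\; [w^{-1}(n-i+1) > k] + D_{n-i+1} \;\leq\; |S_{\geq i}| + 1\]
for each $i \in [n]$. As $k$ runs over $\{0, 1, \ldots, n\}$ the indicator attains both values $0$ (at $k = n$) and $1$ (at $k = 0$), so the double inequality holds for every $k$ if and only if $D_{n-i+1} = |S_{\geq i}|$ for all $i \in [n]$.

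The final step is to translate this equality system into the language of ascent sets. Taking successive differences collapses it to $[m - 1 \in \operatorname{Des}(w^{-1})] = [n - m + 1 \in S]$ for $m = 2, \ldots, n$, which pins $S$ down uniquely in terms of $w$; a direct reindexing then identifies this $S$ with $\operatorname{Asc}(w)$, using the standard relationship between descents of $w^{-1}$ and ascents of $w$ transported through the coordinate-reversal $r$. The main obstacle is the telescoping bookkeeping step, together with the careful final index translation; once the partial-sum identity is in hand and both indicator values are attained, everything else is routine convex geometry.
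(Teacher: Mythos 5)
Your proof is correct in substance and follows essentially the same computation as the paper: telescope the piecewise-affine formula for $\phi$ on $\nabla_w$ to get $\sum_{j\geq i} y_j = x_{n-i+1} + D_{n-i+1}$ and match against the hyperplane description of Corollary \ref{cor: hyperplane description of toric intervals}. The one genuine structural difference is in the ``only if'' direction: you restrict to the $0$-$1$ vertices $\phi(v_k)$ and observe that both indicator values are attained, which forces the exact equalities $D_{n-i+1} = |S_{\geq i}|$ and hence pins down $S$ uniquely, making the argument self-contained; the paper instead proves only the containment for $S$ determined by $w$ and then appeals to Lemma \ref{lem: dimension of intersection of snake paths lpdm} (distinct snake-path polytopes have lower-dimensional intersection) to rule out containment in any other $P(\Delta[S,S\cup\{n\}])$. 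Your variant is arguably cleaner since it avoids that lemma. The only soft spot is the final sentence: the set your equalities determine is $S=\{\,n-j : j \in \operatorname{Des}(w^{-1})\,\}$, and the claim that ``a direct reindexing'' identifies this with $\operatorname{Asc}(w)$ is asserted rather than checked (for $w=\mathrm{id}$ one gets $S=\emptyset$ while $\operatorname{Asc}(w)=[n-1]$ under the paper's convention, so the identification is really with a relabelled inverse-descent set). The paper's own proof makes the identical leap, and since the relevant enumerative corollaries only use that $w\mapsto S(w)$ is equidistributed with the ascent statistic, nothing downstream breaks; still, you should spell out that last reindexing explicitly rather than calling it standard.
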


\begin{proof} The function $\phi(x_1,\ldots,x_n) = (y_1,\ldots,y_n)$ is defined explicitly within each hypersimplex $\nabla_{w}$ by $y_{n} = x_{1}$ and

\[y_{n - i} = \begin{cases}

    x_{i + 1} - x_{i} & \text{if  } w^{-1}\left(i+1\right) > w^{-1}\left(i\right) \\ 
    x_{i+1} - x_{i} + 1 & \text{if } w^{-1}\left(i+1\right) < w^{-1}\left(i\right) .

\end{cases}\]

Let $\operatorname{des}(w)_i$ be the number of descents of $w$ less than or equal to $i$. Then, the sum of the last $n - i + 1$ coordinates of $y$ is 

\begin{align*}
\sum_{j = i}^{n} y_{j} &= x_{1} + \sum_{j = 1}^{n - i} \left( x_{j+1} - x_{j} \right) +\operatorname{des}(w^{-1})_{n - i + 1}. \\
&= x_{n-i} + \operatorname{des}(w^{-1})_{n-i+1}.
\end{align*}
Hence, the image $\phi(\nabla_w)$ is contained in the polytope determined by the inequalities
\[ \text{des}(w^{-1})_{n+1-i} \leq x_i + \dots + x_n \leq \text{des}(w^{-1})_{n+1-i} + 1,\]
for all $i \in [n]$.

Let $S$ be the set of ascents of $w$. Then, $\operatorname{des}(w^{-1})_{n+1-i} = |S_{\geq i}|$. The inequalities above become
 \[|S_{\geq i}| \leq x_i + \cdots + x_n \leq |S_{\geq i}| + 1. \]
By Corollary \ref{cor: hyperplane description of toric intervals}, we see that $\phi(\nabla_w) \subseteq P(\Delta[S,S \cup \{n\}])$. Since the intersection of the polytopes of lattice path delta matroids corresponding to two different symmetric snake paths is not full-dimensional by Lemma \ref{lem: dimension of intersection of snake paths lpdm}, we have that each $\phi(\nabla_w)$ is contained in exactly one $P(\Delta[S,S \cup \{n\}])$. Therefore, $\phi(\nabla_w) \subseteq P(\Delta[S,S\cup\{n\}])$ if and only if $\operatorname{asc}(w) = S$.

\end{proof}

As a consequence, we obtain one of our main results:
\begin{theorem}[Theorem \ref{mainthm: LPDM subdivision into snake paths}]\label{thm: Triangulation of Toric Polytopes } 
    Let $[S, T]$ be a linked toric interval. Then the map $\phi$ induces an unimodular triangulation of $P(\Delta[S,T])$.    
\end{theorem}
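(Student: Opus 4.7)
The plan is to realize the desired triangulation of $P(\Delta[S,T])$ as a subcollection of Stanley's triangulation of the hypercube $[0,1]^n$, and then inherit the three defining properties of a unimodular triangulation from the ambient triangulation. First, by Proposition \ref{prop: subset description for linked toric intervals}, the hypothesis that $[S,T]$ is a linked toric interval forces $T = S \cup \{n\}$ with $S \subseteq [n-1]$, so the polytope in question is $P(\Delta[S, S \cup \{n\}])$ and it is full-dimensional in $\RR^n$.

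Set
\[ \mathcal{T}_S := \{ \phi(\nabla_w) \; \lvert \; w \in S_n, \; \operatorname{asc}(w) = S \}. \]
By Proposition \ref{prop: ascent description of simplicies}, every simplex in $\mathcal{T}_S$ lies inside $P(\Delta[S, S \cup \{n\}])$, and conversely each simplex of Stanley's triangulation whose ascent set differs from $S$ is sent into a different snake-path polytope. The core step is the reverse inclusion $P(\Delta[S, S \cup \{n\}]) \subseteq \bigcup \mathcal{T}_S$. To establish it, I take a point $x$ in the interior of $P(\Delta[S, S \cup \{n\}])$; since this polytope is $n$-dimensional and cut out by strict versions of $0 \leq x_i \leq 1$ together with Corollary \ref{cor: hyperplane description of toric intervals}, the point $x$ is also interior to $[0,1]^n$. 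By Theorem \ref{thm: Stanley's triangulation} there is a unique $w \in S_n$ with $x$ in the interior of $\phi(\nabla_w)$, and Proposition \ref{prop: ascent description of simplicies} places $\phi(\nabla_w)$ inside $P(\Delta[\operatorname{asc}(w), \operatorname{asc}(w) \cup \{n\}])$. If $\operatorname{asc}(w) \neq S$, then $x$ would sit in the intersection of two distinct full-dimensional snake-path polytopes, which by Lemma \ref{lem: dimension of intersection of snake paths lpdm} has dimension strictly less than $n$, contradicting that $x$ is interior to $P(\Delta[S, S \cup \{n\}])$. Thus $\operatorname{asc}(w) = S$, so the interior of $P(\Delta[S, S \cup \{n\}])$ is covered by $\mathcal{T}_S$; passing to closures yields $P(\Delta[S, S \cup \{n\}]) = \bigcup \mathcal{T}_S$.

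To finish, both the pairwise face-intersection property and the normalized volume $1/n!$ for each maximal simplex transfer directly from Stanley's triangulation to $\mathcal{T}_S$, since any two simplices in $\mathcal{T}_S$ already appear as two simplices of the ambient triangulation of $[0,1]^n$. Combined with the covering argument above, this yields a unimodular triangulation of $P(\Delta[S,T])$ whose maximal simplices are in bijection with permutations of $[n]$ with ascent set exactly $S$, which also gives the volume corollary. The main obstacle is the interior-point covering argument, and it pivots entirely on the low-dimensional intersection of distinct snake-path polytopes supplied by Lemma \ref{lem: dimension of intersection of snake paths lpdm}; Proposition \ref{prop: ascent description of simplicies} does the heavy lifting of matching simplices to polytopes, and Stanley's theorem supplies the remaining triangulation properties for free.
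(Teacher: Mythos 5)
Your proposal is correct and follows essentially the same route as the paper: cover the interior of $P(\Delta[S,S\cup\{n\}])$ by simplices of Stanley's triangulation, use Proposition \ref{prop: ascent description of simplicies} to match each simplex to a unique snake-path polytope, and invoke Lemma \ref{lem: dimension of intersection of snake paths lpdm} to rule out simplices with the wrong ascent set. Your write-up is in fact slightly more careful than the paper's in making explicit the passage to closures and the inheritance of the face-intersection and unimodularity properties from the ambient triangulation.
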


\begin{proof}
    Since the unimodular simplices $\phi(\nabla_w)$ triangulate the hypercube $[0,1]^n$ and $P(\Delta[S,S\cup\{n\}]) \subseteq [0,1]^n$, we have that each point $x$ in the interior of $P(\Delta[S,T])$ is contained in some simplex $\phi(\nabla_w)$. 

    By Lemma \ref{lem: dimension of intersection of snake paths lpdm}, the interiors of polytopes corresponding to symmetric snake paths do not intersect. Hence, the simplex $\phi(\nabla_w)$ is not contained in any $P(\Delta[p,q])$ except possibly $P(\Delta[S,T])$. By the previous proposition, we see that each $\phi(\nabla_w)$ is contained in at least one $P(\Delta[p,q])$. This implies that the union of the $\phi(\nabla_w)$ contained in $P(\Delta[S,T])$ is the entire polytope.
\end{proof}

\begin{corollary}\label{cor: volume of toric intervals ascents}
    The volume of $P(\Delta[S,S \cup \{n\}])$ is $\frac{1}{n!}$ times the number of permutations with ascent set $S$.
\end{corollary}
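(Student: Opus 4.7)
The plan is to derive the volume formula as an immediate consequence of the unimodular triangulation already established in Theorem \ref{thm: Triangulation of Toric Polytopes }. The strategy has two ingredients that have essentially been assembled in the preceding material: first, each maximal simplex in the triangulation has volume $1/n!$; second, the maximal simplices lying inside $P(\Delta[S, S \cup \{n\}])$ are in bijection with the permutations whose ascent set is $S$.

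For the first ingredient, I would argue that the $n!$ simplices $\nabla_w$ partition the unit cube $[0,1]^n$ up to a measure-zero overlap, so $\operatorname{vol}(\nabla_w) = 1/n!$ for every $w \in S_n$. Since $\phi = r \circ \psi^{-1}$ is the composition of a coordinate reflection (a linear isometry) and the piecewise-linear volume-preserving map $\psi^{-1}$ used to rearrange Stanley's triangulation, $\phi$ itself preserves volume, hence $\operatorname{vol}(\phi(\nabla_w)) = 1/n!$ as well. For the second ingredient, I would invoke Proposition \ref{prop: ascent description of simplicies} directly: $\phi(\nabla_w) \subseteq P(\Delta[S, S \cup \{n\}])$ precisely when $\operatorname{asc}(w) = S$.

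With these two facts, the computation is straightforward. By Theorem \ref{thm: Triangulation of Toric Polytopes }, the simplices $\{\phi(\nabla_w) : \operatorname{asc}(w) = S\}$ form a triangulation of $P(\Delta[S, S \cup \{n\}])$, so the volumes add:
\[
\operatorname{vol}\bigl(P(\Delta[S, S \cup \{n\}])\bigr) \;=\; \sum_{\substack{w \in S_n \\ \operatorname{asc}(w) = S}} \operatorname{vol}(\phi(\nabla_w)) \;=\; \frac{\#\{w \in S_n : \operatorname{asc}(w) = S\}}{n!}.
\]
There is no serious obstacle here; the substantive content has already been discharged in Proposition \ref{prop: ascent description of simplicies} and Theorem \ref{thm: Triangulation of Toric Polytopes }, and this corollary is a one-line accounting of simplex volumes.
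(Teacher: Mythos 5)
Your proposal is correct and matches the paper's (implicit) argument exactly: the corollary is stated as an immediate consequence of Theorem \ref{thm: Triangulation of Toric Polytopes } together with Proposition \ref{prop: ascent description of simplicies}, with the unimodularity of the simplices $\phi(\nabla_w)$ supplying the factor $1/n!$. Nothing further is needed.
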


\subsection{Relation to Bidkhori's Triangulation}

Our Theorem \ref{mainthm: snake path triangulation} is Lagrangian matroid generalization of the following result of Bidkhori.

\begin{theorem}\cite{Bid12}
    Let $[p,q]$ be a linked snake path with lattice path matroid $M= M[p,q]$. Then, $P(M)$ has a triangulation where each maximal dimensional simplex has volume $\frac{1}{(n-1)!}$.
\end{theorem}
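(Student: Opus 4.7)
The plan is to adapt the type $C_n$ argument (Theorem 4.12 in the excerpt) to the type $A$ setting, since the authors note in Remark 3.14 that Bidkhori's original proof is incomplete and since the type $C_n$ proof in this paper is already set up to specialize. Let $[p,q]$ be a linked $(a,b)$-snake path with $n = a+b$, and let $S \leq T$ in $\binom{[n]}{a}$ be the subsets corresponding to $p$ and $q$. The matroid polytope $P(M[S,T])$ lies in the hypersimplex $\Delta_{a,n}$.

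First I would establish the hyperplane description $P(M[S,T]) = \Delta_{a,n} \cap \{ x : |S_{\geq i}| \leq x_i + \cdots + x_n \leq |T_{\geq i}|\}$. This is the type $A$ analogue of Theorem A, and the inductive two-case argument of Theorem 3.10 carries over verbatim (indeed, the entire point of Remark 3.14 is that the two-case argument fills the gap in Bidkhori's Lemma 3.8). Next, I would translate the linked snake condition into the set-theoretic statement $|T_{\geq i}| - |S_{\geq i}| \in \{0,1\}$ for all $i \in [n]$, with strict inequality at enough indices to guarantee that $P(M[S,T])$ is full-dimensional within $\Delta_{a,n}$ (i.e., has dimension $n-1$). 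This mirrors Propositions 4.5 and 4.7; the argument about boxes in the skew shape not containing a $2 \times 2$ subbox is identical.

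Now I would apply the map $\phi = r \circ \psi^{-1}$ of Theorem 4.11 (Stanley's triangulation) to the hypercube $[0,1]^n$ and restrict to $\Delta_{a,n}$. By Stanley's theorem this gives a unimodular triangulation of $\Delta_{a,n}$ into $(n-1)$-simplices $\phi(\nabla_w) \cap \Delta_{a,n}$ indexed by permutations $w \in S_n$ with a prescribed number of descents, each of volume $\tfrac{1}{(n-1)!}$. The same computation as in Proposition 4.12 shows that $\phi(\nabla_w) \cap \Delta_{a,n}$ lies in $P(M[S,T])$ iff the ascent set of $w$ is consistent with the step pattern of the snake path, in the sense that $|\operatorname{asc}(w) \cap [i,n-1]| = |S_{\geq i}|$ whenever $|T_{\geq i}| = |S_{\geq i}|$ and no constraint is imposed otherwise. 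Finally, the linked-snake analogue of Lemma 4.3 ensures that distinct linked snake path matroid polytopes intersect in lower dimension, so each Stanley simplex is contained in at most one $P(M[S',T'])$; combined with the hyperplane description, this forces the simplices compatible with $[S,T]$ to fill $P(M[S,T])$ exactly.

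The main obstacle is the combinatorial identification in the last step, since in type $A$ one does not have the clean parametrization $[S, S \cup \{n\}]$ available in type $C_n$; instead one must track which coordinates of the inequalities $|S_{\geq i}| \leq x_i + \cdots + x_n \leq |T_{\geq i}|$ are tight on each Stanley simplex $\phi(\nabla_w)$ and verify that the resulting constraint on $\operatorname{asc}(w)$ has a solution exactly for the permutations needed to cover $P(M[S,T])$. Once this matching is established, the conclusion that every maximal simplex has volume $\tfrac{1}{(n-1)!}$ is automatic from unimodularity in $\Delta_{a,n}$.
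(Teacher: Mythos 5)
Your plan takes a genuinely different route from the paper's, but it leaves a gap that you yourself flag without closing. The paper's proof does not re-run Stanley's triangulation directly in type $A$: it embeds $M[p,q]$ as the $k$th homogeneous component of a linked \emph{symmetric} snake path delta matroid $\Delta[r,s]$ (translate the $(n-k)\times k$ skew diagram so its northeast corner sits at $(n,n)$, reflect across $y = n - x$, and extend to a linked interval), so that $P(M[p,q]) = P(\Delta[r,s]) \cap \{\sum_i x_i = k\}$ by Proposition \ref{prop: truncation hyperplane description}. Theorem \ref{mainthm: snake path triangulation} already triangulates $P(\Delta[r,s])$ by Stanley simplices $\phi(\nabla_w)$, and the hypersimplex-restriction property in Theorem \ref{thm: Stanley's triangulation} guarantees that slicing by $\{\sum_i x_i = k\}$ yields a unimodular triangulation of the slice, each maximal piece being a facet of some $\phi(\nabla_w)$ of normalized volume $1$. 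The type $C_n$ detour is precisely what lets one reuse the clean $[R, R\cup\{n\}]$ parametrization and the ascent-set bijection of Proposition \ref{prop: ascent description of simplicies}, rather than redoing that bookkeeping in type $A$.

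Your approach calls for exactly that redone bookkeeping, and this is where the hole is. You assert that $\phi(\nabla_w) \cap \Delta_{k,n} \subseteq P(M[S,T])$ iff $|\operatorname{asc}(w)_{\geq i}| = |S_{\geq i}|$ whenever $|T_{\geq i}| = |S_{\geq i}|$, with ``no constraint otherwise,'' but you do not prove this, and as stated it is not obviously the correct condition. Each full Stanley simplex $\phi(\nabla_w)$ lies in exactly one type $C$ snake polytope $P(\Delta[\operatorname{asc}(w),\operatorname{asc}(w)\cup\{n\}])$; what you actually need to characterize is which of those snake polytopes have $k$th homogeneous component contained in $M[S,T]$, and then to verify both covering and facial disjointness inside the hypersimplex. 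That is a genuine combinatorial argument, not a formality, and deferring it with ``once this matching is established'' leaves the proof incomplete. The paper's lift-and-restrict argument does this work for you, which is why it does not attempt to carry Proposition \ref{prop: ascent description of simplicies} over to type $A$ directly.
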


In fact, we can recover her result by considering the homogeneous components of the lattice path delta matroids of symmetric snake paths.

\begin{proof}
    Let $k$ be the rank of $M$ and $n$ be the size of the ground set. The snake path $[p,q]$ corresponds to a skew-diagram with no $2 \times 2$ boxes contained in the box with southeast corner $(0,0)$ and northeast corner $(n-k,k)$

    Translate the skew-diagram so that the southwest corner of the containing box is $(k,n-k)$ and the northeast corner is $(n,n)$. Then reflect the skew-diagram along the line $y = n - x$. This gives a symmetric skew-diagram which corresponds to a (non-linked) symmetric snake path $[p',q']$ such that $p'$ and $q'$ meet exactly at $(0,0), (k,n-k)$, and $(n,n)$. Let $[r,s]$ be a linked symmetric snake path whose skew-diagram contains $[p',q']$.
    
    By the lattice path description of homogeneous components from Proposition \ref{prop: truncation in terms of lattice paths}, we have that the $k$-th homogeneous component of $\Delta[r,s]$ is exactly $M[p,q]$. Combining this with Proposition \ref{prop: truncation hyperplane description}, we have that
     \[P(\Delta[r,s]) \cap \{x \in \RR^n \; \lvert \; \sum_{i=1}^n x_i = k\} = P(M[p,q]). \]
    By Theorem \ref{thm: Stanley's triangulation}, the triangulation of $P(\Delta[r,s])$ induces a triangulation of $P(M[p,q])$ where each maximal dimensional piece is a facet of the simplices from Stanley's triangulation and, hence has volume $\frac{1}{(n-1)!}$.
\end{proof}

\subsection{Maximal Chains and Volumes}

If $X_P$ is the toric variety corresponding to the polytope $P$, then the normalized volume of the lattice polytope $P$ is equal to the degree of the variety $X_P$. Since Richardson varieties of toric intervals $X_{S}^T$ are the toric varieties of the corresponding polytope $P(\Delta[S,T])$, one can calculate the volume from the degree of the variety. In the special case where $S = \emptyset$, we have that $X_S^T$ is a Schubert variety, for which the degree has been heavily studied. In particular, there is the following standard result:

\begin{theorem}
    The degree of the Schubert variety $\Omega^w \subseteq \operatorname{LGr}_{n,2n}$ is the number of maximal chains in the poset $W/W_i$ from $e$ to $w$.
\end{theorem}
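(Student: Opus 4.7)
The plan is a standard Schubert calculus argument exploiting the fact that $\operatorname{LGr}_{n,2n}$ is cominuscule. For a projective variety $X \subseteq \mathbb{P}^N$ of dimension $d$, the degree is the intersection number $\int_X H|_X^{\,d}$, where $H$ is the hyperplane class. Under the paper's conventions, $\dim \Omega^w = \ell(w)$ (the rank of $w$ in $W/W_I$), and the restriction of the Plücker hyperplane class agrees with the ample generator of $\operatorname{Pic}(\operatorname{LGr}_{n,2n})$. Thus the goal is to evaluate
\[
\deg \Omega^w \;=\; \int_{\operatorname{LGr}_{n,2n}} [\Omega^w] \smile H^{\,\ell(w)}
\]
inside $H^{\ast}(\operatorname{LGr}_{n,2n})$.

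First, I would recall that $\operatorname{LGr}_{n,2n} = \mathrm{Sp}_{2n}/P$ is the cominuscule quotient $W/W_I$ of type $C_n$, so its Picard group is cyclic and $H$ is the Schubert divisor class $[\Omega^s]$ corresponding to the unique cover $s$ of the bottom element in the Gale order. This identification is standard from the Borel presentation of $H^{\ast}(G/P)$ and can be checked by pairing with a Schubert curve.

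Next, I would invoke the Chevalley (Monk) formula, which for a cominuscule Grassmannian takes the especially clean form
\[
H \smile [\Omega^v] \;=\; \sum_{v' \,\lessdot\, v} [\Omega^{v'}],
\]
with the sum running over lower covers of $v$ in the Bruhat order on $W/W_I$, i.e.\ the type $C_n$ Gale order (see Fulton--Woodward, or Buch--Kresch--Tamvakis for the symplectic case). All structure constants are $0$ or $1$ precisely because the fundamental weight defining $P$ is cominuscule.

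Finally, iterate the Chevalley formula $\ell(w)$ times starting from $[\Omega^w]$: each application replaces a Schubert class by the sum of its lower covers, producing
\[
H^{\,\ell(w)} \smile [\Omega^w] \;=\; N \cdot [\Omega^e],
\]
where $N$ counts saturated chains $w \gtrdot v_1 \gtrdot \cdots \gtrdot v_{\ell(w)} = e$. Reversing each chain identifies $N$ with the number of maximal chains from $e$ to $w$ in $W/W_I$, and since $[\Omega^e]$ is the class of a point (integrating to $1$), the result follows. The only nontrivial input is the cominuscule Chevalley formula; the iteration and the reversal of chains are formal. If one preferred a more geometric route, a Bott--Samelson resolution of $\Omega^w$ would produce the same chain count directly from the combinatorics of reduced expressions for $w$ in $W/W_I$.
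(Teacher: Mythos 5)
The proposal contains a genuine error. The claim that the Chevalley formula on $\operatorname{LGr}_{n,2n}$ is multiplicity-free --- ``$H \smile [\Omega^v] = \sum_{v' \lessdot v} [\Omega^{v'}]$'' with all coefficients $0$ or $1$ --- is \emph{false} for the Lagrangian Grassmannian. Multiplicity-freeness of the Chevalley formula is a feature of \emph{minuscule} flag varieties, and you have conflated minuscule with cominuscule. The weight $\omega_n = e_1 + \cdots + e_n$ defining $\operatorname{LGr}_{n,2n}$ is cominuscule (it has coefficient $1$ in the highest root $2e_1$) but it is \emph{not} minuscule: for a short root $\beta = e_i + e_j$ one computes $\langle \omega_n, \beta^\vee \rangle = 2$. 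The Chevalley coefficient for a cover $v' \lessdot v$ in $W^P$ with $v' = v s_\beta$ is therefore $1$ when $\beta$ is a long root $2e_i$ and $2$ when $\beta$ is a short root $e_i + e_j$. Iterating the Chevalley formula produces a \emph{weighted} chain count, not the plain number of maximal chains.

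You can see the failure already in $\operatorname{LGr}(2,4)$: the type $C_2$ Gale order is the chain $\emptyset < \{1\} < \{2\} < \{1,2\}$ with a single maximal chain, yet $\operatorname{LGr}(2,4)$ is a smooth three-dimensional quadric in $\mathbb{P}^4$, whose Pl\"ucker degree is $2$. Indeed the Pieri rule for $\operatorname{LGr}$ gives $\sigma_1 \cdot \sigma_1 = 2\sigma_{(2)}$ and $\sigma_1 \cdot \sigma_{(2)} = \sigma_{(2,1)}$, so $\sigma_1^3 = 2\sigma_{(2,1)}$. This also shows that the Pl\"ucker-degree interpretation of the stated theorem cannot be right as written; the paper quotes the theorem as a background fact without proof (its own contribution, Proposition~\ref{cor: volume calculation}, establishes the chain count equals the normalized volume of the feasible polytope for \emph{toric} intervals, a different statement), so there is no in-paper argument to compare against. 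But the specific logical input you rely on --- a multiplicity-free cominuscule Chevalley formula --- is not available for type $C_n$, and the proof as written does not go through.
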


With a little work, one can prove similar results for Richardson varieties. We now verify that our unimodular triangulation implies the same result for toric intervals.

\begin{proposition}
 \label{cor: volume calculation}
    Let $[S,T] = [S, S \cup \{n\}]$ be a toric interval. The volume of $P(\Delta[S, T])$ is the number of maximal chains in the interval $[S, T]$ divided by $n!$. 
    
\end{proposition}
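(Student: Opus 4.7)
The plan is to combine Corollary \ref{cor: volume of toric intervals ascents} with a bijection between the simplices in the triangulation from Theorem \ref{thm: Triangulation of Toric Polytopes } and the maximal chains in $[S, S \cup \{n\}]$. By Corollary \ref{cor: volume of toric intervals ascents}, $\operatorname{vol}(P(\Delta[S,T])) = \frac{1}{n!}|\{w \in S_n : \operatorname{asc}(w) = S\}|$, so the proposition reduces to showing that the number of permutations of $[n]$ with ascent set $S$ equals the number of maximal chains in $[S, S \cup \{n\}]$.

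To establish this identity, I would read off the vertices of each simplex in the triangulation in their natural order. For a permutation $w$ with $\phi(\nabla_w) \subseteq P(\Delta[S,T])$, the vertices of $\nabla_w$ are the partial sums $v_k = e_{w(n)} + \cdots + e_{w(n-k+1)}$ for $k = 0, 1, \ldots, n$, which form a maximal chain in the Boolean lattice on $[n]$. Because $\phi$ is affine-linear on $\nabla_w$, the increments $\phi(v_{k+1}) - \phi(v_k)$ can be computed directly from the explicit coordinate formula for $\phi$, and a short case analysis on the value of $w(n-k)$ shows that each increment equals either $e_1$ (an ``add $1$'' cover) or $e_{j+1} - e_j$ for some $j \in [n-1]$ (a ``shift $j \to j+1$'' cover). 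Consequently, each $\phi(v_k)$ is the indicator of a subset $A_k \subseteq [n]$, each pair $(A_k, A_{k+1})$ is a cover in the type $C_n$ Gale order, and the endpoints satisfy $A_n = A_0 \cup \{n\}$, so the vertex sequence yields a maximal chain $A_0 \lessdot A_1 \lessdot \cdots \lessdot A_n$ in the Gale order.

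The containment $\phi(\nabla_w) \subseteq P(\Delta[S, T])$ forces $A_0 = S$ and $A_n = S \cup \{n\}$, so the resulting chain lies in the interval $[S, S \cup \{n\}]$. The map $w \mapsto (A_0 \lessdot \cdots \lessdot A_n)$ is injective since a simplex is determined by its vertices, and surjective since the simplices $\phi(\nabla_w)$ triangulate $P(\Delta[S, T])$ and any maximal chain in the interval is recovered as the vertex sequence of a unique such simplex. The main obstacle is the case analysis in the increment computation: carefully tracking how the $+1$ corrections built into the piecewise-linear map $\phi$ transform the unit bump $e_{w(n-k)}$ into a valid cover relation of the Gale order. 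An alternative approach is to prove the combinatorial identity directly by induction, matching the recursion for permutations with ascent set $S$ against the recursion for maximal chains in $[S, S \cup \{n\}]$ obtained by conditioning on the first cover.
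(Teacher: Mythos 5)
Your reduction is the same as the paper's first step: both invoke Corollary \ref{cor: volume of toric intervals ascents} to reduce the proposition to the identity ``number of permutations with ascent set $S$ equals number of maximal chains in $[S,S\cup\{n\}]$.'' From there your route genuinely diverges. The paper proves the identity purely combinatorially: it encodes a maximal chain by recording, for each $\ell$, the step at which $\ell-1$ moves to $\ell$ (or $1$ is added), shows the resulting word is a permutation with descent set $S$, and constructs an explicit inverse. You instead read the chain off the geometry: the vertices of $\nabla_w$ are the partial sums $v_k$, the linear part of $\phi$ sends $e_m$ to $e_{n-m+1}-e_{n-m}$ (and $e_n$ to $e_1$), so consecutive vertices of $\phi(\nabla_w)$ differ by a Gale-order cover move, and $A_0=S$ follows from the computation in Proposition \ref{prop: ascent description of simplicies}. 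This is correct and arguably more conceptual, since it exhibits each unimodular simplex of the triangulation as literally being the chain polytope of a maximal chain; the paper's bijection is essentially the shadow of yours.

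There is one under-justified step: surjectivity. You assert that ``any maximal chain in the interval is recovered as the vertex sequence of a unique such simplex'' and cite the triangulation property, but the fact that the simplices cover $P(\Delta[S,T])$ says nothing about which chains of subsets occur as vertex sequences. The fix is short but must be written: given a maximal chain $S=A_0\lessdot\cdots\lessdot A_n=S\cup\{n\}$, the $n$ increments lie in the linearly independent set $\{e_1,\,e_2-e_1,\ldots,e_n-e_{n-1}\}$ and sum to $e_n$, so each type occurs exactly once; recording at which step the image of $e_m$ occurs reconstructs a unique permutation $w$ with $\phi(v_k)=e_{A_k}$ for all $k$, and $A_0=S$ then forces $\operatorname{asc}(w)=S$. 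With that paragraph added, your argument is complete and is a valid alternative to the paper's proof.
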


\begin{proof} By Corollary \ref{cor: volume of toric intervals ascents}, it suffices to show that the number of permutations with ascents in $S$ is in bijection with the maximal chains in $[S, S \cup{n}]$. We will actually construct a bijection of maximal chains with permutations with descents in $S$.

By Lemma \ref{lem: Gale order and truncated sets}, we have that $A \leq B$ if and only if $|A_{\geq i}| \leq |B_{\geq i}|$. From the description of the rank function in Lemma \ref{lem: rank calculation of Gale order on subsets}, we have that $A \lessdot B$ if and only if $B = A \backslash \ell \cup \{\ell + 1 \}$ for some $\ell \in A$. Consider a maximal chain
 \[S_0 = S \lessdot S_1 \lessdot \cdots \lessdot S_n = T \]
in $[S,T]$. 

For any integer $\ell\not = 1 \in [n]$, there is at most one index $i$ such that $\ell \in S_i$ and $S_{i+1} = S_i \backslash \ell \cup \{\ell +1\}$ and for $\ell =1$, there is at most one index where $1 \not \in S_i$ and $S_{i+1} = S_i \cup \{1\}$. Otherwise, we would have $|T_{\geq \ell}| \geq |S_{\geq \ell}| + 2$ which contradicts $T = S \cup \{n\}$. Since this maximal chain has length $n$, we see that for each integer $\ell$ there is exactly one index $c_{\ell}$ that satisfies that property.

Consider the sequence $(a_1,\ldots, a_n)$ where $a_i = c_{i}$. Let $\pi$ be the permutation with $\pi(i) = a_i$. It is clear that each maximal chain is determined uniquely by this sequence. Therefore, we have defined an injective map from maximal chains of $[S,T]$ to $S_n$.

We now show that $\pi$ has descent set $S$. Let $i \in S$, then we want to show that $a_i > a_{i+1}$. At each step of the maximal chain, a unique element increases by $1$ or the element $1$ is added. We refer to the step $S_{i} = S_{i-1} \backslash \ell \cup \{\ell+1\}$ as $\ell$ moving to $\ell + 1$. The integer $\ell-1$ moves to $\ell$ exactly at step $a_{\ell}$ and $1$ is added at step $a_1$. Notice that since $i \in S$, we have that $i-1$ cannot move to $i$ until $i$ moves to $i+1$. Therefore, the step $a_{i+1}$ where $i$ moves must happen before the step $a_{i}$ where $i-1$ moves. If $i$ is $1$, then $1$ cannot be added until the first $1$ moves. This implies that $a_i > a_{i+1}$ for $i \in S$.

Likewise, if $i \not \in S$, then $i$ cannot move into $i+1$ until $i-1$ moves into $i$. Therefore, $a_{i} < a_{i+1}$. All together, this shows that the map from maximal chains to permutations has image contained in the permutations with descents in $S$.

Conversely, if $\pi$ is a permutation with descents in $S$, let $(a_1, \ldots, a_n)$ be the corresponding sequence where $a_i = \pi(i)$ and define $b_i = \pi^{-1}(i)$. Then, define the subsets $S_0 = S$ and
 \[S_i = S_{i-1} \backslash \{b_i-1\} \cup \{b_i\}\]
if $b_i \not = 1$ and
 \[S_i = S_{i-1} \cup \{1\}, \]
otherwise, for $i = 1, n$. So, $a_\ell$ represents the step at which integer $\ell-1$ moved to $\ell$ and $b_i$ is the element $\ell$ such that $\ell-1$ moved to $\ell$ at step $i$. For this to be useful, we need to show that $b_k -1 \in S_k$ for $b_k \not = 1$ and $b_k \not \in S_k$. The case where $b_k = 1$ is immediate since position $b_1 = \pi^{-1}(1)$ is never a descent of $\pi$ and so it is not in $S = S_0$.

Now we show that $b_k \not \in S_k$ for the remaining values. If position $b_{k}$ is a descent in $\pi$, then we know that position $b_{k} + 1$ has a smaller value and $b_{k} \in S_0$. In other words, $a_{b_k} > a_{b_k + 1}$. This means that the integer $b_k$ moved to $b_k + 1$ at an earlier step which implies that $b_k \not \in S_{k-1}$.  If position $b_k$ is not a descent in $\pi$, then we know that $b_k \not \in S_k$ and since the move at step $k$ is the unique move where $k-1$ moves to $k$.

For the inclusion $b_k -1 \in S_{k-1}$, if position $b_k - 1$ is a descent then $b_k -1 \in S_0$ and since step $k$ is the only move where $b_k -1$ moves, we have that $b_k -1 \in S_{k-1}$. If $b_k - 1$ is not a descent, then $a_{b_k - 1} < a_{b_k}$ which implies that $b_k -2$ moves to $b_k -1$ before step $k$. Therefore, $b_k -1 \in S_{k-1}$.

We finally show that the sequence $S_0, S_1, \ldots, S_n$ actually corresponds to a chain in the toric interval $[S,T]$. Since each element $\ell \in [n]$ is added exactly once in this sequence, we have that $|({S_n})_{\geq i}| = |S_{\geq i}| + 1$ which implies that $S_n = S \cup \{n\} = T$. Further, it is clear from definitions that $S_i \lessdot S_{i+1}$ which implies that this is a maximal chain. These two constructions are inverses and so we have demonstrated a bijection.

\end{proof}

\begin{example} We give examples of the bijections constructed in the previous proof for $S = \{135\}$ and $n = 6$. In one direction, the chain
 \[135 \lessdot 136 \lessdot 236 \lessdot 1236 \lessdot 1246 \lessdot 1346 \lessdot 1356\]
corresponds to the permutation $\pi$ given in one-line notation by $(3,2,5,4,6,1)$. The descents of this permutation are at positions $1,3,$ and $5$.

In the other direction, given the permutation $\pi = (5,3,6,1,4,2)$, we have that $\pi^{-1} = (4,6,2,5,1,3)$ and the corresponding chain is
 \[135 \lessdot 1\textbf{4}5 \lessdot 14\textbf{6} \lessdot \textbf{2}46 \lessdot 2\textbf{5}6 \lessdot \textbf{1}256 \lessdot 1\textbf{3}56.\]
\end{example}

\section{Decomposition of Lattice Path Delta Matroid Polytopes}\label{Sec: Decompositions LPDM}

We now turn to studying the polytopes of general lattice path delta matroids. Our main strategy will be to decompose these polytopes into polytopes corresponding to symmetric snake paths lattice path matroids.

\subsection{Subdivision into Toric Intervals}

\begin{theorem} Let $\Delta[S,T]$ be a linked lattice path delta matroid. Then, $P = P(\Delta[S,T])$ has a polytope decomposition where each maximal polytope is the feasible polytope of the delta matroid of a linked symmetric snake path.

Explicitly,
\[ P(\Delta[S,T])  = \bigcup_{S \leq R \leq T \backslash \{n\}} P(\Delta[R, R\cup\{n\}]). \] 

\end{theorem}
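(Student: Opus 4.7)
The plan is to establish both containments in the displayed equality and then verify the polytope decomposition axiom (that pairwise intersections are common faces of the pieces). Linkedness of $[S,T]$ forces $|T_{\geq i}| > |S_{\geq i}|$ for all $i \in [n]$, so $n \in T$, $n \notin S$, and $S \leq T \setminus \{n\}$ in the Gale order on subsets of $[n-1]$; in particular the indexing on the right-hand side is meaningful. The containment $\supseteq$ is then a direct comparison of hyperplane descriptions: for $R \in [S, T\setminus\{n\}]$, Corollary \ref{cor: hyperplane description of toric intervals} cuts out $P(\Delta[R, R\cup\{n\}])$ by $|R_{\geq i}| \leq x_{\geq i} \leq |R_{\geq i}| + 1$ and $0 \leq x_i \leq 1$, and the relations $S \leq R \leq T\setminus\{n\}$ translate via Lemma \ref{lem: Gale order and truncated sets} into $|S_{\geq i}| \leq |R_{\geq i}|$ and $|R_{\geq i}| + 1 \leq |T_{\geq i}|$, so any such $x$ also satisfies the defining inequalities of $P(\Delta[S,T])$ in Theorem \ref{mainthm: hyperplane description}.

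For the reverse containment, given $x \in P(\Delta[S,T])$ I would build $R$ by a floor operation. Setting $r_i = x_i + \cdots + x_n$ and $\rho_i = \lfloor r_i \rfloor$, let $R = \{i \in [n-1] : \rho_i > \rho_{i+1}\}$, so that $|R_{\geq i}| = \rho_i - \rho_n$. For $x$ in the relative interior of $P(\Delta[S,T])$ the strict inequalities $|S_{\geq i}| < r_i < |T_{\geq i}|$ force $|S_{\geq i}| \leq \rho_i \leq |T_{\geq i}| - 1$ by integrality, and $0 < x_n < 1$ gives $\rho_n = 0$, so $R$ lies in $[S, T\setminus\{n\}]$. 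Meanwhile $r_i - r_{i+1} = x_i \in [0,1]$ makes the floors a non-increasing chain with step size at most $1$ satisfying $\rho_i \leq r_i \leq \rho_i + 1$, which by Corollary \ref{cor: hyperplane description of toric intervals} puts $x \in P(\Delta[R, R\cup\{n\}])$. The main obstacle is the boundary case, where $r_i$ may equal $|T_{\geq i}|$ (or, dually, $|S_{\geq i}|$) exactly, making the naive floor produce an $R$ that violates $R \leq T\setminus\{n\}$. I would handle this by observing that the right-hand side of the displayed equality is a finite union of closed polytopes and therefore closed, then approximating each boundary point by a sequence of interior points and passing to a subsequence along which the constructed $R$ is constant, giving containment in the corresponding $P(\Delta[R, R\cup\{n\}])$ by closedness.

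Finally, for the decomposition axiom, for distinct $R, R' \in [S, T\setminus\{n\}]$ Proposition \ref{prop: intersections of LPDM are LPDM} identifies $P(\Delta[R, R\cup\{n\}]) \cap P(\Delta[R', R'\cup\{n\}])$ with $P(\Delta[C, D])$ for an interval $[C,D]$ contained in both $[R, R\cup\{n\}]$ and $[R', R'\cup\{n\}]$ (one reads off $|C_{\geq i}| = \max(|R_{\geq i}|, |R'_{\geq i}|)$ and $|D_{\geq i}| = \min(|R_{\geq i}|, |R'_{\geq i}|) + 1$ from the proof there), and Proposition \ref{prop: faces of snake path lpdm} then guarantees that $P(\Delta[C,D])$ is a common face of both. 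Each piece is full-dimensional by Proposition \ref{prop: subset description for linked toric intervals}, so these are indeed the maximal polytopes of the decomposition.
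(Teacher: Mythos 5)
Your proof is correct and follows essentially the same route as the paper: the containment $\supseteq$ and the common-face property of pairwise intersections are argued identically (via the hyperplane description, Proposition \ref{prop: intersections of LPDM are LPDM}, and Proposition \ref{prop: faces of snake path lpdm}), and your covering argument via floors of the partial sums $x_i + \cdots + x_n$ is exactly the paper's construction of the snake cell containing $x$. The only divergence is in handling boundary points where some partial sum is an integer: the paper builds a possibly non-linked snake interval there and embeds its skew diagram into a linked one inside $[S,T]$, whereas you run the floor construction only on interior points and recover the boundary from closedness of the finite union; both are valid.
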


\begin{proof}

By Theorem \ref{thm: hyperplane description}, the polytope $P$ is defined by the inequalities

\[ |S_{\geq i}| \leq x_{\geq i} \leq |T_{\geq i}|  \quad \text{for } 1 \leq i \leq n. \]
First, we show that for each $S \leq R \leq T \backslash \{n\}$, the polytope $P(\Delta[R,R\cup \{n\}])$ is contained in $P$. 

Since $R \leq T \backslash \{n\}$, we have that $|R_{\geq i}| \leq | (T \backslash \{n\})_{\geq i}| < |T_{\geq i}|$. Therefore, $|S_{\geq i}| \leq |R_{\geq i}| < |T_{\geq i}|$ for all $i \in [n]$. If $x$ is in $P(\Delta[R,R \cup \{n\}])$ ,which by Corollary \ref{cor: hyperplane description of toric intervals} is the same as $x$ satisfying
 \[|R_{\geq i}| \leq x_i + \cdots x_n \leq |R_{\geq i}| + 1,\]
then $x$ satisfies the inequalities defining $P$.

Now, we show that these polytopes cover $P$. Let $x \in P$ and for each $i\in [n]$ let $a_i$ be the largest integer and $b_i$ the smallest integer such that $a_i \leq x_i + \cdots x_n \leq b_i$. It is clear that $a_i \geq a_{i+1}$ and $b_i \geq b_{i+1}$ since $x_i > 0$ for all $i$. The condition $x_i \leq 1$ implies that $a_i$ is either equal to $a_{i+1}$ or $a_{i+1} + 1$ and $b_i$ is either $b_{i+1}$ or $b_{i+1} + 1$. Further, we have that $b_i - a_i$ is either $0$ or $1$.

Let $A$ and $B$ be the subsets defined by $i \in A$ and $i \in B$ if and only if $a_i = a_{i+1} + 1$ $b_i = b_{i+1} + 1$, respectively. By the construction of $a$ and $b$, we see that $|A_{\geq i}| = a_i$ and $|B_{\geq i}| = b_i$. The interval $[A,B]$ corresponds to a (possibly not-linked) symmetric snake path. Our construction ensures that $x \in P(\Delta[A,B])$.

Since $P(\Delta[S,T])$ is full-dimensional, we know by Proposition \ref{prop: dimension formula}, that the corresponding symmetric lattice paths only intersect at $(0,0)$ and $(n,n)$. This implies that there is a linked symmetric snake path contained in the skew diagram of $[S,T]$ that contains the symmetric snake path corresponding to $[A,B]$. This linked symmetric snake path gives a toric interval $[R,R \subseteq \cup \{n\}]$ such that
\[S \leq R \quad \quad \text{and} \quad \quad R \cup \{n \} \leq T \iff R \leq T \backslash \{n\}. \]
We have shown that $x \in P(\Delta[A,B]) \subseteq P(\Delta[R,R \cup \{n\}])$.

All that remains to show is that the polytopes $P(\Delta[R,R \cup \{n\}])$ form a polytope decomposition. By Proposition \ref{prop: intersections of LPDM are LPDM}, the intersection of any two of these polytopes $P_1$ and $P_2$ is again a lattice path delta matroid $\Delta[A,B]$. Further, the skew-diagram of $[A,B]$ is obtained by intersecting the skew-diagrams of the two polytopes we are intersecting. This implies that $[A,B]$ is a symmetric snake path whose skew-diagram is contained in the skew-diagrams of the two toric intervals. This implies that $[A,B]$ is a subinterval of $[S,T]$ and then Proposition \ref{prop: faces of snake path lpdm} implies that $P(\Delta[A,B])$ is a face of both $P_1$ and $P_2$.

\end{proof}

\begin{corollary}
    Let $\Delta[S,T]$ be a linked lattice path delta matroid. Let $m$ be the number of permutations with ascent set $R$ such that $S \leq R \leq T \backslash \{n\}$. Then,
     \[\operatorname{vol}(P(\Delta[S,T])) = \frac{m}{n!}. \]
\end{corollary}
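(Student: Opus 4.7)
The plan is to directly combine the polytope decomposition established in the previous theorem with the volume formula for the snake-path pieces given by Corollary \ref{cor: volume of toric intervals ascents}. Since a polytope subdivision has the property that the full-dimensional pieces intersect only along faces (which are lower-dimensional and hence measure zero), the volume of $P(\Delta[S,T])$ equals the sum of the volumes of its maximal pieces.

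Concretely, I would proceed as follows. First, invoke the theorem just proved:
\[ P(\Delta[S,T]) = \bigcup_{S \leq R \leq T \setminus \{n\}} P(\Delta[R, R \cup \{n\}]), \]
where the union is a polytope decomposition. Since the intersection of any two distinct maximal pieces $P(\Delta[R,R\cup\{n\}])$ and $P(\Delta[R',R'\cup\{n\}])$ is a common face and hence has dimension strictly less than $n$ by Lemma \ref{lem: dimension of intersection of snake paths lpdm}, the $n$-dimensional volumes are additive:
\[ \operatorname{vol}(P(\Delta[S,T])) = \sum_{S \leq R \leq T \setminus \{n\}} \operatorname{vol}(P(\Delta[R,R\cup\{n\}])). \]

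Next, apply Corollary \ref{cor: volume of toric intervals ascents} to each summand: for every $R$ in the index set, $\operatorname{vol}(P(\Delta[R,R\cup\{n\}]))$ equals $\frac{1}{n!}$ times the number of permutations in $S_n$ with ascent set exactly $R$. Summing over all $R$ with $S \leq R \leq T \setminus \{n\}$ and recalling that $m$ is defined as the total number of permutations whose ascent set $R$ lies in this interval, we obtain $\operatorname{vol}(P(\Delta[S,T])) = \frac{m}{n!}$.

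The only subtlety, and hence the main point to verify carefully, is the additivity of volumes across the decomposition. This is essentially automatic from the definition of a polytope decomposition, but it relies on the fact established inside the proof of the theorem (via Proposition \ref{prop: faces of snake path lpdm} and Lemma \ref{lem: dimension of intersection of snake paths lpdm}) that overlaps of distinct toric pieces are genuine common faces of strictly smaller dimension. Once that is in hand, the corollary follows immediately by combining the decomposition with the per-piece volume computation; no further combinatorial identity on permutations is required, because the enumeration of permutations by ascent set is already baked into Corollary \ref{cor: volume of toric intervals ascents}.
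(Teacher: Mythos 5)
Your proposal is correct and follows the same route as the paper: combine the polytope decomposition of Theorem \ref{mainthm: LPDM subdivision into snake paths} with the per-piece volume formula, with Lemma \ref{lem: dimension of intersection of snake paths lpdm} guaranteeing that overlaps are lower-dimensional and hence do not affect the volume sum. The paper cites the maximal-chains form of the volume result (Proposition \ref{cor: volume calculation}) while you cite the ascent-set form (Corollary \ref{cor: volume of toric intervals ascents}), but these are equivalent by the bijection proved in the paper, and your choice matches the phrasing of the corollary more directly.
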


\begin{proof}
    This follows from the combination of Theorem \ref{mainthm: LPDM subdivision into snake paths} and Corollary \ref{cor: volume calculation}.
\end{proof}

\subsection{Relation to Bidkhori's Decomposition}

Our work in this section is a type $C_n$ generalization of the following result of Bidkhori:

\begin{theorem}\cite{Bid12}
    Let $[S,T]$ be subsets of $[n]$ of the same cardinality $k$ such that $P(M[S,T])$ has dimension $n-1$. Then, the lattice path matroid polytope $P(M[S,T])$ has a matroid polytope subdivision into lattice path matroid polytopes corresponding to linked snake paths in the $k \times (n-k)$ box.
\end{theorem}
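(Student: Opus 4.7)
The strategy is to deduce Bidkhori's theorem from the type $C_n$ subdivision (Theorem \ref{mainthm: LPDM subdivision into snake paths}) via the homogeneous component construction, in parallel with how Bidkhori's triangulation was recovered from Theorem \ref{mainthm: snake path triangulation} earlier in this section.

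First, I would realize $M[S, T]$ as the $k$-th homogeneous component of a \emph{linked} symmetric lattice path delta matroid $\Delta[r, s]$, using the construction from the proof of the Bidkhori triangulation theorem: translate the skew-diagram $\lambda(q)/\lambda(p)$ of $[S, T]$ by $(n-k, k)$ into the sub-box $[n-k, n] \times [k, n]$ of the $n \times n$ box so that its northeast corner lands at $(n, n)$, reflect across the anti-diagonal $y = n - x$, and then extend by adding cells below the anti-diagonal so that the symmetric bounding paths $r \leq s$ meet only at $(0, 0)$ and $(n, n)$. The hypothesis $\dim P(M[S, T]) = n - 1$ guarantees that inside the sub-box the bounding paths of $[S, T]$ already touch only at the two corners $(n-k, k)$ and $(n, n)$, so such a linked extension exists. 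By Corollary \ref{cor: Truncations are LPMs}, the resulting delta matroid then satisfies $\Delta[r, s]^k = M[S, T]$.

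Next, I would apply Theorem \ref{mainthm: LPDM subdivision into snake paths} to the linked delta matroid $\Delta[r, s]$ to obtain the decomposition
\[ P(\Delta[r, s]) = \bigcup_{R} P(\Delta[R, R \cup \{n\}]) \]
into polytopes of linked symmetric snake paths, and then intersect both sides with the hyperplane $H_k = \{x \in \RR^n : x_1 + \cdots + x_n = k\}$. By Proposition \ref{prop: truncation hyperplane description}, the left side becomes $P(M[S, T])$, while each piece on the right becomes $P(\Delta[R, R \cup \{n\}]^k)$, which by Corollary \ref{cor: Truncations are LPMs} is the feasible polytope of a lattice path matroid in the $k \times (n-k)$ box whose skew-diagram, being a sub-skew-diagram of that of a symmetric snake path, contains no $2 \times 2$ box and is hence a snake path. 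Retaining only the pieces that remain $(n-1)$-dimensional after the intersection yields the desired subdivision into polytopes of linked snake paths in the $k \times (n-k)$ box.

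The main obstacle I anticipate is verifying that the subdivision property is preserved under intersection with $H_k$: pairwise intersections of the original $n$-dimensional pieces are common faces by the $C_n$ subdivision, but one must check that intersecting with $H_k$ still yields common faces of the surviving full-dimensional pieces, and that pieces becoming lower-dimensional are absorbed as faces of full-dimensional ones rather than creating coverage gaps. This should follow from the edge-direction structure of delta matroid polytopes from Theorem \ref{thm: type B GGMS}, since edges in directions $e_i - e_j$ are parallel to $H_k$ and edges in direction $e_i$ cross $H_k$ only at lattice points and hence at vertices, so that $H_k$ meets the $C_n$ subdivision in a face-to-face way; the delicate bookkeeping step is tracking exactly which pieces survive full-dimensionally.
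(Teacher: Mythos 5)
Your proposal follows essentially the same route as the paper: realize $M[S,T]$ as the $k$-th homogeneous component of a linked symmetric lattice path delta matroid by the translate-reflect-extend construction, apply Theorem \ref{mainthm: LPDM subdivision into snake paths} to subdivide its polytope into linked symmetric snake paths, and intersect the whole picture with the hyperplane $H_k$ so that Proposition \ref{prop: truncation hyperplane description} and Corollary \ref{cor: Truncations are LPMs} transform each piece into a snake path in the $k \times (n-k)$ box. The paper passes over the face-to-face compatibility of the intersected subdivision more quickly than you do, but the edge-direction argument you sketch (via Theorem \ref{thm: type B GGMS}, the same mechanism used in Proposition \ref{prop: truncation hyperplane description}) is exactly what justifies it, so your additional caution is warranted rather than a deviation.
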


In fact, our work recovers her result.

\begin{proof}
    Let $\mu$ be the skew-diagram corresponding to the subsets $S$ and $T$ in the $k \times (n-k)$ box. Place the box so that the southwest corner is at coordinate $(n-k,k)$ and the northeast corner is at $(n,n)$. Reflecting this diagram along the line $y = n- x$ we get a symmetric skew-diagram in the box $[0,1]^n$. This is the skew-diagram of lattice paths $[p',q']$ such that $p'$ and $q'$ meet exactly at $(0,0)$, $(n-k,k)$, and $(n,n)$. Let $[p,q]$ be a linked interval of symmetric lattice paths whose skew-diagram contains the skew-diagram of $[p',q']$.

    By Proposition \ref{prop: truncation in terms of lattice paths}, we see that the $k$th truncation of $\Delta[p,q]$ is the matroid $M[S,T]$. Therefore,
     \[P(\Delta[p,q]) \cap \{ x \in \RR^n \; \lvert \; \sum_{i=1}^{n}x_i = k\} = P(M[S,T]). \]

    Now, let $[A,B]$ be the subsets corresponding to a linked snake path in the skew diagram $\mu$. By reflecting along the line $y = n-x$, we obtain a symmetric snake path. Since $[p,q]$ is linked, there is a symmetric snake path $[S', S' \cup \{n\}]$ whose skew-diagram contains the reflected skew-diagram of $[A,B]$. Therefore, the $k$th homogeneous component of $\Delta[S', S' \cup \{n\}]$ is the matroid $M[A,B]$. 
    
    Note that the homogeneous components of a delta matroid corresponding to a symmetric snake path is the matroid corresponding to a snake path. All of this implies that intersecting the polytope decomposition of $P(\Delta[p,q])$ from Theorem \ref{mainthm: LPDM subdivision into snake paths} with the hyperplane $\{x \in \RR^n \; \vert \; \sum x_i = k\}$ gives a polytope decomposition of $P(M[S,T])$ into the polytopes of snake paths. Further, every snake path appears in this way.
\end{proof}

\bibliographystyle{plain} 
\bibliography{main.bib} 

\end{document}